\newtheorem{theorem}{Theorem}[section]
\newtheorem{proposition}[theorem]{Proposition}
\newtheorem{corollary}[theorem]{Corollary}
\newtheorem{lemma}[theorem]{Lemma}
\newtheorem{remark}[theorem]{Remark}
\newtheorem{example}{Example}
\newtheorem*{example*}{Example}
\def\1{\mathbf 1}
\def\ba{\mathbf a}
\def\bb{\mathbf b}
\def\I{\mathbf I}
\def\N{\mathbf N}
\def\x{\mathbf x}
\def\0{\mathbf 0}
\def\cB{\mathcal B}
\def\cD{\mathcal D}
\def\cG{\mathcal G}
\def\cH{\mathcal H}
\def\cL{\mathcal L}
\def\cP{\mathcal P}
\def\cX{\mathcal X}
\def\bZ{{\mathbb Z}}
\def\PG{{\rm PG}}
\def\GammaL{{\rm \Gamma L}}
\def\GF{{\rm GF}}
\def\GL{{\rm GL}}
\def\Gal{{\rm Gal}}
\def\I{{\rm I}}
\def\Fq{\mathbb F_{q}}
\def\Fqd{\mathbb F_{q^d}}
\def\Fqm{\mathbb F_{q^m}}
\def\Fqn{\mathbb F_{q^{n}}}
\def\Fqn{\mathbb F_{q^n}}
\def\Fqk{\mathbb F_{q^k}}
\def\Fqc{\mathbb F_{q^c}}
\def\Fqr{\mathbb F_{q^r}}
\def\Fp{\mathbb F_{p}}
\def\Aut{{\rm Aut}}
\def\diag{{\rm diag}}
\def\dim{{\rm dim}}
\def\End{{\rm End}}
\def\GF{{\rm GF}}
\def\gcd{{\rm gcd}}
\def\id{{\it id}}
\def\lcm{{\rm lcm}}
\def\Rad{{\rm Rad\,}}
\def\rank{{\rm rank}}
\def\Tr{{\rm Tr}}
\def\N{{\rm N}}
\def\sk{s^{(k)}}
\newcommand\comment[1]{}
\def\<{\langle}
\def\>{\rangle}
\title{Puncturing maximum rank distance codes}
\author{Bence Csajb\'ok \footnote{The first author is supported by the J\'anos Bolyai Research
Scholarship of the Hungarian Academy of Sciences. The first author
acknowledges the support of OTKA Grant No. K 124950.}\\
\small MTA--ELTE Geometric and Algebraic Combinatorics Research Group,
\\[-0.8ex]
\small ELTE E\"otv\"os Lor\'and University, Budapest, Hungary\\[-0.8ex] 
\small Department of Geometry\\[-0.8ex]
\small 1117 Budapest, P\'azm\'any P.\ stny.\ 1/C, Hungary\\
\small {\tt csajbokb@cs.elte.hu}\\
\and
Alessandro Siciliano\\
\small Dipartimento di Matematica, Informatica ed Economia\\[-0.8ex]
\small Universit\`a degli Studi della Basilicata\\[-0.8ex]
\small Potenza, Italy\\
\small{\tt alessandro.siciliano@unibas.it}
}
\begin{document}

%\date{\small{\em{version 3.6}}}
\date{}

\maketitle

\begin{abstract}
We investigate punctured maximum rank distance codes in cyclic models for bilinear forms of finite vector spaces.
In each of these models we consider an infinite family of linear maximum rank distance codes  obtained by puncturing generalized twisted Gabidulin codes. We calculate the automorphism group of such codes and we prove that this family contains many codes which are not equivalent to any generalized Gabidulin code. This solves a problem posed recently by  Sheekey in \cite{she}.

% \PACS{PACS code1 \and PACS code2 \and more}
% \subclass{MSC code1 \and MSC code2 \and more}
\end{abstract}

{\bf Keywords:} Maximum rank distance code, circulant matrix, Singer cycle

\section{Introduction}

Let $M_{m,n}(\Fq)$,  $m\le n$, be the rank metric space  of all the $m\times n$ matrices with entries in the finite field $\Fq$ with $q$ elements, $q=p^h$, $p$ a prime. The {\em distance} between two matrices  by definition  is the rank of their difference.
An  $(m,n,q;s)${\em-rank distance code} (also {\em rank metric code}) is any subset $\cX$ of $M_{m,n}(\Fq)$ such that the  distance between two of its distinct elements is at least  $s$.
An $(m,n,q;s)$-rank distance code is said to be {\em linear} if it is an $\Fq$-linear subspace of $M_{m,n}(\Fq)$.

  It is known \cite{del} that the size of an  $(m,n,q;s)$-rank distance code  $\cX$ is bounded by  the {\em Singleton-like bound}:
\[
|\cX| \le q^{n(m-s+1)}.
\]
When this bound is achieved, $\cX$ is called  an  $(m, n, q; s)${\em -maximum rank distance code}, or $(m, n, q; s)$-{\em MRD code} for short.

Although MRD  codes are very interesting by their own and they caught the attention of many researchers  in recent years \cite{alr,ckww,rav,she}, such codes also have practical applications in error-correction for random network
coding  \cite{gpt,kk,skk}, space-time coding \cite{tsc}  and  cryptography \cite{gpt2,sk}. 

Obviously, investigations of MRD codes can be carried out in any rank metric space isomorphic to $M_{m,n}(\Fq)$.
In his pioneering paper \cite{del}, Ph. Delsarte  constructed linear MRD codes for all the possible values of the parameters $m$, $n$, $q$ and  $s$ by using the framework  of  bilinear forms on two  finite-dimensional vector spaces over a finite  field. Delsarte called such sets {\em Singleton systems} instead of maximum rank distance codes.
Few years later, Gabidulin   \cite{gab} independently  constructed Delsarte's linear MRD codes as evaluation codes of linearized polynomials over a finite field \cite{ln}. Although originally discovered by Delsarte,  these codes are now called {\em Gabidulin  codes}.
In \cite{kg} Gabidulin's   construction was generalized to get different MRD codes. These codes  are now known as {\em Generalized Gabidulin codes}.
For $m=n$  a different construction of Delsarte's MRD codes was given by  Cooperstein \cite{coop}  in the framework of the tensor product of a vector space over $\Fq$ by itself.

Recently, Sheekey \cite{she} presented a new family of linear MRD codes by
using linearized polynomials over $\Fqn$. These codes are now known
as {\em generalized twisted Gabidulin codes}. The equivalence classes of
these codes were determined by Lunardon, Trombetti and Zhou in \cite{ltz}.
In \cite{prns} a further generalization was considered giving new MRD codes
when $m<n$; the authors call these codes generalized twisted Gabidulin
codes as well.
In this paper the term  "generalized twisted Gabidulin code" will be
used for codes defined in \cite[Remark 8]{she}.  For different relations between linear MRD codes and linear sets see \cite{bmpz,lun}, \cite[Section 5]{she}, \cite[Section 5]{cmp}. To the extent of our knowledge, these are  the only infinite families of linear MRD codes with $m<n$ appearing in the literature.

In \cite{ds}  infinite families of non-linear $(n,n,q;n-1)$-MRD codes, for $q\ge 3$ and $n\ge 3$ have been constructed. These families contain the  non-linear MRD  codes provided by   Cossidente, Marino and Pavese in \cite{cmp}. These codes have been   afterwards generalized in \cite{dondur}  by using a more geometric approach. 
A generalization of Sheekey's example which yields additive but not $\Fq$-linear codes can be found in \cite{oo}.

Let $\cX$ be a rank distance code in $M_{n,n}(\Fq)$. For any given $m\times n$ matrix $A$ over $\Fq$ of rank $m<n$, the set  $A\cX=\{AM: M\in \cX \}$ is a rank distance code in $M_{m,n}(\Fq)$. The code $A\cX$  is said to be obtained by {\em puncturing  $\cX$ with $A$} and  $A\cX$ is  called  a {\em punctured code}.   The reason of this definition is that if $A=(\I_m|\0_{n-m})$, where  $I_m$ and $\0_{n-m}$ is the $m\times m$ identity and $m\times(n-m)$ null matrix, respectively,   then the matrices of  $A\cX$ are obtained by deleting the last $n-m$ rows from the matrices in $\cX$. 
%In coding theory, the code  $A\cX$ is  known as  a {\em punctured code}.  
Punctured rank metric codes have been studied before in [3, 26] but
the equivalence problem among these codes have not been dealt with in
these papers.

In \cite[Remark 9]{she} Sheekey posed the following problem:  
\\[.1in]
{\em Are the MRD codes obtained by puncturing generalized twisted Gabidulin codes equivalent to the codes obtained by puncturing generalized Gabidulin codes?}

Here we investigate punctured codes and study the above problem in the framework of bilinear forms. We point out that the very recent preprint \cite{tz} deals with the same problem by using $q$-linearized polynomials. In \cite{tz} the authors  investigate the middle nucleus and the right nucleus of  punctured generalized twisted Gabidulin codes, for $m<n$. By exploiting these nuclei, they derive necessary conditions on the automorphisms of these codes  which depend on certain restrictions for the parameters.

Let $V$ and $V'$ be  two vector spaces over $\Fq$ of dimensions $m$ and $n$, respectively. Since the rank is invariant under matrix transposition, we may assume $m\le n$.

A {\em bilinear form} on $V$ and $V'$ is a function  $f:V\times V'\rightarrow  \Fq$
that satisfies the identity
\[
f\left(\sum_{i}{x_i v_i},\sum_{j}{x'_j v'_j}\right)=\sum_{i,j}{x_if(v_i,v'_j)x_j'},
\]
for all scalars $x_i,x'_j\in \Fq$ and all vectors $v_i\in V$, $v'_j\in V'$.
 The set  $\Omega_{m,n}=\Omega(V,V')$ of all bilinear forms on $V$ and $V'$ is  an  $mn$-dimensional vector space over $\Fq$.
 
The {\em left radical } $\Rad(f)$ of any $f\in\Omega_{m,n}$ is by definition  the subspace of $V$ consisting of all vectors $v$ satisfying $f(v,v')=0$ for every $v'\in V'$. The {\em rank} of $f$ is the codimension of $\Rad(f)$, i.e.
\begin{equation}\label{eq_34}
\rank(f)=m-\dim_{\Fq}(\Rad(f)).
\end{equation}
Then the $\Fq$-vector space $\Omega_{m,n}$ equipped with the above rank function is a rank metric space over $\Fq$.

Let $\{u_0,\ldots, u_{m-1}\}$  and  $\{u'_0,\ldots, u'_{n-1}\}$ be a basis for $V$ and $V'$, respectively. For any  $f\in\Omega_{m,n}$,  the $m\times n$ $\Fq$-matrix   $M_f=(f(u_i,u'_j))$, is called the {\em matrix of} $f$ {\em in the bases} $\{u_0,\ldots, u_{m-1}\}$ {\em and}  $\{u'_0,\ldots, u'_{n-1}\}$. It turns out that  the map
\begin{equation}\label{eq_4}
\begin{array}{rccc}
\nu_{\{u_0,\ldots, u_{m-1};u'_0,\ldots, u'_{n-1}\}}:& \Omega_{m,n} & \rightarrow &  M_{m,n}(\Fq)\\
 & f & \mapsto  & M_f
\end{array}
\end{equation}
 is an isomorphism of rank metric spaces with $\rank(f)=\rank(M_f)$.

{Let  $\GammaL(\Omega_{m,n})$ denote the {\em general semilinear group} of the  $mn$-dimensional $\Fq$-vector space  $\Omega_{m,n}$, that  is, the group of all invertible semilinear transformations of $\Omega_{m,n}$. 
 Let $\{w_1,\ldots,w_{mn}\}$ be a basis for $\Omega_{m,n}$, and recall that $\Aut(\Fq)=\<\phi_p\>$, where $\phi_p:\Fq\rightarrow\Fq$ is the Frobenius map $\lambda\mapsto\lambda^p$. Using $\phi_p$, we define the map $\phi:\Omega_{m,n}\rightarrow\Omega_{m,n}$ by
\[
\phi:\sum_{i}{\lambda_i w_i}\mapsto \sum_{i}{\lambda_i^p w_i}.
\]
Then $\phi$ is an invertible semilinear transformation of $\Omega_{m,n}$, and for $(a_{ij})\in\GL(mn,q)$ we have $(a_{ij})^\phi=(a_{ij}^p)$. Therefore $\phi$ normalizes the general linear group $\GL(mn,q)$ and we have $\GammaL(\Omega_{m,n})=\GL(\Omega_{m,n}) \rtimes \Aut(\Fq)$. 
\\
An {\em automorphism} of the rank metric space $\Omega_{m,n}$ is any transformation $\tau\in\GammaL(\Omega_{m,n})$ such that $\rank(f^\tau)=\rank(f)$, for all $f\in\Omega_{m,n}$.  The {\em automorphism group} $\Aut(\Omega_{m,n})$ of $\Omega_{m,n}$  is the group  of all automorphisms of $\Omega_{m,n}$, i.e.
\[
\Aut(\Omega_{m,n})=\{\tau\in\GammaL(\Omega_{m,n}):\rank(f^\tau)=\rank(f),\ \mathrm{ for \ all\ } f\in\Omega_{m,n}\}.
\]

By \cite[Theorem 3.4]{wan}, 
\[
\mbox{$\Aut(\Omega_{m,n})=(\GL(V)\times\GL(V')) \rtimes\Aut(\Fq)$ for $m<n$},
\]
 and 
 \[
 \mbox{$\Aut(\Omega_{n,n})=(\GL(V')\times\GL(V'))\rtimes \langle \top\rangle \rtimes\Aut(\Fq)$ for $m=n$},
 \]
  where $\top$ is an involutorial operator.
 In details, any given $(g,g')\in\GL(V)\times \GL(V')$ defines the linear automorphism of $\Omega_{m,n}$ given by 
\[
 f^{(g,g')}(v,v')=f(gv,g'v'),
 \]
for any $f\in\Omega_{m,n}$. If $A$ and $B$ are the matrices of $g\in\GL(V)$ and $g'\in\GL(V')$ in the given bases for $V$ and $V'$,  then the matrix of  $f^{(g,g')}$ is $A^tM_fB$, where $t$ denotes transposition. 
Additionally, the  semilinear transformation $\phi$ of $\Omega_{m,n}$ is the automorphism  given by 
\[
f^{\phi}(v,v')=[f(v^{\phi^{-1}},{v'}^{\phi^{-1}})]^p.
\] 
If  $M_f=(a_{ij})$ is the  matrix of $f$ in the given bases for $V$ and $V'$, then the matrix of $f^{\phi}$ is $M_f^{\phi}=(a_{ij}^{p})$. Therefore $\phi$ normalizes the group $\GL(V)\times\GL(V')$. If $m<n$, the above automorphisms are all the elements in  $\Aut(\Omega_{m,n})$. 
\\
If  $m=n$, one may assume, and we do,  $V'=V=\langle  u_0,\ldots, u_{m-1}\rangle$.  The involutorial operator $\top:\Omega_{n,n}\rightarrow \Omega_{n,n}$   is defined by setting 
 \[
 f^\top(v,v')=f(v',v).
 \]
 If  $M_f=(a_{ij})$ is the  matrix of $f$ in the given bases for $V$ and $V'$, then the matrix of $f^{\top}$ is the transpose matrix $M_f^t$ of $M_f$. The operator $\top$ acts on $\GL(V)\times\GL(V)$ by mapping $(g,g')$ to $(g',g)$.

For a given subset $\cX$ of $\Omega_{m,n}$, the {\em automorphism group} of $\cX$  is the subgroup of $\Aut(\Omega_{m,n})$ fixing $\cX$. Two subsets  $\cX_1,\cX_2$ of $\Omega_{m,n}$ are said to be {\em equivalent} if there exists $\varphi\in \Aut(\Omega_{m,n})$ such that $\cX_2=\cX_1^\varphi$.

%The paper is organized as follows.
The main tool we use in this paper is  the $k$-cyclic model in $V(r,q^r)$ for an $r$-dimensional vector space  $V(r,q)$ over $\Fq$, where $k$ is any positive integer such that $\gcd(r,k)=1$. This model generalizes the cyclic model  introduced in \cite{coop,fkmp,h1} and it  is studied in Section \ref{sec_2}.  In particular, the endomorphisms of the  $k$-cyclic model are represented by  $r\times r$ $q^k$-circulant matrices over $\Fqr$. 
\\
For any $k$ such that $\gcd(m,k)=1=\gcd(n,k)$,  the elements of $\Omega_{m,n}$ acting on the $k$-cyclic model of $V$ and $V'$ are represented by $q^k$-circulant $m\times n$ matrices over $\Fqd$, where  $d=\lcm(m,n)$. We then have a description of the elements in $\Aut(\Omega_{m,n})$ in terms of $q^k$-circulant matrices.
\\
In Section \ref{sec_3} we prove that the code obtained by puncturing an $(n,n,q;s)$-MRD code is an $(m,n,q;s+m-n)$-MRD code, where $n-s<m\le n$. In particular, the code in $\Omega_{m,n}$ obtained by puncturing a generalized Gabidulin code in $\Omega_{n,n}$ is a generalized Gabidulin code. Conversely, every  generalized Gabidulin code in $\Omega_{m,n}$ can be obtained by puncturing a  generalized Gabidulin code in $\Omega_{n,n}$.  \\
By using  the representation by $q^k$-circulant matrices of the elements of $\Omega_{m,n}$ acting on the $k$-cyclic model for $V$ and $V'$, we calculate the automorphism group of some generalized Gabidulin code.
In Section \ref{sec_3} we  also construct an infinite family of MRD codes by puncturing generalized twisted Gabidulin codes \cite{she,ltz}. 
 We calculate the automorphism group of these  codes in Section \ref{sec_4}. By using a recent result by Liebhold and  Nebe  \cite{lnebe}, we prove in  Section \ref{sec_5} that  the above family contains many  MRD codes which are inequivalent to the MRD codes obtained by puncturing generalized Gabidulin codes. This solves the problem posed by Sheekey in \cite[Remark 9]{she}.

%In this case,  we   determine  completely the automorphism group of  punctured code obtained from generalized Gabidulin codes  in Section \ref{sec_4}. Finally, in Section  \ref{sec_5} we determine  the automorphism group of some punctured  generalized twisted Gabidulin codes.  By using a recent result by Liebhold and  Nebe  \cite{lnebe}, we can prove that  these MRD codes  are inequivalent to the MRD obtained by puncturing  generalized Gabidulin codes. %
%
%
%%%%%%%%%%%%%%%%%%%%%%%%%%%%%%%%%%%%%%%%%%%%%%%%%%%%%%%%%%%%%%%%%%%%%%%%%
%%%%%%%%%%%%%%%%%%%%%%%%%%%%%%%%%%%%%%%%%%%%%%%%%%%%%%%%%%%%%%%%%%%%%%%%%
%%%%%%%% The cyclic representation of bilinear forms     %%%%%%%%%%%%%%%%
%%%%%%%%         on finite vector spaces                 %%%%%%%%%%%%%%%%
%%%%%%%%%%%%%%%%%%%%%%%%%%%%%%%%%%%%%%%%%%%%%%%%%%%%%%%%%%%%%%%%%%%%%%%%%
%%%%%%%%%%%%%%%%%%%%%%%%%%%%%%%%%%%%%%%%%%%%%%%%%%%%%%%%%%%%%%%%%%%%%%%%%
%%%%%%%%%%%%%%%%%%%%%%%%%%%%%%%%%%%%%%%%%%%%%%%%%%%%%%%%%%%%%%%%%%%%%%%%%
%
%
\section{Cyclic models for bilinear forms on finite vector spaces}\label{sec_2}

Let $V(r,q)=\<u_0,\ldots, u_{r-1}\>_{\Fqr}$, $r\ge 2$, be an $r$-dimensional vector space over the finite field $\Fqr$. We  denote the set of all linear transformations of $V(r,q)$   by $\End(V(r,q))$.

 Embed $V(r,q)$ in $V(r,q^r)$ by extending the scalars. Concretely this can be done by defining $V(r,q^r)=\{\sum_{i=0}^{r-1}{\lambda_i u_i:\lambda_i \in \Fqr}\}$.

Let $\xi:V(r,q^r)\rightarrow V(r,q^r)$ be the $\Fqr$-semilinear transformation with associated automorphism $\delta:x\in \Fqr\rightarrow x^q\in\Fqr$ such that $\xi(u_i)=u_i$. Clearly, $V(r,q)$ consists of all the vectors in $V(r,q^r)$ which are fixed by $\xi$.

In the paper \cite{coop},  the cyclic model of  $V(r,q)$   was introduced by taking the  eigenvectors $s_0,\ldots, s_{r-1}$ in $V(r,q^r)$ of a  Singer cycle $\sigma$ of $V(r,q)$; here a {\em Singer cycle} of $V(r,q)$ is an element $\sigma$ of $\GL(V(r,q))$ of order $q^r-1$. The cyclic group $S=\langle \sigma\rangle $ is called a {\em Singer cyclic group of} $\GL(V(r,q))$.

Since  $s_0,\ldots, s_{r-1}$ have distinct eigenvalues in $\Fqr$, they form a basis of  the extension $V(r,q^r)$ of $V(r,q)$.  

 In this basis the matrix of $\sigma$   is the diagonal matrix $\diag(w,w^q,\ldots,w^{q^{r-1}})$, where $w$ is a primitive element of $\Fqr$ over $\Fq$ and $w^{q^i}$ is the eigenvalue of $s_i$. The action of the  linear part $\ell_{\xi}$ of the $\Fqr$-semilinear transformation $\xi$ is given by $\ell_{\xi}(s_i)=s_{i+1}$,  where the indices are considered modulo $r$ \cite{coop}.   It follows that 
\begin{equation}\label{eq_1_1}
V(r,q)=\left\{\sum_{i=0}^{r-1}{x^{q^{i}}s_i}:x \in \Fqr\right\}.
\end{equation}
%Note that $\langle \ell_\xi\rangle $ normalizes $S$ in $\GL(V(r,q))$ and $S\rtimes \langle \ell_\xi\rangle $ is the normalizer  in $\GL(V(r,q))$ of  $S$ \cite{hu}. 
We call $\{s_0,\ldots, s_{r-1}\}$ a {\em Singer basis} for $V(r,q)$ and the representation (\ref{eq_1_1}) for $V(r,q)$, or equivalently the set $\{(x,x^q,\ldots, x^{q^{r-1}}):x \in\Fqr\}\subset\Fqr^r$, is  the {\em cyclic model for} $V(r,q)$ \cite{fkmp,h1}.

We point out that the $\Fqr$-semilinear transformation $\phi:V(r,q^r)\rightarrow V(r,q^r)$ with associated automorphism the Frobenius map $\phi_p:x\in \Fqr\rightarrow x^p\in\Fqr$ such that $\phi(u_i)=u_i$ acts on the cyclic model (\ref{eq_1_1}) by mapping $xs_0+x^qs_1+...+ x^{q^{r-1}}s_{r-1}$ to $x^{pq^{r-1}}s_0+x^ps_1+...+ x^{pq^{r-2}}s_{r-1}$.

%The lattice of $\Fq$-subspaces spanned by vectors in the cyclic model for $V(r,q)$ is called the {\em cyclic model for  the projective space} $\PG(V,\Fq)$.

%An $r\times r$ $q$-{\em circulant} (or  {\em Dickson } ) {\em matrix} over $\Fqr$ is a matrix of the form
% \[
%D_{(a_0,a_1,\ldots,a_{r-1})}= \begin{pmatrix}
%a_0           & a_1           & \cdots  & a_{r-1}\\
%a_{r-1}^q     & a^q_{0}       & \cdots  & a_{r-2}^q\\
%\vdots        & \vdots        &  \ddots & \vdots\\
%a^{q^{r-1}}_{1} & a^{q^{r-1}}_{2} & \cdots  & a^{q^{r-1}}_0
%\end{pmatrix}
% \]
%with $a_i\in \Fqr$. We say that the above matrix is {\em generated  by the array} $(a_0,\ldots,a_{r-1})$. Let $\cD_r(\Fqr)$ denote the {\em Dickson matrix algebra} formed by all $r\times r$ $q$-circulant matrices over $\Fqr$. The set $\cB_r(\Fqr)$ of all invertible $q$-circulant  $r\times r$ matrices is known as the {\em Betti-Mathieu group} \cite{car}.
%
%

Let $k$ be a positive integer such that $\gcd(k,r)=1$. Set $\sk_i=s_{ki \bmod r}$, for $i=0,\ldots, r-1$. For brevity, we use $[j]=q^{j}$ and $a^{[j]}=a^{q^{j}}$, for any $a\in\Fqr$. It is clear that the exponent $j$ is taken mod $r$ because of the field size. Then we may write
\begin{equation}\label{eq_16}
V(r,q)=\left\{\sum_{i=0}^{r-1}{x^{[ki]}\sk_i}:x \in \Fqr\right\}.
\end{equation}

%It is easily seen that  $V(r,q)$ as given in (\ref{eq_16})  is pointwise fixed by the semilinear transformation $\xi^k$ defined by
%\begin{equation}\label{eq_1}
%\xi^k(\sk_i)=\sk_{i+1}, \ \ 0\le i< r-1, \ \  \xi(\sk_{r-1})=\sk_{0}
%\end{equation}
%with companion automorphism $\delta^k:x\in\Fqr\mapsto x^{q^k}\in\Fqr$.

We call the representation (\ref{eq_16}) for $V(r,q)$, or equivalently the set $\{(x,x^{[k]},\ldots, x^{[k(r-1)]}):x \in\Fqr\}\subset\Fqr^r$,    the $k${\em-cyclic model for} $V(r,q)$.  %The lattice of $\Fq$-subspaces spanned by vectors in  (\ref{eq_16}) is called the $k${\em-th cyclic model for  the projective space} $\PG(V,\Fq)$.

 It is easily seen that the linear part of the semilinear transformation $\xi^k$ acts on the $k$-th cyclic model for $V(r,q)$ by mapping $s_i^{(k)}$ to $s_{i+1}^{(k)}$, with indices considered modulo $r$.

  An $r\times r$ $q^k$-{\em circulant}  {\em matrix} over $\Fqr$ is a matrix of the form
 \[
D_{(a_0,a_1,\ldots,a_{r-1})}^{(k)}= \left(\begin{array}{cccc}
a_0     & a_1           & \cdots  & a_{r-1}\\
a_{r-1}^{[k]}     & a_{0}^{[k]}       & \cdots  & a_{r-2}^{[k]}\\
\vdots        & \vdots        &  \ddots & \vdots\\
a_{1}^{[k(r-1)]} & a_{2}^{[k(r-1)]} & \cdots  & a_0^{[k(r-1)]}
\end{array}\right)
 \]
with $a_i\in \Fqr$. We say that the above matrix is {\em generated  by the array} $(a_0,\ldots,a_{r-1})$.

Let $\cD_r^{(k)}(\Fqr)$ denote the matrix algebra formed by all $r\times r$ $q^k$-circulant matrices over $\Fqr$ and  $\cB_r^{(k)}(\Fqr)$ the set  of all invertible $q^k$-circulant  $r\times r$ matrices.
When $k=1$, an $r\times r$ $q$-circulant matrix  over $\Fqr$ is also known as a {\em Dickson matrix}, $\cD_r(\Fqr)=\cD_r^{(1)}(\Fqr)$ is the {\em Dickson matrix algebra}  and  $\cB_r(\Fqr)=\cB_r^{(1)}(\Fqr)$ is the {\em Betti-Mathieu group} \cite{bott,car}.
It is known that $\End(V(r,q))\simeq\cD_r(\Fqr)$ and $\cB_r(\Fqr)\simeq\GL(V(r,q))$ 	\cite{ln,wl}.

\begin{remark}\label{rem_6}
{\em 
In terms of matrix representation, the above isomorphisms  are described as follows.
Let $V(r,q)=\<u_0,\ldots, u_{r-1}\>_{\Fq}$ and $\{s_0,\ldots, s_{r-1}\}$ a Singer basis for $V(r,q)$ defined by the primitive element $w$ of $\Fqr$ over $\Fq$. Up to a change of the  basis $\{u_0,\ldots, u_{r-1}\}$ in $V(r,q)$,  we may assume
\[
u_i=w^{i}s_0+\ldots+w^{iq^{r-1}}s_{r-1}, \ \mathrm{for\ }i=0,\ldots, r-1.
\]
Notice that $u_i\in\ V(r,q)$, for  $i=0,\ldots, r-1$. The non-singular Moore matrix
\begin{equation}\label{eq_24}
E_{r}=\left(\begin{array}{cccc}
1 & w & \cdots & w^{r-1}\\
1 & w^q & \cdots & w^{(r-1)q}\\
\vdots  &   \vdots & & \vdots\\
1 & w^{q^{r-1}} & \cdots & w^{(r-1)q^{r-1}}
\end{array}\right)
\end{equation}
 is the matrix of the change of basis from $\{u_0,\ldots, u_{r-1}\}$ to $\{s_0,\ldots, s_{r-1}\}$. Therefore, the matrix map $D\in\cD_{r}(\Fqr)\rightarrow E_r^{-1}DE_r\in M_{r,r}(\Fq)$ realizes the above isomorphism.
}
\end{remark}

\begin{proposition}\label{prop_3}
$\End(V(r,q))\simeq\cD_r^{(k)}(\Fqr)$ and $\GL(V(r,q))\simeq \cB_r^{(k)}(\Fqr)$.
\end{proposition}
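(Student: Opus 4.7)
The plan is to extend endomorphisms to the algebraic closure and then translate the condition of being defined over $\Fq$ into a recursion on matrix entries that forces the $q^k$-circulant shape. Concretely, any $T\in\End(V(r,q))$ has a unique $\Fqr$-linear extension $\tilde T\in\End_{\Fqr}(V(r,q^r))$, obtained by letting it act on coordinates with respect to $\{u_0,\ldots,u_{r-1}\}$; this extension map is an injective ring homomorphism. Since $V(r,q)$ is exactly the fixed space of $\xi$, the image consists of the $\tilde T$ that stabilize $V(r,q)$, which (as $\xi$ is $\Fqr$-semilinear) is equivalent to the commutation $\xi\tilde T=\tilde T\xi$ on all of $V(r,q^r)$.

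The first key observation is that because $\xi$ has order $r$ and $\gcd(k,r)=1$, we have $\langle\xi\rangle=\langle\xi^k\rangle$, so the commutation with $\xi$ is equivalent to commutation with $\xi^k$. The payoff is that the linear part of $\xi^k$ has been shown in Section~\ref{sec_2} to act on the $k$-cyclic basis as the simple cyclic shift $s_i^{(k)}\mapsto s_{i+1}^{(k)}$, with companion automorphism $\lambda\mapsto\lambda^{[k]}$. Writing $\tilde T(s_j^{(k)})=\sum_i c_{ij}s_i^{(k)}$ with $c_{ij}\in\Fqr$ and computing both sides of $\xi^k\tilde T(s_j^{(k)})=\tilde T(\xi^k(s_j^{(k)}))$ gives the recursion
\[
c_{i+1,\,j+1}=c_{ij}^{[k]}\qquad (i,j\in\bZ/r\bZ).
\]

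Iterating this recursion along the diagonal $i\mapsto i+1$ yields $c_{ij}=c_{0,\,j-i}^{[ki]}$, so the matrix $C=(c_{ij})$ is exactly the $q^k$-circulant matrix $D^{(k)}_{(a_0,\ldots,a_{r-1})}$ with $a_t:=c_{0,t}$. Conversely, a direct verification shows that every such $q^k$-circulant matrix satisfies the recursion, hence defines an extension of some $T\in\End(V(r,q))$; the assignment $T\mapsto C$ is an $\Fq$-algebra isomorphism onto $\cD_r^{(k)}(\Fqr)$ because it is literally the coordinate representation in a fixed basis, so compatibility with sums, products, and the identity is automatic. Restricting to invertible endomorphisms then gives the group isomorphism $\GL(V(r,q))\simeq\cB_r^{(k)}(\Fqr)$.

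The only delicate point is the equivalence ``preserves $V(r,q)\Longleftrightarrow$ commutes with $\xi^k$,'' which rests on $\gcd(k,r)=1$; after that the argument is purely index bookkeeping and requires no new content beyond what is already used in the classical $k=1$ case recalled in Remark~\ref{rem_6}. No step looks like a serious obstacle.
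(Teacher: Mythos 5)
Your proof is correct, but it takes a genuinely different route from the paper's. The paper argues in the opposite direction: it starts from an arbitrary $q^k$-circulant matrix, verifies directly that it preserves the $k$-cyclic model and hence induces an endomorphism of $V(r,q)$, shows distinct arrays give distinct endomorphisms via a degree argument on $q$-polynomials ($q^r$ roots force degree $\geq q^r$), and then concludes surjectivity by the cardinality count $q^{r^2}=|\End(V(r,q))|$. You instead go from endomorphisms to matrices: extend $T$ uniquely to an $\Fqr$-linear $\tilde T$ on $V(r,q^r)$, observe that stabilizing the fixed space $V(r,q)$ of $\xi$ is equivalent to commuting with $\xi$ (two $\delta$-semilinear maps agreeing on an $\Fqr$-spanning set coincide), use $\gcd(k,r)=1$ to replace $\xi$ by $\xi^k$, and then read off the recursion $c_{i+1,j+1}=c_{ij}^{[k]}$ directly from the shift-with-Frobenius action of $\xi^k$ on the $k$-cyclic basis. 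Your approach buys a conceptual explanation — $\cD_r^{(k)}(\Fqr)$ appears as the centralizer of $\xi^k$ in $\End_{\Fqr}(V(r,q^r))$, a Galois-descent viewpoint — and yields the algebra-isomorphism structure for free from the coordinate map, rather than deducing a bijection by counting; the paper's approach is shorter and more elementary, at the cost of packaging the surjectivity into a cardinality comparison. One small nit: the equivalence ``preserves $V(r,q)\Leftrightarrow$ commutes with $\xi$'' does not itself need $\gcd(k,r)=1$; that hypothesis enters only when passing from $\xi$ to $\xi^k$ (and, as in the paper, in making $\{s_0^{(k)},\ldots,s_{r-1}^{(k)}\}$ a basis).
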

\begin{proof}
For any $\ba=(a_0,\ldots, a_{r-1})$ over  $\Fqr$, the $q^k$-circulant matrix  $D_{\ba}^{(k)}$ acts on the  $k$-th cyclic model \eqref{eq_16} for $V(r,q)$ by mapping $(x,x^{[k]},\ldots, x^{[k(r-1)]})$ to $(a_0x+a_1x^{[k]}+\ldots+ a_{r-1}x^{[k(r-1)]},a_{r-1}^{[k]}x+a_0^{[k]}x^{[k]}+\ldots+ a_{r-2}^{[k]}x^{[k(r-1)]},\ldots, a_{1}^{[k(r-1)]}x+a_2^{[k(r-1)]}x^{[k]}+\ldots+ a_{0}^{[k(r-1)]}x^{[k(r-1)]})$, giving $D_{\ba}^{(k)}$ is an endomorphism of \eqref{eq_16}. Let $D_\ba, D_{\ba'}\in\cD_r^{(k)}(\Fqr)$ such that $D_\ba\x^t= D_{\ba'}\x^t$, for every $\x=(x,x^{[k]},\ldots, x^{[k(r-1)]})$, $x \in \Fqr$. Hence, $(a_0-a'_0)x+(a_1-a'_1)x^{[k]}+\ldots+ (a_{r-1}-a'_{r-1})x^{[k(r-1)]}=0$, for all $x\in \Fqr$. As the left hand side is a polynomial of degree at most $q^{r-1}$ with $q^r$ roots, we get $\ba=\ba'$. Therefore,  matrices in   $\cD_r^{(k)}(\Fqr)$ represent $q^{r^2}$ distinct endomorphisms of the  $k$-th cyclic model  for $V(r,q)$.  As $q^{r^2}=|\End(V(r,q))|$, we get the result.
 % \qed
\end{proof} 
%
%\begin{remark}
%Let $\cL_^{(k)}$From the proof of the previous Proposition we see that the map 
%\end{remark}
%
\begin{remark}\label{rem_7}
{\em
  Let $K_r$ be the (permutation) matrix  of the change of basis from  $\{s_0^{(k)},\ldots,{s_{r-1}}^{(k)}\}$ to $\{s_0,\ldots, s_{r-1}\}$. As $\sk_i=s_{ik \bmod r}$, for $i=0,\ldots, r-1$, then the $i$-th column of $K_r$ is the  array $(0,\ldots,0,1,0,\ldots,0)^t$ where 1 is in position $ik \bmod r$, for $i=0,\ldots,r-1$. If $\tau\in\End(V(r,q))$ has $q^k$-circulant matrix $D_{(a_0,a_1,\ldots,a_{r-1})}^{(k)}$ in the basis  $\{s_0^{(k)},\ldots,{s_{r-1}}^{(k)}\}$, then the matrix of $\tau$ in the Singer basis $\{s_0,\ldots,s_{r-1}\}$ is the $q$-circulant matrix  $D_{(b_0,\ldots,b_{r-1})}=K_rD_{(a_0,a_1,\ldots,a_{r-1})}^{(k)}K_r^{-1}$, for some array $(b_0,\ldots,b_{r-1})$ over $\Fqr$.
 Since $\gcd(k,r)=1$, we can write $1=lr+hk$, for some integers $l,h$, giving
 \[
 b_i=a_{ih \bmod r}, \ \ \ \, \mathrm{for \ } i=0,\ldots, r-1.
 \]
 Therefore, $\cD_r^{(k)}(\Fqr)=K_r^{-1}\cD_r(\Fqr)K_r$ and $\cB_r^{(k)}(\Fqr)=K_r^{-1}\cB_r(\Fqr)K_r$.
}
 \end{remark}
 \begin{remark}\label{rem_9}
 {\em We explicitly describe the action of $\Aut(\Fq)$ on $V(r,q^r)$ in the Singer basis $\{s_0^{(k)},\ldots,s_{r-1}^{(k)}\}$. By Remark \ref{rem_6} the invertible semilinear transformation $\phi$ of $V(r,q^r)$ defined by the Frobenius map $\phi_p:x\in \Fqr\rightarrow x^p\in\Fqr$ acts in the basis $\{s_0,\ldots,s_{r-1}\}$ via the pair $(E_r(E_r^{-1})^p;\phi_p)$, where $E_r$ is the non-singular Moore matrix (\ref{eq_24}) and $(E_r^{-1})^p$ is the matrix obtained by $E_r^{-1}$ by applying $\phi_p$ to every entry. By Remark \ref{rem_7} $\phi$  acts in the basis $\{s_0^{(k)},\ldots,s_{r-1}^{(k)}\}$ via the pair $(K_r^{-1}E_r(E_r^{-1})^pK_r;\phi_p)$, since $K_r^p=K_r$.}
 \end{remark}

Let $V=\langle  u_0,\ldots, u_{m-1}\rangle_{\Fq}$ and $V'=\langle u'_0,\ldots, u'_{n-1}\rangle_{\Fq}$, with $m\le n$. If $m=n$ we take $V'=V=\langle  u_0,\ldots, u_{m-1}\rangle_{\Fq}$.  Let   $\sigma$  and $\sigma'$ be  Singer cycles of $\GL(V)$ and $\GL(V')$, respectively,  with  associated semilinear transformations $\xi$ and $\xi'$. Let $\{s_0,\ldots,s_{m-1}\}$  and $\{s'_0,\ldots,s'_{n-1}\}$ be a Singer basis for $V$  and $V'$,   defined by $\sigma$ and $\sigma'$, respectively.  For any given positive integer $k$ such that $\gcd(k,n)=\gcd(k,m)=1$, let $\{\sk_0,\ldots,\sk_{m-1}\}$ and $\{s_0'^{(k)},\ldots,s_{n-1}'^{(k)}\}$ be the  bases of $V(m,q^m)$ and $V(n,q^n)$ defined as above. Therefore, we may consider $\Omega_{m,n}$ as the set of all bilinear forms acting on the $k$-th cyclic model for $V$ and $V'$.  In addition, any element in $\GL(V)\times\GL(V')$   is represented by a pair $(A,B)\in\cB_m^{(k)}(\Fqm)\times \cB_{n}^{(k)}(\Fqn)$.

 Set  $e=\gcd(m,n)$ and  $d=\lcm(m,n)$,  the greatest  common divisor and  the least common multiple  of $m$ and $n$, respectively. % and write $m=en$, $n=en'$.

Let  $\Tr_{q^d/q}$ denote  the trace function from $\Fqd$ onto $\Fq$:
\[
\Tr_{q^d/q}:y  \in \Fqd \rightarrow \Tr_{q^d/q}(y)=\sum_{i=0}^{d-1}{y^{q^i}}\in \Fq.
\]

Since $\gcd(k,d)=1$,  we may write $\Tr_{q^d/q}$ as
\[
T^{(k)}:y  \in \Fqd \rightarrow T^{(k)}(y)=\sum_{i=0}^{d-1}{y^{[k]}}\in \Fq.
\]

 For $0\le j\le e-1$ and a given $a\in\Fqd$ and $v=x\sk_0+\ldots+x^{[k(m-1)]}\sk_{m-1}\in V$ and $v'=x's_0'^{(k)}+\ldots+{x'}^{[k(n-1)]}s_{n-1}'^{(k)}$, the map
 \begin{equation}\label{eq_6}
f_{a,j}^{(k)}(v,v')=T^{(k)}(a x x'^{[kj]})
\end{equation}
 is a bilinear form on the $k$-cyclic model for $V$ and  $V'$. We set
\begin{equation}\label{eq_28}
\Omega_{j}^{(k)}=\{f_{a,j}^{(k)}:a\in\Fqd\}, \ \ \ \mathrm{ for\ } 0\le j \le e-1.
\end{equation}
The following result gives the decomposition of $\Omega_{m,n}$ as sum of the subspaces $\Omega_{j}^{(k)}$.
\begin{theorem}
\begin{equation}\label{eq_7}
\Omega_{m,n}=\bigoplus_{j=0}^{e-1}{\Omega_{j}^{(k)}}.
\end{equation}
\end{theorem}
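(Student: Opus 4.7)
The natural strategy is to introduce the $\Fq$-linear map
\[
\Phi:(\Fqd)^{e}\longrightarrow \Omega_{m,n},\qquad (a_0,\ldots,a_{e-1})\longmapsto \sum_{j=0}^{e-1} f_{a_j,j}^{(k)},
\]
and show it is a bijection. Its image is precisely $\sum_j \Omega_j^{(k)}$, while the domain has $\Fq$-dimension $ed=\gcd(m,n)\cdot\lcm(m,n)=mn=\dim_{\Fq}\Omega_{m,n}$, so injectivity will suffice. The decomposition (\ref{eq_7}) follows at once: each $\Omega_j^{(k)}$ is the image of the $j$-th factor of $(\Fqd)^{e}$ (hence an $\Fq$-subspace of dimension $d$), and injectivity of $\Phi$ packages the condition that the sum is direct.

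To prove injectivity, assume $\sum_{j=0}^{e-1} f_{a_j,j}^{(k)}\equiv 0$. Since $\gcd(k,d)=1$, the map $i\mapsto ki\bmod d$ permutes $\{0,\ldots,d-1\}$ and $T^{(k)}=\Tr_{q^d/q}$; expanding the trace and re-indexing the summation variable gives
\[
\sum_{j=0}^{e-1}\sum_{i=0}^{d-1}a_j^{[i]}\,x^{[i]}\,x'^{[\,i+kj\,]}=0\qquad\text{for all }x\in\Fqm,\ x'\in\Fqn.
\]
Using $x^{[m]}=x$ and $x'^{[n]}=x'$ the exponents may be reduced mod $m$ and $n$, so the left-hand side is controlled by the index map $\Psi:(i,j)\mapsto(i\bmod m,\,(i+kj)\bmod n)$ from $\{0,\ldots,d-1\}\times\{0,\ldots,e-1\}$ to $\{0,\ldots,m-1\}\times\{0,\ldots,n-1\}$, which I claim is a bijection. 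Both sides have cardinality $de=mn$; injectivity is a short congruence check: if $(i,j)$ and $(i',j')$ share an image, then $i-i'=sm$ with $|s|<n/e$ and $k(j-j')\equiv-sm\equiv 0\pmod e$, which forces $j=j'$ (since $\gcd(k,e)=1$, because $e\mid n$, and $|j-j'|<e$); substituting back, $sm\equiv 0\pmod n$ together with $\gcd(m/e,n/e)=1$ and $|s|<n/e$ forces $s=0$, hence $i=i'$.

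Granting bijectivity of $\Psi$, the identity rewrites as
\[
\sum_{P=0}^{m-1}\sum_{Q=0}^{n-1}A_{P,Q}\,x^{[P]}\,x'^{[Q]}=0,\qquad A_{P,Q}=a_{j(P,Q)}^{[i(P,Q)]}\in\Fqd,
\]
one term per pair $(P,Q)$. For each fixed $x'\in\Fqn$ this is a linearized polynomial in $x$ of $q$-degree less than $q^{m}$ vanishing on the $q^{m}$-element set $\Fqm$, so every coefficient $\sum_{Q}A_{P,Q}\,x'^{[Q]}$ vanishes identically for $x'\in\Fqn$; applying the same argument in $x'$ forces $A_{P,Q}=0$ for all $(P,Q)$. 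Since Frobenius is injective on $\Fqd$, this gives $a_j=0$ for each $j$, so $\Phi$ is injective and (\ref{eq_7}) is established.

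The only step requiring real care is the bijectivity of the index map $\Psi$; everything else reduces to a dimension count together with the standard uniqueness result for linearized polynomials of bounded $q$-degree.
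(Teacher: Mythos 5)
Your proof is correct. You set up the $\Fq$-linear evaluation map $\Phi:(\Fqd)^e\to\Omega_{m,n}$, reduce directness and surjectivity to injectivity via the dimension count $ed=mn$, and then prove injectivity by expanding the trace, reducing exponents modulo $m$ and $n$, and showing that the index map $\Psi:(i,j)\mapsto(i\bmod m,\ (i+kj)\bmod n)$ is a bijection — your congruence argument for that (first forcing $j=j'$ using $\gcd(k,e)=1$, then forcing $s=0$ using $\gcd(m/e,n/e)=1$ and $|s|<n/e$) is sound. The final step, killing the coefficients $A_{P,Q}$ via the uniqueness of $q$-linearized polynomials of bounded degree in $x$ and then in $x'$, is the standard and correct conclusion.

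Your route is morally the same as the paper's but packaged differently, and in a couple of respects is tighter. The paper (for $k=1$) writes each $f_{a,j}$ as an $m\times n$ matrix $D_\ba$ over $\Fqd$ in Singer bases and observes that the nonzero entries of these matrices for different $j$ occupy disjoint positions; that disjointness of support \emph{is} your bijection $\Psi$ in matrix clothing. However, the paper only states that the $\Omega_j$ ``intersect trivially'' and then invokes dimensions — pairwise trivial intersection alone is not enough for a direct sum of $e>2$ subspaces, so what actually makes the paper's argument work is the disjoint-support property, which you isolate and prove explicitly. You also dispense with the paper's two-step reduction (first $k=1$, then conjugating by the permutation matrix $K_r$ of Remark~\ref{rem_7} for general $k$): your argument handles all $k$ with $\gcd(k,m)=\gcd(k,n)=1$ uniformly, at the modest cost of one elementary congruence computation. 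In short, same key combinatorial fact, but your presentation is coordinate-free, more self-contained, and a bit more rigorous on the direct-sum claim.
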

\begin{proof} 
 Let first assume $k=1$. For any $e$-tuple  $\ba=(a_0,\ldots,a_{e-1})$ over $\Fqd$ we define an $m\times n$ matrix $D_{\ba}=D_{\ba}^{(1)}=(d_{i,j})$ over $\Fqd$ as follows.
We will use indices from 0 for both rows and columns of $D$. Let $d_{0,j}=a_j$, for $0\le j\le e-1$, and let $d_{i,j}=d_{i-1,j-1}$, where the row index is taken  modulo $m$ and the column index is  taken modulo $n$. Notice that the above rule determines every entry of $D_{\ba}$. In fact, $d_{i,j}=a_l^{q^s}$, where  $l\equiv j-iv\pmod e$, $0\le l\le e-1$ and $s=\beta m+i$, where $\beta$ is the unique integer in $\{0,1,\ldots,n/e-1\}$ such that $j-i \equiv l+\beta m \pmod n$.
\\
Now let $f_{a,j}\in\Omega_{j}$. Then the matrix of $f_{a,j}$ in the Singer bases $\{s_0,\ldots, s_{m-1}\}$ and $\{s_0',\ldots, s_{n-1}'\}$ is the matrix obtained by applying the above construction to the array $\ba=(0,\ldots,0,a,0,\ldots,0)$, with $a$ in the $j$-th position. It is now easy to see that the $\Fq$-spaces $\Omega_{j}$, for $j=0,\ldots,e-1$ intersect trivially. 
By  consideration on dimensions  we may write $\Omega_{m,n}=\bigoplus_{j=0}^{e-1}{\Omega_{j}}$.\\
The $k$-cyclic model for $V'$ and $V$ is obtained from the 1-cyclic model by applying the  changing of basis described in Remark  \ref{rem_7}. Therefore the $\Fq$-spaces $\Omega_{j}^{(k)}$, $k>1$, are pairwise skew and $\Omega_{m,n}=\bigoplus_{j=0}^{e-1}{\Omega_{j}^{(k)}}$.
 \end{proof}

\begin{example}
{\em
Let $m=2$, $n=6$ and $k=1$, so that $d=6$ and $e=2$. For any array $\ba=(a_0,a_1)$ over $\mathbb F_{q^{6}}$, we have
\[
D_\ba=\left(\begin{array}{cccccc}
a_0&a_1&a_0^{q^2}&a_1^{q^2}&a_0^{q^4}&a_1^{q^4}\\
a_1^{q^5}&a_0^q&a_1^q&a_0^{q^3}&a_1^{q^3}&a_0^{q^5}
\end{array}\right).\]
}
\end{example}
\begin{example}
{\em 
Let $m=4$, $n=6$ and $k=5$, so that $d=12$ and $e=2$. For any array $\ba=(a_0,a_1)$ over $\mathbb F_{q^{12}}$, we have
\[
D_{\ba}^{(k)}=\left(\begin{array}{cccccc}
a_0&a_1&a_0^{[8k]}&a_1^{[8k]}&a_0^{[4k]}&a_1^{[4k]}\\
a_1^{[5k]}&a_0^{[k]}&a_1^{[k]}&a_0^{[9k]}&a_1^{[9k]}&a_0^{[5k]}\\
a_0^{[6k]}&a_1^{[6k]}&a_0^{[2k]}&a_1^{[2k]}&a_0^{[10k]}&a_1^{[10k]}\\
a_1^{[11k]}&a_0^{[7k]}&a_1^{[7k]}&a_0^{[3k]}&a_1^{[3k]}&a_0^{[11k]}
\end{array}\right)=
\left(\begin{array}{cccccc}
a_0&a_1&a_0^{q^{4}}&a_1^{q^4}&a_0^{q^8}&a_1^{q^8}\\
a_1^{q}&a_0^{q^5}&a_1^{q^5}&a_0^{q^9}&a_1^{q^9}&a_0^q\\
a_0^{q^6}&a_1^{q^6}&a_0^{q^{10}}& a_1^{q^{10}}&a_0^{q^2}&a_1^{q^2} \\
a_1^{q^7}&a_0^{q^{11}}&a_1^{q^{11}}&a_0^{q^3}&a_1^{q^3}&a_0^{q^{7}}
\end{array}\right).\]
}
\end{example}

We call a matrix of type $D_{\ba}^{(k)}$  an $m\times n$ $q^k${\em-circulant}   {\em matrix}  over $\Fqd$, where $d=\lcm(m,n)$.  We  say that $D_\ba^{(k)}$ is {\em generated  by the array} $\ba=(a_0,a_1,\ldots,a_{e-1})$, where $e=\gcd(m,n)$. We will  denote  the set of all $m\times n$ $q^k$-circulant matrices over $\Fqd$ by $\cD_{m,n}^{(k)}(\Fqd)$.

The next result gives a description of $\Omega_{m,n}$ and $\Aut(\Omega_{m,n})$ in terms of $q^k$-circulant matrices.

\begin{proposition}\label{prop_8}
Let $m\le n$. Then $\Omega_{m,n}\simeq \cD_{m,n}^{(k)}(\Fqd)$.

If $m<n$, then
\[
\Aut(\Omega_{m,n})\simeq (\cB_m^{(k)}(\Fqm)\times\cB_n^{(k)}(\Fqn))\rtimes\Aut(\Fq);
\]
if $m=n$, then
\[
\Aut(\Omega_{n,n})\simeq (\cB_n^{(k)}(\Fqm)\times\cB_n^{(k)}(\Fqn))\rtimes\langle \top\rangle \rtimes\Aut(\Fq).
\]
\end{proposition}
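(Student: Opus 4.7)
The plan is to exhibit the first isomorphism directly from the decomposition in the preceding theorem, and then deduce the automorphism group description by combining Wan's theorem with Proposition~\ref{prop_3}.

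For the first part, I would argue as follows. The proof of the theorem already shows that the matrix of $f_{a,j}^{(k)}$ in the bases $\{s_0^{(k)},\ldots,s_{m-1}^{(k)}\}$ and $\{s_0'^{(k)},\ldots,s_{n-1}'^{(k)}\}$ is precisely the $q^k$-circulant matrix $D_{(0,\ldots,0,a,0,\ldots,0)}^{(k)}\in\cD_{m,n}^{(k)}(\Fqd)$ with $a$ in the $j$-th position. By $\Fq$-linearity, the assignment
\[
\sum_{j=0}^{e-1} f_{a_j,j}^{(k)} \longmapsto D_{(a_0,\ldots,a_{e-1})}^{(k)}
\]
therefore agrees with the evaluation map $\nu$ introduced in \eqref{eq_4} (up to the identification of the bases). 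Injectivity is the direct sum statement \eqref{eq_7}, and a dimension count gives surjectivity: $\dim_{\Fq}\Omega_{m,n}=mn$, while $\cD_{m,n}^{(k)}(\Fqd)$ is parametrised by $e$ free entries in $\Fqd$, hence has $\Fq$-dimension $ed=\lcm(m,n)\gcd(m,n)=mn$. Finally, rank is preserved: passing from $\{s_0^{(k)},\ldots\}$ to the original $\Fq$-basis $\{u_0,\ldots,u_{m-1}\}$ (and analogously for $V'$) is conjugation by the invertible change-of-basis matrix $E_m^{-1}K_m$ (and $E_n^{-1}K_n$ on the right) built from Remarks~\ref{rem_6} and \ref{rem_7}, so the $\Fqd$-rank of $D_{\ba}^{(k)}$ equals the $\Fq$-rank of the ordinary matrix of $f$, which is $\rank(f)$.

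For the automorphism part, I would invoke Wan's theorem already quoted in the introduction, which identifies $\Aut(\Omega_{m,n})$ with $(\GL(V)\times\GL(V'))\rtimes\Aut(\Fq)$ when $m<n$, and with $(\GL(V)\times\GL(V))\rtimes\langle\top\rangle\rtimes\Aut(\Fq)$ when $m=n$. Proposition~\ref{prop_3} gives $\GL(V)\simeq\cB_m^{(k)}(\Fqm)$ and $\GL(V')\simeq\cB_n^{(k)}(\Fqn)$ through the very bases in which $\Omega_{m,n}$ is now identified with $\cD_{m,n}^{(k)}(\Fqd)$, so the direct product of general linear groups translates to $\cB_m^{(k)}(\Fqm)\times\cB_n^{(k)}(\Fqn)$. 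It then remains to check that the normalising actions lift correctly. The key verification is that the Frobenius $\phi$ still normalises $\cB_m^{(k)}(\Fqm)\times\cB_n^{(k)}(\Fqn)$: this follows from Remark~\ref{rem_9}, since applying $\phi_p$ entry-wise to a $q^k$-circulant matrix produces another $q^k$-circulant matrix (the exponent pattern $[kj]$ commutes with $\phi_p$ because $\phi_p$ is a field automorphism of $\Fqd$). For $m=n$, the involution $\top$ corresponds to ordinary transposition of circulant matrices, which swaps the left and right factors in the product $\cB_n^{(k)}(\Fqn)\times\cB_n^{(k)}(\Fqn)$; since the transpose of a $q^k$-circulant matrix is again a $q^k$-circulant matrix of the same shape, the semidirect structure is preserved.

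The main subtle point I expect is the rank-preservation claim in the first step: one must be careful that the matrix of $f$ attached by $\nu$ to the $k$-cyclic bases of the extensions $V(m,q^m)$ and $V(n,q^n)$ really computes the $\Fq$-rank of $f$ on $V\times V'$, since the Singer bases live in the extended vector spaces. This is settled by explicitly writing the $\Fqd$-matrix as the conjugate of the $\Fq$-matrix by the (extended) Moore-matrix products $E_m^{-1}K_m$ and $E_n^{-1}K_n$ of Remarks~\ref{rem_6} and \ref{rem_7}, after which the rest of the argument is a transcription through the isomorphisms. Once this is clear, everything else is bookkeeping with the direct-sum decomposition and the already-available group isomorphisms.
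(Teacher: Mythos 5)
Your proposal is correct and follows essentially the same path as the paper's proof: identify $f_\ba^{(k)} = \sum_j f_{a_j,j}^{(k)}$ with the $q^k$-circulant matrix $D_\ba^{(k)}$ in the Singer bases, establish bijectivity, note rank-preservation via the change-of-basis matrices $E_m^{-1}K_m$ and $E_n^{-1}K_n$ from Remarks~\ref{rem_6} and~\ref{rem_7}, and then transport Wan's description of $\Aut(\Omega_{m,n})$ through Proposition~\ref{prop_3} and Remark~\ref{rem_9}. The only presentational difference is that the paper proves injectivity directly (assume $f_\ba^{(k)}\equiv 0$, conjugate by the nonsingular change-of-basis matrices to conclude $D_\ba^{(k)}=0$ and hence $\ba=0$) rather than citing the direct-sum decomposition~\eqref{eq_7}; your appeal to~\eqref{eq_7} for injectivity is sound only because the dimension count in the preceding theorem already forces each component map $a\mapsto f_{a,j}^{(k)}$ to be injective, so you should make that dependence explicit if you go that route.
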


\begin{proof}
For any $\ba=(a_0,\ldots, a_{e-1})$ over  $\Fqd$ we consider the bilinear form $f_\ba^{(k)}=f_{a_0,0}^{(k)}+\ldots +f_{a_{e-1},e-1}^{(k)}$. Straightforward calculation shows that the matrix of $f_\ba^{(k)}$ in  the  bases $\{\sk_0,\ldots,\sk_{m-1}\}$ and $\{s_0'^{(k)},\ldots,s_{n-1}'^{(k)}\}$  is the $m\times n$ $q^k$-circulant matrix $D_\ba^{(k)}$ generated by $\ba$.   Now assume that $f_\ba^{(k)}$ is the null bilinear form.  Let $V=\<u_0,\ldots, u_{m-1}\>_{\Fq}$ and $V'=\<u_0',\ldots, u_{n-1}'\>_{\Fq}$. By Remarks \ref{rem_6} and \ref{rem_7} the matrix of $f_\ba^{(k)}$ in the bases $\{u_0,\ldots, u_{m-1}\}$ and $\{u_0',\ldots, u_{n-1}'\}$  is  $(K_m^{-1}E_m)^{t} D^{(k)}_\ba (K_n^{-1}E_n)$, which is clearly the zero matrix. As $K_m^{-1}E_m$ and  $K_n^{-1}E_n$ are both non singular we get  $D_\ba^{(k)}$ is the zero matrix giving $\ba$ is the zero array.
Therefore,  matrices in  $\cD_{m,n}^{(k)}(\Fqr)$ represent $q^{de}=q^{mn}$ distinct bilinear forms  acting  on the  $k$-th cyclic models  for $V$ and $V'$. As  $q^{mn}=|\Omega_{m,n}|$, we get  $\Omega_{m,n}\simeq \cD_{m,n}^{(k)}(\Fqd)$. 

To prove the second part of the Proposition we first note that Proposition \ref{prop_3} implies that the group of all $\Fq$-linear automorphisms of $\Omega_{m,n}$ is isomorphic to $(\cB_m^{(k)}(\Fqm)\times\cB_n^{(k)}(\Fqn))$, if $m<n$, and to $(\cB_m^{(k)}(\Fqm)\times\cB_n^{(k)}(\Fqn))\rtimes\langle \top\rangle$, if $m=n$.
\\
%The the semilinear automorphism  $\phi$ of $\Omega_{m,n}$ given by 
%\[
%f^{\phi}(v,v')=[f(v^{\phi^{-1}},{v'}^{\phi^{-1}})]^p.
%\] 
If $D_{(a_0,\ldots, a_{e-1})}^{(k)}$ is the matrix of $f$ in the bases $\{s_0,\ldots,s_{m-1}\}$ and $\{s_0,\ldots,s_{n-1}\}$ for $V$ and $V'$ respectively, then $f^\phi$ is $D_{(a_0^p,\ldots, a_{e-1}^p)}^{(k)}$ by Remark \ref{rem_9}. This concludes the proof.
\end{proof}
\begin{remark}\label{rem_10}
{\em 
The isomorphism $\nu=\nu_{\{\sk_0,\ldots,\sk_{m-1};s_0'^{(k)},\ldots,s_{n-1}'^{(k)}\}}:\Omega_{m,n}\rightarrow \cD_{m,n}^{(k)}(\Fqd)$ is described as follows.  Let $V=\<u_0,\ldots, u_{m-1}\>_{\Fq}$ and $V'=\<u_0',\ldots, u_{n-1}'\>_{\Fq}$ and 
let $f\in\Omega_{m,n}$ with matrix $M_f$ over $\Fq$ in the  bases $\{u_0,\ldots, u_{m-1}\}$  and $\{u'_0,\ldots, u'_{n-1}\}$  of $V$ and $V'$. Since $\{u_0,u_1,\ldots, u_{m-1}\}$ is a basis for $V(m,q^d)$ and $\{u_0',u_1,\ldots, u_{n-1}'\}$ is
a basis for $V(n,q^d)$, we can extend the action of $f$ on $V\times V'$ to an action on $V(m,q^d)\times V(n,q^d)$ in the natural way. Let  $f(s_0^{(k)},s_j'^{(k)})=a_{j}\in\Fqd$, $j=0,\ldots, e-1$. By Remarks \ref{rem_6} and \ref{rem_7}, the matrix of the change of basis from $\{u_0,\dots, u_{r-1}\}$  to $\{s_0^{(k)},\ldots,s_0^{(k)}\}$ is $E_r^{-1}K_r$. Therefore, $\nu(f)=D_{\ba}^{(k)}=(E_m^{-1}K_m)^{t} M_f (E_n^{-1}K_n)$, with $\ba=(a_0,\ldots, a_{e-1})$.
Since change of bases in $V(m,q^d)\times V(n,q^d)$ preserves the rank of  bilinear forms, we have $\rank(f)=\rank(M_f)=\rank(D_\ba^{(k)})$.
}
\end{remark}

%
%

%\sout{The utility of using the cyclic models for the vector spaces $V$ and $V'$ and representing bilinear forms with $q^k$-circulant $m\times n$ matrices turns out to be particularly useful to calculating the automorphism groups of MRD codes, at least  when $m$ is a divisor of $n$, as we show in the following sections.}}

%
%
%%%%%%%%%%%%%%%%%%%%%%%%%%%%%%%%%%%%%%%%%%%%%%%%%%%%%%%%%%%%%%%%%%%%%%%%%
%%%%%%%%%%%%%%%%%%%%%%%%%%%%%%%%%%%%%%%%%%%%%%%%%%%%%%%%%%%%%%%%%%%%%%%%%
%%%%%%%%         From square MRD codes                   %%%%%%%%%%%%%%%%
%%%%%%%%        to rectangular MRD codes                 %%%%%%%%%%%%%%%%
%%%%%%%%%%%%%%%%%%%%%%%%%%%%%%%%%%%%%%%%%%%%%%%%%%%%%%%%%%%%%%%%%%%%%%%%%
%%%%%%%%%%%%%%%%%%%%%%%%%%%%%%%%%%%%%%%%%%%%%%%%%%%%%%%%%%%%%%%%%%%%%%%%%
%%%%%%%%%%%%%%%%%%%%%%%%%%%%%%%%%%%%%%%%%%%%%%%%%%%%%%%%%%%%%%%%%%%%%%%%%
%
%
\section{Puncturing generalized Gabidulin codes} \label{sec_3}

Let $\cX$ be a rank distance code in $M_{n,n}(\Fq)$ and $A$  any given $m\times n$ matrix of rank $m$, $m<n$. It is clear that the set  $A\cX=\{AM: M\in \cX \}$ is a rank distance code in $M_{m,n}(\Fq)$. We say that the code $A\cX$, which we will denote by $\cP_A(\cX)$, is  obtained by {\em puncturing  $\cX$ with $A$} and $\cP_A(\cX)$  is  known as  a {\em punctured code}.

\begin{theorem}[Sylvester's rank inequality] \cite[p.66]{gan}
Let $A$ be an $m \times n$ matrix and $M$ an $n\times n'$  matrix. Then
\[
\rank(AM)\ge \rank(A)+\rank(M)-n.
\]
\end{theorem}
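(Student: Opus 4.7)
The plan is to give the standard linear-algebraic proof based on the column-space interpretation of rank. Viewing $A\colon \Fq^n\to \Fq^m$ and $M\colon \Fq^{n'}\to \Fq^n$ as $\Fq$-linear maps, the column space of $AM$ is precisely $A(\Im M)$, and so $\rank(AM)=\dim A(\Im M)$. The rank of $A$ and the rank of $M$ are then $\dim \Im A$ and $\dim \Im M$, while $\dim \Ker A = n-\rank(A)$ by the usual rank-nullity identity for $A$.

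First I would apply rank-nullity to the restricted map $A|_{\Im M}\colon \Im M \to \Fq^m$. This yields
\[
\dim \Im M \;=\; \dim A(\Im M) \;+\; \dim\bigl(\Im M \cap \Ker A\bigr),
\]
that is, $\rank(M)=\rank(AM)+\dim(\Im M\cap \Ker A)$.

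Next I would bound the intersection trivially: since $\Im M \cap \Ker A$ is a subspace of $\Ker A$, we have
\[
\dim\bigl(\Im M \cap \Ker A\bigr) \;\le\; \dim \Ker A \;=\; n-\rank(A).
\]
Substituting this into the preceding equality gives
\[
\rank(AM) \;=\; \rank(M) - \dim\bigl(\Im M \cap \Ker A\bigr) \;\ge\; \rank(M) - \bigl(n-\rank(A)\bigr) \;=\; \rank(A)+\rank(M)-n,
\]
which is the desired inequality.

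There is essentially no obstacle here: the proof is a one-shot application of rank-nullity to the restriction of $A$ to the column space of $M$, together with the obvious inclusion $\Im M \cap \Ker A \subseteq \Ker A$. The only point worth emphasizing in the write-up is the identification of $\Im(AM)$ with $A(\Im M)$, which makes the restriction argument available.
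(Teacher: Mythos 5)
The paper does not prove Sylvester's rank inequality; it simply cites it to Gantmacher's \emph{The Theory of Matrices} as a classical fact. Your argument is a correct and self-contained proof of the standard kind: identifying $\Im(AM)$ with $A(\Im M)$, applying rank--nullity to the restriction $A|_{\Im M}$ (whose kernel is $\Im M\cap\Ker A$), and then bounding $\dim(\Im M\cap\Ker A)\le\dim\Ker A=n-\rank(A)$ is exactly the textbook route, and every step checks out.
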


\begin{theorem} (see also \cite[Corollary 35]{br})\label{th_6}
Let $\cX$ be an $(n,n,q;s)$-MRD code. Let $A$ be any $m \times n$ matrix  over $\Fq$ of rank $m$, with  $n-s  <  m \leq n$. Then the punctured code $\cP_A(\cX)$ is an $(m,n,q;s')$-MRD code, with $s'=s+m-n$.
\end{theorem}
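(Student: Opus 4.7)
The plan is to prove the two defining properties of an $(m,n,q;s')$-MRD code for $\cP_A(\cX)$: (i) the minimum rank distance of $\cP_A(\cX)$ is at least $s'=s+m-n$, and (ii) the cardinality of $\cP_A(\cX)$ meets the Singleton-like bound $q^{n(m-s'+1)}$. The main tool for (i) is the Sylvester rank inequality stated immediately above as Theorem 3.1, and the main tool for (ii) is a simple injectivity argument for the puncturing map $M\mapsto AM$.

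For (i), I would start with two distinct matrices $M_1,M_2\in\cX$, set $N=M_1-M_2$, and note that $\rank(N)\ge s$ by the MRD hypothesis. Applying Sylvester's rank inequality to $AN$ with $\rank(A)=m$ and $N\in M_{n,n}(\Fq)$ gives
\[
\rank(AM_1-AM_2)=\rank(AN)\ge \rank(A)+\rank(N)-n\ge m+s-n=s'.
\]
This immediately shows that the minimum distance of $\cP_A(\cX)$ is at least $s'$, provided $AM_1\ne AM_2$ whenever $M_1\ne M_2$.

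For (ii), I would observe that the hypothesis $n-s<m$ gives $s'=s+m-n\ge 1$, so the chain of inequalities above also yields $\rank(AN)\ge 1>0$, hence $AM_1\ne AM_2$. Therefore the puncturing map $\cX\to M_{m,n}(\Fq)$, $M\mapsto AM$, is injective, and $|\cP_A(\cX)|=|\cX|$. Since $\cX$ is an $(n,n,q;s)$-MRD code, $|\cX|=q^{n(n-s+1)}$. Using the identity $n-s+1=m-s'+1$, we obtain
\[
|\cP_A(\cX)|=q^{n(n-s+1)}=q^{n(m-s'+1)},
\]
which is exactly the Singleton-like bound for an $(m,n,q;s')$-code. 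Combining (i) and (ii), $\cP_A(\cX)$ is an $(m,n,q;s')$-MRD code.

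Essentially there is no deep obstacle here: the statement is a direct consequence of Sylvester's inequality, and the only point needing any care is to check that the assumption $m>n-s$ is used twice, first (implicitly) to guarantee $s'\ge 1$ so that puncturing is injective and the cardinality is preserved, and second to ensure that the Singleton bound for the punctured parameters is actually met.
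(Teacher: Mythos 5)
Your proof is correct and takes essentially the same approach as the paper: both use Sylvester's rank inequality for the distance bound and an injectivity argument for the cardinality. The only small difference is that you derive injectivity as a corollary of the Sylvester inequality itself (since $s'\ge 1$ forces $\rank(AN)\ge 1$), whereas the paper argues injectivity separately by bounding $\dim(\ker A)$; your version is slightly more economical.
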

\begin{proof}
We first show that the map $M\mapsto AM$ is injective. Assume $AM_1=AM_2$ for some distinct matrices $M_1, M_2 \in \cX$. Then
$A(M_1-M_2)=0$, giving $\dim(\ker A)\ge \rank\,(M_1-M_2)\ge s>0$, thus $\rank\, A=m-\dim(\ker A)<m$, a contradiction. Therefore, $|A \cX|=|\cX|=q^{n(n-s +1)}=q^{n(m-s'+1)}$.

By the  Sylvester's rank inequality, we have
\[
\rank(AM_1-AM_2)\ge \rank(A)+\rank(M_1-M_2)-n \ge m+s-n=s'>0.
\]
It follows that $A\cX$ is an $(m,n,q;s')$-MRD code.
% \qed
\end{proof}
\begin{remark}\label{rem_3}
{\em
Let $B$ be matrix in $M_{m,n}(\Fq)$  of rank $m$.  It is known that there exist $S\in\GL(m,q)$ and $T\in\GL(n,q)$ such that $B=SAT$  \cite[p.62]{gan}. Therefore
	\[
\cP_B(\cX)=\cP_{SAT}(\cX)=S\cP_{A}(T\cX),
	\]
	giving $\cP_B(\cX)$ is equivalent to  the punctured code  $\cP_A(T\cX)$. Note that $T\cX$ is equivalent to $\cX$.
	}
	\end{remark}
%
%\begin{remark}\label{rem_4}
%We notice that to apply Theorem \ref{th_6} the  MRD-code is not necessarily linear. Let  $\cX$ one of the non-linear MRD-code contructed in \cite{ds}. For any $A\in M_{m,n}(\Fq)$  of rank $m$, with  $1  <  m \leq n$, the punctured code $\cP_A(\cX)$ is a $(m,n,q;m-1)$-MRD code. {\bf We need to check if it is non-linear, at least for some particluar $A$}
%\end{remark}
%
We recall the construction of the generalized Gabidulin codes as given in \cite{gab}. For any positive integers $t,k$ with  $t\le n$ and $\gcd(k,n)=1$, set  $\cL_t^{(k)}(\Fqn)$ to be the set of all  $q^k$-polynomials over $\Fqn$ of $q^k$-degree at most $t-1$, i.e.
\[
\cL_t^{(k)}(\Fqn)=\{a_0+a_1x^{[k]}+\ldots+a_{t-1}x^{[k(t-1)]}:a_i \in \Fqn\}.
\] 
\\
We note that by reordering the powers of $x$ in any $f\in\cL_n^{(k)}(\Fqn)$ we actually find a $q$-polynomial. However,  to study the generalized Gabidulin codes in terms of $q^k$-polynomials we need to keep the original order for the powers in $f$.

Let $g_0,\ldots, g_{m-1}\in\Fqn$, $m\le n$, be linearly independent  over $\Fq$. Let $G^{[k]}$ be the matrix
\[
G^{[k]}=\left(\begin{array}{cccc}
g_0   & g_1   & \cdots & g_{m-1}  \\
g_0^{[k]} & g_1^{[k]} & \cdots & g_{m-1}^{[k]}\\
\cdots&\cdots & \cdots &\cdots\\
g_0^{[(t-1)k]} & g_1^{[(t-1)k]}  & \cdots & g_{m-1}^{[(t-1)k]}
\end{array}\right).
\]

We consider the matrix $G^{[k]}$ as a generator matrix of a subset $\tilde\cG_t^{(k)}$ of arrays over $\Fqn$, i.e.  $\tilde\cG_t^{(k)}  = \tilde\cG_{(g_0,\ldots, g_{m-1});t}^{(k)}=  \{(f(g_0),\ldots,f(g_{m-1})) :f\in \cL_t^{[k]}(\Fqn)\}$.

Let $V'=\Fqn=\<u_0,\ldots, u_{n-1}\>_{\Fq}$. The map
\[
\begin{array}{rccl}
\varepsilon=\varepsilon_{\{u_0,\ldots, u_{n-1}\}}:  & \Fqn & \longrightarrow & \Fq^n\\
                                         & \sum{x_iu_i}  & \mapsto & (x_0,\ldots, x_{n-1})^t
\end{array}
\]
maps the set $\tilde\cG_t^{(k)}$ to the matrix set
\[
\varepsilon(\tilde\cG_t^{(k)})=\{(M^{(0)}\,M^{(1)}\,\ldots\,M^{(m-1)}):f\in\cL^{(k)}_{t}(\Fqn)\}\subseteq M_{n,m}(\Fq),
\]
 where $M^{(i)}=\varepsilon(f(g_i))$. Since the rank is invariant under matrix transposition and in this paper  we consider matrix codes in  $M_{m,n}(\Fq)$   with $m\leq n$, we may take the matrix code $\cG_t^{(k)}$ obtained by taking the  transpose of the elements in  $\varepsilon(\tilde\cG_t^{(k)})$.  Therefore $\cG_t^{(k)}$  is a $(m,n,q;m-t+1)$-MRD code.  These MRD codes  are called {\em generalized Gabidulin  codes} \cite{kg}.

 By Proposition \ref{prop_3}, we may identify  the elements in $\End(V')$ with elements in $\cL^{(k)}_{n}(\Fqn)$  via the map $D_{(a_0,\ldots,a_{n-1})}\mapsto a_0+a_1x^{[k]}+\ldots+a_{n-1}x^{[k(n-1)]}$. Therefore,   $\cG_t^{(k)}$ consists of the matrices of the restriction over \ the subspace $V=\langle g_0,\ldots, g_{m-1}\rangle _{\Fq}$ of $V'=\Fqn$ of all the  endomorhisms of $V'$. These matrices act on the set $\Fq^m$  of all row vectors as $v\mapsto vM$.  As we are working in the framework of bilinear forms, we consider  any matrix in $\cG_t^{(k)}$ as a matrix of the restriction on $V\times V'$ of the bilinear form acting on $V'\times V'$ whose $n\times n$ matrix  is the  matrix of an element in $\cL_{n}^{(k)}(\Fqn)$. By Proposition \ref{prop_8} the elements in $\cG_t^{(k)}$ can be represented by $q^k$-circulant matrices over $\Fqd$, where $d=\gcd(m,n)$.

{he following result seems to be known, but we include a proof for the sake of completness.
\begin{theorem}\label{prop_9}
Let $\cG$ be any  generalized Gabidulin $(n,n,q;n-t+1)$-code and let $A$ be any given $m\times n $ matrix over $\Fq$ of rank $m$, with  $t  <  m \leq n$. Then the punctured code $\cP_A(\cG)$ is a generalized  Gabidulin $(m,n,q;m-t+1)$-code. Conversely, every generalized  Gabidulin  $(m,n,q;m-t+1)$-code, with $1\le t\le m$, is obtained by puncturing a generalized Gabidulin $(n,n,q;n-t+1)$-code.
\end{theorem}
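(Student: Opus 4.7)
The plan is to work directly with the row structure of matrices in $\cG$. Since $\cG$ is an $(n,n,q;n-t+1)$ generalized Gabidulin code, its evaluation points $g_0,\ldots,g_{n-1}\in\Fqn$ are $\Fq$-linearly independent, hence form a basis of $\Fqn$ over $\Fq$. Using the isomorphism $\varepsilon$ defined before the theorem, each matrix $M_f\in\cG$ is the $n\times n$ matrix whose $i$-th row is $\varepsilon(f(g_i))^t$, as $f$ ranges over $\cL_t^{(k)}(\Fqn)$.

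The main computation is to see what left multiplication by $A$ does to these rows. Write the rows of $A$ as $\alpha_1,\ldots,\alpha_m\in\Fq^n$; these are $\Fq$-linearly independent since $\rank(A)=m$. The $j$-th row of $AM_f$ is $\sum_{i=0}^{n-1}(\alpha_j)_i\,\varepsilon(f(g_i))^t$, and because $f$ is $\Fq$-linear and $\varepsilon$ is an $\Fq$-linear isomorphism, this equals $\varepsilon\bigl(f(h_j)\bigr)^t$ with
\[
h_j=\sum_{i=0}^{n-1}(\alpha_j)_i\,g_i\in\Fqn.
\]
The elements $h_1,\ldots,h_m$ are $\Fq$-linearly independent, since the map $\Fq^n\to\Fqn$, $\alpha\mapsto\sum_i\alpha_i g_i$, is an $\Fq$-linear isomorphism carrying $\alpha_1,\ldots,\alpha_m$ to $h_1,\ldots,h_m$. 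Therefore $\cP_A(\cG)$ is exactly the generalized Gabidulin $(m,n,q;m-t+1)$-code defined by $\cL_t^{(k)}(\Fqn)$ evaluated at $h_1,\ldots,h_m$; that this punctured code really has $(m,n,q;m-t+1)$-MRD parameters follows from Theorem \ref{th_6}.

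For the converse, let $\cG'$ be a generalized Gabidulin $(m,n,q;m-t+1)$-code with evaluation points $h_1,\ldots,h_m\in\Fqn$, $\Fq$-linearly independent. Extend this family to an $\Fq$-basis $h_1,\ldots,h_n$ of $\Fqn$ and form the generalized Gabidulin $(n,n,q;n-t+1)$-code $\cG$ with these evaluation points and the same $\cL_t^{(k)}(\Fqn)$. Taking $A=(\I_m\mid\0_{n-m})$, the computation above immediately yields $\cP_A(\cG)=\cG'$.

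I do not expect any serious obstacle: the entire content is that $\Fq$-linearity of the polynomials in $\cL_t^{(k)}(\Fqn)$ interacts cleanly with taking $\Fq$-linear combinations of the evaluation points, so that puncturing on the matrix side is the same as restricting the evaluation to a subspace on the polynomial side. The only small subtlety is keeping track of the transpose built into the definition of $\cG_t^{(k)}$ (rows versus columns), which is why left multiplication by $A$, rather than right multiplication, is the operation that acts naturally on the evaluation points.
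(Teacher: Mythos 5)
Your proof is correct and follows essentially the same route as the paper's: the paper interprets $A$ as a linear map $\tau$ and describes $\cP_A(\cG)$ as the restriction of the bilinear forms to $V\times V'$ with $V=\langle\tau(u_0),\ldots,\tau(u_{m-1})\rangle$, while you phrase the identical observation concretely at the level of matrix rows, showing the $j$-th row of $AM_f$ is $\varepsilon(f(h_j))^t$ with $h_j=\sum_i(\alpha_j)_i g_i$. Both arguments amount to the same change of evaluation points, and your converse (extend $h_1,\ldots,h_m$ to a basis, puncture with $(\I_m\mid\0_{n-m})$) matches the paper's converse exactly.
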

\begin{proof}
Let $V'=\Fqn=\<u_0,\ldots, u_{n-1}\>_{\Fq}$. By the argument above, $\cG$ is  considered as the set of all bilinear forms acting on $V'\times V'$ whose matrix corresponds to a  $q^k$-polynomial in $\cL_{t}^{(k)}(\Fqn)=\{a_0+a_1x^{[k]}+\ldots+a_{t-1}x^{[k(t-1)]}:a_i \in \Fqn\}$.

The given matrix $A=(a_{ij})$ corresponds to the linear transformation
\[
\begin{array}{rccl}
\tau \colon & u_i & \mapsto & \sum_{j=0}^{n-1}{a_{ij}u_j},
\end{array}\ \ \ i=0,\ldots, m-1.
\]
As $\rank\, A=m$, the subspace $V=\langle \tau(u_0),\ldots,\tau(u_{m-1})\rangle $ is an $m$-dimensional subspace of $V'$. It follows that    $\cP_A(\cG)$ consists of the matrices of the bilinear forms   on $V\times V'$ in the bases $\{g_i=\tau(u_i): i=0,\ldots,m-1\}$ and $\{u_0,\ldots, u_{n-1}\}$ of $V$ and $V'$, respectively. Therefore $\cP_A(\cG)$ is the  generalized Gabidulin code $\cG_{(g_0,\ldots,g_{m-1}),t}^{(k)}$. By Theorem \ref{th_6} $\cG_{(g_0,\ldots,g_{m-1}),t}^{(k)}$  is an  $(m,n,q;s+m-n)$-MRD code.

For the converse, let $\cG_t^{(k)}=\cG_{(g_0,\ldots,g_{m-1});t}^{(k)}$ be a generalized Gabidulin  code.  Set $V=\langle g_0,\ldots, g_{m-1}\rangle _{\Fq}$ and
extend $g_0,\ldots, g_{m-1}$ with $g_m,\ldots, g_{n-1}$ to form a basis of  $V'=\Fqn=V(n,q)$. Then, elements in  $\cG_t^{(k)}$  are the restriction on
$V\times V'$ of the bilinear forms acting on $V'\times V'$ whose matrix  is the  matrix of elements in $\cL_{t}^{(k)}(\Fqn)$ in  the basis $g_0,\ldots, g_{n-1}$. The set $\overline \cG_t^{(k)}=\cG_{(g_0,\ldots, g_{n-1});t}^{(k)}$  of such bilinear forms is a  generalized Gabidulin  $(n,n,q;n-t+1)$-code.
In addition, matrices in $\cG_t^{(k)}$ are obtained from the matrices of $\overline\cG_t^{(k)}$ by deleting the last $n-m$ rows, i.e. $\cG_t^{(k)}=A\overline \cG_t^{(k)}$ with $A=(I_m|O_{n-m})$. Therefore, the generalized Gabidulin code $\cG_t^{(k)}$ is obtained by puncturing  the generalized Gabidulin code $\overline \cG_t^{(k)}$ with $A$.
\end{proof}
From the proof of the previous result we get the following description for the generalized Gabidulin  codes.

\begin{corollary}
Let $g_0,\ldots, g_{m-1}\in\Fqn$, $m\le n$, be linearly independent  over $\Fq$. Then the 
 generalized Gabidulin  code $\cG_{(g_0,\ldots,g_{m-1});t}^{(k)}$ is the set of all the bilinear forms acting on  $V\times V'$ with $V=\<g_0,\ldots,g_{m-1}\>_{\Fq}$ and $V'=\Fqn$.
\end{corollary}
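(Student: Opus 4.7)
The plan is to extract this corollary almost verbatim from the argument already carried out in the proof of Theorem \ref{prop_9}. The key point is that the construction of $\cG_{(g_0,\ldots,g_{m-1});t}^{(k)}$ in that proof explicitly exhibited the code as a set of bilinear forms on $V\times V'$, not merely as an abstract collection of matrices.

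First I would extend $g_0,\ldots,g_{m-1}$ to an $\Fq$-basis $g_0,\ldots,g_{n-1}$ of $V'=\Fqn$. The full generalized Gabidulin code $\overline\cG_t^{(k)}=\cG_{(g_0,\ldots,g_{n-1});t}^{(k)}$ is then described, via the identification $\End(V')\simeq \cL_n^{(k)}(\Fqn)$ provided by Proposition \ref{prop_3}, as the set of bilinear forms on $V'\times V'$ whose matrix in the basis $(g_0,\ldots,g_{n-1})$ corresponds to a $q^k$-polynomial in $\cL_t^{(k)}(\Fqn)$.

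Next I would invoke the calculation in the proof of Theorem \ref{prop_9}: puncturing $\overline\cG_t^{(k)}$ with $A=(I_m|O_{n-m})$ yields exactly $\cG_{(g_0,\ldots,g_{m-1});t}^{(k)}$, and the geometric meaning of this particular puncturing is to restrict each bilinear form from $V'\times V'$ to $V\times V'$, where $V=\langle g_0,\ldots,g_{m-1}\rangle_{\Fq}$. This gives the stated inclusion "$\subseteq$", and the reverse inclusion follows from a cardinality comparison: both sides have size $q^{nt}$, since the restriction map from $\overline\cG_t^{(k)}$ to bilinear forms on $V\times V'$ is injective (no nonzero $q^k$-polynomial of $q^k$-degree $<n$ vanishes on the $m$ linearly independent values $g_0,\ldots,g_{m-1}$ simultaneously with rank-defect accounted for by Theorem \ref{th_6}).

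The only step that could be delicate is the cardinality matching at the end, but this is already ensured by Theorem \ref{th_6}, which tells us that puncturing preserves the MRD size $q^{nt}$. Thus the corollary is really a clean restatement of what the proof of Theorem \ref{prop_9} actually establishes, and writing it out requires no new ingredients.
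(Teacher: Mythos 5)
Your proposal follows the same route the paper itself indicates for this corollary, namely the converse direction in the proof of Theorem \ref{prop_9} (extend $g_0,\ldots,g_{m-1}$ to a basis, realize the code as a puncturing of $\overline\cG_t^{(k)}$ by $A=(I_m\mid O_{n-m})$, identify puncturing with restriction to $V\times V'$), so the approach matches. One imprecision worth flagging: the parenthetical justification of injectivity, ``no nonzero $q^k$-polynomial of $q^k$-degree $<n$ vanishes on the $m$ linearly independent values $g_0,\ldots,g_{m-1}$,'' is false as stated, since a nonzero $q^k$-polynomial of $q^k$-degree $\ge m$ can certainly vanish on an $m$-dimensional $\Fq$-subspace. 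What is actually needed is that a nonzero polynomial in $\cL_t^{(k)}(\Fqn)$, whose kernel has $\Fq$-dimension at most $t-1<m$, cannot contain $V$ in its kernel; equivalently one can just cite the injectivity established inside the proof of Theorem \ref{th_6}, as you also do. With that fix the cardinality comparison ($q^{nt}$ on both sides) and hence the corollary go through cleanly.
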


\begin{remark}
{\em  Let $e=\gcd(m,n)$ and $d=\lcm(m,n)$. From the arguments contained in Section  \ref{sec_2} there exists a (Singer) basis of $V=\<g_0,\ldots,g_{m-1}\>_{\Fq}$ and a (Singer)  basis of $V'=\Fqn$ such that the elements in $\cG_{(g_0,\ldots,g_{m-1});t}^{(k)}$ may be represented as $m\times n$ $q^k$-circulant matrices over $\Fqd$.
}
\end{remark}

\begin{remark}
{\em
By the isomorphism $\Omega_{m,n}\simeq\cD_{m,n}^{(1)}(\Fqd)$ stated by Proposition \ref{prop_8}, the Gabidulin code $\cG_{(g_0,\ldots,g_{m-1}),t}^{(1)}$ is actually the Delsarte code defined by (6.1) in \cite{del} with $V=\langle g_0,\ldots, g_{m-1}\rangle _{\Fq}$.
}
\end{remark}

In the rest of the paper, $m$  will be a divisor of $n$. We set $r=n/m$. Let $V=\langle  u_0,\ldots, u_{m-1}\rangle $ and $V'=\langle u'_0,\ldots, u'_{n-1}\rangle $ be  two vector spaces over $\Fq$ of dimension $m$ and $n$, respectively. If $m=n$ we take $V'=V=\langle  u_0,\ldots, u_{m-1}\rangle$.

In the light of the isomorphism  $\nu_{\{u_0,\ldots, u_{m-1};u'_0,\ldots, u'_{n-1}\}}$ described by (\ref{eq_4}), every bilinear form acting on $V\times V'$ may be identified  with an  $m\times n$ matrix over $\Fq$. In other words, if we assume $V$ is an $m$-dimensional subspace of $V'$ after a vector-space isomorphism, then the bilinear forms in $\Omega_{m,n}$ are the restrictions on $V\times V'$ of the bilinear form in $\Omega_{n,n}$. Thus,   $\Omega_{m,n}$ is the puncturing of $\Omega_{n,n}$ by a suitable $m\times n$ matrix of rank $m$.

 In this paper we work with cyclic models for  vector spaces over $\Fq$.  Let $\{s_0,\ldots, s_{n-1}\}$ be a Singer basis for $V'$. We  note that not all $m$-dimensional subspaces of $V'$ can be represented with a cyclic model over $\Fqm$. Therefore, we need to choose suitable vectors $\{s'_0,\ldots, s'_{m-1}\}$ in $V'$ such that the projection of the vectors in the cyclic model for $V'$  on the subspace spanned by $\{s'_0,\ldots, s'_{m-1}\}$ gives a cyclic model for $V=V(m,q)$. This is what we do in the rest of this section.

Let $\sigma'$ be a Singer cycle of $V'$ with associated primitive element $w'$. Let $\{s_0',\ldots,s_{n-1}'\}$ be  the Singer basis for $V'$ defined by $\sigma'$. Note that $s_i'\in V(n,q^n)$, for $i=0\ldots, n-1$.
 Set $s_i=\sum_{j=0}^{r-1} s_{i+jm}'$ for $i=0,1,\ldots, m-1$, $\sigma=\sigma'^{(q^{n}-1)/(q^m-1)}$ and  $w=w'^{(q^{n}-1)/(q^m-1)}$. Then $\sigma$ has order $q^m-1$ and $\omega$ is a primitive element of $\Fqm$ over $\Fq$. It is easily seen that  $s_{i}$ is an eigenvector for $\sigma$ with eigenvalue $w^{q^i}$, for $i=0,\ldots,m-1$.  Since $m$ divides $n$, the $\Fqm$-span $V(m,q^m)$ of $\{s_0,\ldots, s_{m-1}\}$ is contained in $V(n,q^n)$.  Let $\xi$ be the semilinear transformation on $V(m,q^m)$ whose linear part is defined by $\ell_{\xi}(s_i)=s_{i+1}$,  where the indices are considered modulo $m$,
and whose companion  automorphism is $\delta:x\in\Fqm\mapsto x^q\in\Fqm$. Since the subset  $\{xs_0+\ldots +x^{q^{m-1}}s_{m-1}:x \in \Fqm\}$ of $V(m,q^m)$ is fixed pointwise by $\xi$, it is a cyclic model for $V$. By Proposition \ref{prop_8}, every bilinear form  on $V\times V'$  can be represented by $m\times n$ $q$-circulant matrices over $\Fqn$.
 
\begin{lemma}\label{lem_5}
Let $k$ be a positive integer such that $\gcd(k,n)=1$. Then $s_i^{(k)}=\sum_{j=0}^{r-1} s_{i+jm}'^{(k)}$, for $i=0,1,\ldots, m-1$.
\end{lemma}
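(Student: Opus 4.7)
The plan is to prove the lemma by unwinding both notational layers and checking the equality index by index.

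First, I will recall the definitions: for the basis $\{s_0,\ldots,s_{m-1}\}$ of $V$, we set $s_i^{(k)} = s_{ki \bmod m}$, and for the basis $\{s'_0,\ldots,s'_{n-1}\}$ of $V'$, we set $s'_i{}^{(k)} = s'_{ki \bmod n}$. Combining the first with the hypothesis $s_l = \sum_{j=0}^{r-1} s'_{l+jm}$, the left-hand side becomes
\[
s_i^{(k)} \;=\; s_{ki \bmod m} \;=\; \sum_{j=0}^{r-1} s'_{(ki \bmod m) + jm},
\]
where the inner indices $(ki \bmod m) + jm$ run through one representative of each residue class modulo $m$ that reduces to $ki \bmod m$.

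Next, I would rewrite the right-hand side using the definition of $s'_i{}^{(k)}$:
\[
\sum_{j=0}^{r-1} s'_{i+jm}{}^{(k)} \;=\; \sum_{j=0}^{r-1} s'_{k(i+jm) \bmod n}.
\]
Writing $ki = am + b$ with $0 \le b < m$ (so $b = ki \bmod m$) and using $n = rm$, the exponent becomes $k(i+jm) \bmod n = ((a+kj) \bmod r)\, m + b$. So the right-hand side equals
\[
\sum_{j=0}^{r-1} s'_{((a+kj) \bmod r)\, m \,+\, (ki \bmod m)}.
\]

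The key step is to show that as $j$ runs over $\{0,1,\ldots,r-1\}$, the value $(a+kj) \bmod r$ also runs over $\{0,1,\ldots,r-1\}$. This follows because $r \mid n$ and $\gcd(k,n)=1$ together give $\gcd(k,r)=1$, so multiplication by $k$ permutes $\mathbb{Z}/r\mathbb{Z}$, and translation by $a$ is of course a bijection. Hence the set of indices $\{((a+kj) \bmod r)\, m + b : 0 \le j \le r-1\}$ coincides with $\{jm + b : 0 \le j \le r-1\}$, which is exactly the index set appearing in the expression for $s_i^{(k)}$ above.

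I do not expect any real obstacle: the argument is a purely combinatorial book-keeping of indices, with the only substantive input being the standard fact that $\gcd(k,n)=1$ forces $\gcd(k,r)=1$, so that $j \mapsto kj$ is a bijection on $\mathbb{Z}/r\mathbb{Z}$. Everything else is symbolic rewriting of the definitions of the $k$-cyclic Singer bases for $V$ and $V'$.
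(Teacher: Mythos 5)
Your argument is correct and follows essentially the same route as the paper's: both reduce the claim to the equality of the two index sets $\{(ki\bmod m)+jm\}$ and $\{k(i+jm)\bmod n\}$, write $ki=am+b$ with $0\le b<m$, and conclude by observing that $\gcd(k,n)=1$ forces $\gcd(k,r)=1$ so that $j\mapsto (a+kj)\bmod r$ is a bijection of $\mathbb{Z}/r\mathbb{Z}$. The only cosmetic difference is that the paper solves for $l$ in terms of $j$ via $k^{-1}\bmod r$, while you state the bijection directly.
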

\begin{proof}
For  $i=0,1,\ldots, m-1$ we have $s_i^{(k)}=s_{ki\bmod m}=\sum_{j=0}^{r-1} s_{(ki\bmod m)+jm}'$. On the other hand, $s_{i+jm}'^{(k)}=s_{k(i+jm)\bmod n}$. Therefore we need to prove
\[
\{(ki \bmod m)+jm:j=0,\ldots, r-1\}=\{k(i+lm)\bmod n:l=0,\ldots, r-1\}.
\]
Let $ki=tm+s$ with $0\leq s \leq m-1$. For each $j\in \{0,1,\ldots,r-1\}$ we need to find $l\in \{0,1,\ldots,r-1\}$ such that $s+jm \equiv (tm+s+klm) \bmod n$.
 This is equivalent to 
\begin{equation}
\label{cc}
j \equiv (t+ kl)\bmod r,
\end{equation}
as  $r=n/m$. 
Since $\gcd(k,n)=1$, we also have $\gcd(k,r)=1$. Let $k^{-1}$ denote the inverse of $k$ modulo $r$.
With $l=k^{-1}(j-t)\bmod r$ equation (\ref{cc}) is satisfied and
\[
\{k^{-1}(j-t)\bmod r \colon j \in \{0,1,\ldots,r-1\}\}=\{0,1,\ldots,r-1\}.
\]
Hence the assertion is proved.
% \qed
\end{proof}

The above Lemma implies the following result.
\begin{proposition}\label{prop_10}
Let $k$ be a positive integer such that $\gcd(k,n)=1$ and $m$ a divisor of $n$. Let $\{s_0',\ldots,s_{n-1}'\}$ be a Singer basis for $V'$.  Set $s_i^{(k)}=\sum_{j=0}^{r-1} s_{i+jm}'^{(k)}$, for $i=0,1,\ldots, m-1$.
Then the  $\Fq$-subspace $\left\{\sum_{i=0}^{m-1}{x^{[ki]}s_i^{(k)}} \colon x \in \Fqm\right\}$ of $V(n,q^n)$ is  a  $k$-cyclic model  for $V$.
\end{proposition}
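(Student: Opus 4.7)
The plan is to reduce the statement to Lemma \ref{lem_5} together with the definition \eqref{eq_16} of the $k$-cyclic model. In the paragraph preceding Lemma \ref{lem_5}, the Singer basis $\{s_0,\ldots,s_{m-1}\}$ of the $m$-dimensional subspace $V$ of $V(n,q^n)$ is already constructed from the Singer basis $\{s_0',\ldots,s_{n-1}'\}$ of $V'$ via $s_i=\sum_{j=0}^{r-1}s_{i+jm}'$, and it is shown that $\{s_0,\ldots,s_{m-1}\}$ is indeed a Singer basis for $V$, so that the associated $1$-cyclic model of $V$ sits inside $V(n,q^n)$.

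Given this Singer basis of $V$, formula \eqref{eq_16} (with $r$ replaced by $m$) immediately produces the $k$-cyclic model of $V$ as
\[
\left\{\sum_{i=0}^{m-1} x^{[ki]} \, s_{ki\bmod m} : x\in\Fqm \right\}.
\]
So it suffices to show that for every $i\in\{0,\ldots,m-1\}$, the vector $s_{ki\bmod m}$ coincides with $\sum_{j=0}^{r-1}s_{i+jm}'^{(k)}$, which is exactly the element denoted $s_i^{(k)}$ in the statement of Proposition \ref{prop_10}.

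That identification is precisely the content of Lemma \ref{lem_5}. Substituting $s_{ki\bmod m}=\sum_{j=0}^{r-1}s_{i+jm}'^{(k)}$ into the display above yields the set in the proposition, completing the argument.

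The only nontrivial content is in the lemma itself, namely the set equality
\[
\{(ki\bmod m)+jm : j=0,\ldots,r-1\}=\{k(i+lm)\bmod n : l=0,\ldots,r-1\},
\]
whose verification uses that $\gcd(k,n)=1$ implies $\gcd(k,r)=1$, so that $k$ is invertible modulo $r$. Since this step has already been carried out, the proof of Proposition \ref{prop_10} is essentially a one-line consequence of Lemma \ref{lem_5} and the definition of the $k$-cyclic model, with no further obstacle.
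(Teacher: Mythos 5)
Your proof is correct and takes essentially the same approach as the paper's: both reduce Proposition~\ref{prop_10} to Lemma~\ref{lem_5}, relying on the fact (established just before the lemma) that $\{s_0,\ldots,s_{m-1}\}$ with $s_i=\sum_{j=0}^{r-1}s_{i+jm}'$ is a Singer basis for $V$ sitting inside $V(n,q^n)$. The only cosmetic difference is that the paper phrases the passage from the $1$-cyclic to the $k$-cyclic model via the change-of-basis permutation matrix $K_m^{-1}$ of Remark~\ref{rem_7}, whereas you invoke the definitional formula~\eqref{eq_16} directly; the mathematical content is identical.
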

\begin{proof}
By Remark \ref{rem_7} the $k$-cyclic model  for $V$ is obtained from the cyclic model $V=\{\sum_{i=0}^{m-1}{x^{q^i}s_i} \colon x \in \Fqm\}$  by applying the change of basis, say $\kappa$, from $\{s_0,\ldots,s_{m-1}\}$ with $s_i=\sum_{j=0}^{r-1} s_{i+jm}'$ for $i=0,1,\ldots, m-1$, to $\{s_0^{(k)},\ldots,s_{m-1}^{(k)}\}$. We recall that $\kappa$ is represented by the permutation matrix $K_{m}^{-1}$. By Lemma \ref{lem_5}, $s_i^{(k)}=\sum_{j=0}^{r-1} s_{i+jm}'^{(k)}$, for $i=0,1,\ldots, m-1$. This implies that   the image under $\kappa$ of the  cyclic model  $\{\sum_{i=0}^{m-1}{x^{q^i}s_i} \colon x \in \Fqm\}$ is $\left\{\sum_{i=0}^{m-1}{x^{[ki]}s_i^{(k)}} \colon x \in \Fqm\right\}$.
\end{proof}

Let $f$ be any given bilinear form acting on the $k$-th cyclic model of $V'$ with $q^k$-circulant matrix  $D_\ba^{(k)}$ in the basis $\{s_0'^{(k)},\ldots, s_{n-1}'^{(k)}\}$. As the matrix of the coordinates of the vectors $s_0^{(k)},\ldots, s_{m-1}^{(k)}$ in this basis is the $1\times r$ block matrix $A=(I_m \mid I_m \mid \ldots \mid I_m)$, the restriction $f|_{V\times V'}$ of $f$ on $ V\times V'$ has matrix $\overline D=AD_\ba^{(k)}$ in the bases $\{s_0^{(k)},\ldots, s_{m-1}^{(k)}\}$ and $\{s_0'^{(k)},\ldots, s_{n-1}'^{(k)}\}$.

To make notation easier, we index  the rows and columns of an $m\times n$ matrix $M$ by  elements in $\{0,\ldots,m-1\}$ and $\{0,\ldots, n-1\}$. Further, $M_{(i)}$ and $M^{(j)}$ will denote the $i$-th row and $j$-th column of $M$, respectively.

For $i=0,\ldots,m-1$, we have $A_{(i)}=(0,\ldots,0,1,0,\ldots,0,1,0\ldots,0,1,0\ldots)$,
with 1 at position $i+km$, for $k=0,\ldots, r-1$, and  0 elsewhere.
Let $D_\ba^{(k)}$ be generated by the array   $\ba=(a_0,\ldots,a_{n-1})$.  Then,
$[D_\ba^{(k)}]^{(j)}=(a_j,a_{j-1}^{[k]},\ldots,a_{j+1}^{[k(n-1)]})^t$, for $j=0,\ldots,n-1$.
Therefore, the $(i,j)$-entry of $\overline{D}$   is
\[
a_{j-i}^{[ki]}+a_{j-(i+m)}^{[k(i+m)]}+\ldots+a_{j-(i+(r-1)m)}^{[k(i+(r-1)m)]}=\sum_{h=0}^{r-1}a_{j-(i+hm)}^{[k(i+hm)]},
\]
where indices are taken modulo $n$. It turns out that  $\overline D$ is the $m\times n$ $q^k$-circulant matrix   over $\Fqn$  generated  by the array $(\sum_{h=0}^{r-1}a_{-hm}^{[khm]},\sum_{h=0}^{r-1}a_{1-hm}^{[khm]},\ldots, \sum_{h=0}^{r-1}a_{m-1-hm}^{[khm]})$.
In particular, if $\ba=(a_0,\ldots,a_{t-1},0,\ldots,0)$ for some $t\in\{1,\ldots, m\}$, then $\overline D$ is   generated  by the $m$-array  $(a_0,\ldots,a_{t-1},0,\ldots,0)$ giving  $\overline D$ to be the matrix of a bilinear form in the  rank distance code
\begin{equation}\label{eq_25}
\Phi_{m,n,t}^{(k)}=\bigoplus_{j=0}^{t-1}{\Omega_{j}^{(k)}},
\end{equation}
where the $\Fq$-subspaces $\Omega_{j}^{(k)}$ are given in (\ref{eq_28}). Note that the dimension of $\Phi_{m,n,t}$ over  $\Fq$ is $nt$.
The above arguments together with Theorem \ref{prop_9}  prove the following result.

\begin{theorem}
Let   $m>1$ be  any divisor of $n$.  For any  $t\in\{1,\ldots, m\}$,  the  subset $\Phi_{m,n,t}^{(k)}$ of  $\Omega_{m,n}$   is a generalized Gabidulin $(m,n,q;m-t+1)$-code.
\end{theorem}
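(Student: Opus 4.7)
The pre-theorem discussion has already done the bulk of the work: it shows that if $D_{\mathbf a}^{(k)}$ is the $n\times n$ $q^k$-circulant matrix over $\Fqn$ generated by the $n$-array $\mathbf a=(a_0,\ldots,a_{t-1},0,\ldots,0)$, then left-multiplication by $A=(I_m\mid I_m\mid\cdots\mid I_m)$ produces exactly the $m\times n$ $q^k$-circulant matrix generated by the $m$-array $(a_0,\ldots,a_{t-1},0,\ldots,0)$, which represents a general element of $\Phi_{m,n,t}^{(k)}$. So the proof is essentially an assembly of previously established facts.

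First, I would identify the ``parent'' code. Take $V=V'$ (so that, inside $\Omega_{n,n}$, one has $d=n$ and $e=n$) and form
\[
\overline\Phi_{n,t}^{(k)} \;=\; \bigoplus_{j=0}^{t-1}\Omega_j^{(k)} \;\subseteq\; \Omega_{n,n}.
\]
By Proposition \ref{prop_8}, the elements of $\overline\Phi_{n,t}^{(k)}$ correspond, in the basis $\{s_0'^{(k)},\ldots,s_{n-1}'^{(k)}\}$, to the $n\times n$ $q^k$-circulant matrices generated by arrays of the form $(a_0,\ldots,a_{t-1},0,\ldots,0)$, and hence (via Proposition \ref{prop_3}) to the endomorphisms realized by the $q^k$-polynomials in $\cL_t^{(k)}(\Fqn)$. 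By the Corollary to Theorem \ref{prop_9}, $\overline\Phi_{n,t}^{(k)}$ is a generalized Gabidulin $(n,n,q;n-t+1)$-code.

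Next, I would exhibit $\Phi_{m,n,t}^{(k)}$ as a punctured code. By Proposition \ref{prop_10} together with Lemma \ref{lem_5}, the $k$-cyclic model for $V$ sits inside the $k$-cyclic model for $V'$ via the vectors $s_i^{(k)}=\sum_{j=0}^{r-1}s_{i+jm}'^{(k)}$, whose coordinate matrix with respect to $\{s_0'^{(k)},\ldots,s_{n-1}'^{(k)}\}$ is precisely the block row matrix $A=(I_m\mid I_m\mid\cdots\mid I_m)$. The calculation preceding the theorem then gives $\Phi_{m,n,t}^{(k)}=A\,\overline\Phi_{n,t}^{(k)}=\cP_A(\overline\Phi_{n,t}^{(k)})$. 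The leftmost $m\times m$ block of $A$ is $I_m$, so $\rank(A)=m$, and the hypothesis $t\le m$ amounts to $n-s<m\le n$ for $s=n-t+1$, which is exactly the range in which puncturing preserves the MRD property.

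Finally, apply Theorem \ref{prop_9}: puncturing a generalized Gabidulin $(n,n,q;n-t+1)$-code by an $m\times n$ matrix of rank $m$ (with $n-s<m\le n$) yields a generalized Gabidulin $(m,n,q;m-t+1)$-code, and this is the conclusion. The only obstacle is bookkeeping — making sure the cyclic-model/Singer-basis identifications, the permutation matrix $K_m$ of Remark \ref{rem_7}, and the restriction matrix $A$ are all consistently interpreted — and this has already been handled in Proposition \ref{prop_10} and in the computation of $\overline D$ displayed just before the theorem.
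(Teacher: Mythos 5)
Your proof is correct and takes essentially the same route the paper does: the computation of $\overline D = A D_{\ba}^{(k)}$ preceding the theorem identifies $\Phi_{m,n,t}^{(k)}$ as $\cP_A(\overline\Phi_{n,t}^{(k)})$ for $A=(I_m\mid\cdots\mid I_m)$, and Theorem~\ref{prop_9} then upgrades the puncture of a generalized Gabidulin $(n,n,q;n-t+1)$-code to a generalized Gabidulin $(m,n,q;m-t+1)$-code. The only cosmetic point is that Theorem~\ref{prop_9} as stated assumes $t<m$, so the boundary case $t=m$ (where $\Phi_{m,n,m}^{(k)}=\Omega_{m,n}$ is trivially a generalized Gabidulin $(m,n,q;1)$-code) should be noted separately, though the paper does not do so either.
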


We now describe the generalized twisted Gabidulin codes as provided by Sheekey  in the recent paper \cite{she} by using the framework of  $q^k$-polynomials over $\Fqn$.   In \cite{ltz}  the equivalence between different  generalized twisted Gabidulin codes  was addressed.

Let $\N_{q^n/q}$ denote the norm map from $\Fqn$ onto $\Fq$:
\[
\N_{q^n/q} \colon y\in\Fqn \mapsto \N_{q^n/q}(y)=\prod_{j=0}^{n-1}{y^{q^j}}\in \Fq.
\]

\begin{theorem}\cite{ltz,she}\label{th_5}
 For any   $t\in\{1,\ldots,n-1\}$ and  $\mu\in \Fqn$ with $\N_{q^n/q}(\mu)\neq (-1)^{nt}$, define the subset
\[
\Gamma_{n,n,t,\mu,s}^{(k)}=\{f_{a,0}^{(k)}+f_{\mu a^{q^{sk}}, t}^{(k)}:a\in \Fqn\}
\]
of $\Omega_{n,n}$ and put 
\[
\cH_{n,n,t,\mu,s}^{(k)}=\Gamma_{n,n,t,\mu,s}^{(k)}\oplus \Omega_{1}^{(k)}\oplus\ldots\oplus\Omega_{t-1}^{(k)}.
\]
Then  $\cH_{n,n,t,\mu,s}^{(k)}$  is an $(n,n,q;n-t+1)$-MRD code which is not equivalent to $\Phi_{n,n,t}^{(k)}$ if $t \neq 1,n-1$. 
\end{theorem}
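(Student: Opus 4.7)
The plan is to reduce the statement to Sheekey's theorem on generalized twisted Gabidulin codes \cite{she} (together with the equivalence classification of Lunardon--Trombetti--Zhou \cite{ltz}) by translating everything into the language of $q^k$-linearized polynomials through the isomorphisms developed in Sections~\ref{sec_2} and \ref{sec_3}.

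First, I would make explicit the polynomial representation of each summand $\Omega_j^{(k)}$ defined in \eqref{eq_28}. Using Proposition \ref{prop_8} together with the identification $\End(V') \simeq \cL_n^{(k)}(\Fqn)$ via $D_{(a_0,\ldots,a_{n-1})}^{(k)}\mapsto a_0 x + a_1 x^{[k]}+\ldots+a_{n-1}x^{[k(n-1)]}$, one checks that the bilinear form $f_{a,j}^{(k)}$ corresponds to the monomial $a\, x^{[kj]}$. Consequently
\[
\Omega_j^{(k)} \;\longleftrightarrow\; \{a\, x^{[kj]}:a\in \Fqn\},
\]
so that the direct sum $\Omega_{1}^{(k)}\oplus\ldots\oplus\Omega_{t-1}^{(k)}$ corresponds to the $q^k$-polynomials of $q^k$-degree between $1$ and $t-1$, and $\Gamma_{n,n,t,\mu,s}^{(k)}$ corresponds to the set of polynomials $a\,x + \mu a^{q^{sk}} x^{[kt]}$ with $a\in \Fqn$.

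Second, I would conclude that $\cH_{n,n,t,\mu,s}^{(k)}$ is represented, under this correspondence, exactly by the polynomial family
\[
\bigl\{a_0 x + a_1 x^{[k]} + \ldots + a_{t-1} x^{[k(t-1)]} + \mu\, a_0^{q^{sk}} x^{[kt]} : a_0,\ldots,a_{t-1}\in \Fqn\bigr\},
\]
which is precisely Sheekey's generalized twisted Gabidulin code as defined in \cite[Remark 8]{she}. Sheekey's theorem then yields that this code is an $(n,n,q;n-t+1)$-MRD code under the hypothesis $\N_{q^n/q}(\mu)\neq (-1)^{nt}$, and the MRD property transfers back to $\cH_{n,n,t,\mu,s}^{(k)}$ via the rank-preserving isomorphism $\nu$ of Remark \ref{rem_10}.

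Third, for the inequivalence with $\Phi_{n,n,t}^{(k)}$: the polynomial description of $\Phi_{n,n,t}^{(k)}$ in \eqref{eq_25} matches the standard generalized Gabidulin code $\cL_t^{(k)}(\Fqn)$. The classification theorem of Lunardon, Trombetti and Zhou \cite{ltz} states that, whenever $1 < t < n-1$ and $\N_{q^n/q}(\mu)\neq(-1)^{nt}$, no generalized twisted Gabidulin code is equivalent (as a rank-metric code in $M_{n,n}(\Fq)$) to a generalized Gabidulin code. Since Proposition \ref{prop_8} shows that the equivalence in $\Omega_{n,n}$ matches the matrix-equivalence used in \cite{ltz} (both are generated by $\GL\times \GL$, transposition and Frobenius), the inequivalence is inherited by $\cH_{n,n,t,\mu,s}^{(k)}$ and $\Phi_{n,n,t}^{(k)}$.

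The only delicate point I expect is the bookkeeping in the first step: one must verify carefully that the monomial identification $f_{a,j}^{(k)} \leftrightarrow a x^{[kj]}$ is exact under the isomorphism $\nu_{\{\sk_0,\ldots,\sk_{n-1};s_0'^{(k)},\ldots,s_{n-1}'^{(k)}\}}$, tracking the conjugations by the Moore matrix $E_n$ in \eqref{eq_24} and the permutation $K_n$ of Remark \ref{rem_7}. Once this correspondence is pinned down, the MRD property and the inequivalence follow immediately from the cited theorems.
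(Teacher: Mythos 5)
Theorem~\ref{th_5} is stated in the paper as a direct citation of \cite{she,ltz} with no proof supplied; what remains to be verified is precisely the translation between the cyclic-model/bilinear-form setting of $\Omega_{n,n}$ used here and the $q^k$-linearized-polynomial setting of those references, and your argument carries out exactly that dictionary (via Propositions~\ref{prop_3} and~\ref{prop_8}, identifying $f_{a,j}^{(k)}$ with the monomial $a\,x^{[kj]}$ and hence $\cH_{n,n,t,\mu,s}^{(k)}$ with Sheekey's polynomial family) before invoking \cite{she} for the MRD bound and \cite{ltz} for the inequivalence. This is the same route the authors implicitly take, and your version is correct and complete.
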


Let  $A$ be the $1\times r$ block matrix $(I_m \mid I_m \mid \ldots \mid I_m)$. By  Theorem \ref{th_6}, for any  $t\in\{1,\ldots,m-1\}$, the punctured code  $\cP_A(\cH_{n,n,t,\mu,s}^{(k)})$ is an MRD code with the same parameters as the code $\Phi_{m,n,t}^{(k)}$  defined by (\ref{eq_25}). We  denote such $(m,n,q;m-t+1)$-MRD code  by  $\cH_{m,n,t,\mu,s}^{(k)}$. By using the decomposition (\ref{eq_7}) of $\Omega_{m,n}^{(k)}$ with $e=m$, we get
\begin{equation}\label{eq_29}
\cH_{m,n,t,\mu,s}^{(k)}=\Gamma_{m,n,t,\mu,s}^{(k)} \oplus \Omega_{1}^{(k)}\oplus\ldots\oplus\Omega_{t-1}^{(k)}
\end{equation}
where $\Gamma_{m,n,t,\mu,s}^{(k)}=\{f_{a,0}^{(k)}+f_{\mu a^{q^{sk}}, t}^{(k)}:a\in \Fqn\}$ and $f_{a,j}^{(k)}\in\Omega_{j}^{(k)}$ is defined by (\ref{eq_6}). It turns out that $\cH_{m,n,t,\mu,s}^{(k)}$ is an linear MRD code of dimension $nt$.
\begin{remark}
{\em
It is easy to see that  the MRD code $\cH_{m,n,t,\mu,s}^{(k)}$ has the same parameters as  the $(m,rm/2,q;m-1)$-MRD codes provided in \cite{bmpz}  only for $t=2$ and $n=m$.
}
\end{remark}

%
%%%%%%%%%%%%%%%%%%%%%%%%%%%%%%%%%%%%%%%%%%%%%%%%%%%%%%%%%%%%%%%%
%%%%%%%%%%%%%%%%%%%%%%%%%%%%%%%%%%%%%%%%%%%%%%%%%%%%%%%%%%%%%%%%
%%%%%%%% The automorphism group of Delsarte's  MRD codes  %%%%%%
%%%%%%%%%%%%%%%%%%%%%%%%%%%%%%%%%%%%%%%%%%%%%%%%%%%%%%%%%%%%%%%%
%%%%%%%%%%%%%%%%%%%%%%%%%%%%%%%%%%%%%%%%%%%%%%%%%%%%%%%%%%%%%%%%
%
%
\section{The automorphism group of some punctured  generalized Gabidulin codes} \label{sec_4}

Very recently, Liebhold and  Nebe calculated the group of all linear automorphisms of any generalized Gabidulin code.  Here we give the finite fields version of Theorem 4.9 in \cite{lnebe}.
\begin{theorem}\cite{lnebe}\label{th_8}
Let $\cG_t^{(k)}=\cG_{(g_0,\ldots, g_{m-1}),t}^{(k)}$ be a  generalized Gabidulin code. Let $\Fq\le \mathbb{F}_{q^{m'}} \le \Fqn$ be the maximal subfield of $\Fqn$ such that $\langle g_0,\ldots, g_{m-1}\rangle _{\mathbb{F}_q}$ is an $\mathbb{F}_{q^{m'}}$-subspace. Let  $\mathbb{F}_{q^d}$ be the minimal subfield of $\Fqn$ such that $\langle g_0,\ldots, g_{m-1}\rangle _{\Fq}$ is contained in a one-dimensional $\mathbb{F}_{q^d}$-subspace. Then there is a subgroup $G$ of $\Gal(\mathbb{F}_{q^d}/\Fq)$ such that the group of all linear automorphisms of $\cG_t^{(k)}$ is isomorphic to 
\[
(\mathbb{F}_{q^{m'}}^\times \times \GL(n/d,q^d)  )\rtimes G.
\]
\end{theorem}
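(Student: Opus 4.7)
The plan is to follow the strategy of Theorem~4.9 in \cite{lnebe}, specialized to the finite-field setting. I identify $\cG_t^{(k)}$ with the set of restrictions $f|_V$ of the $q^k$-polynomials $f\in\cL_t^{(k)}(\Fqn)$ of $q^k$-degree at most $t-1$ to $V=\langle g_0,\ldots,g_{m-1}\rangle_{\Fq}\subseteq V'=\Fqn$. A linear automorphism of $\cG_t^{(k)}$ is then a pair $(g,g')\in\GL(V)\times\GL(V')$ with $g'\circ f\circ g\in\cG_t^{(k)}$ for every $f\in\cG_t^{(k)}$. My plan is first to exhibit three families of automorphisms accounting respectively for the factors $\Fqmp^\times$, $\GL(n/d,q^d)$ and $G\le\Gal(\Fqd/\Fq)$, and then to use the middle and right idealizer invariants of $\cG_t^{(k)}$ to prove that no further automorphisms exist.

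For the first step, (a) for $\alpha\in\Fqmp^\times$ the multiplication map $\mu_\alpha\colon v\mapsto\alpha v$ stabilizes $V$ by maximality of $m'$ and sends $f(x)=\sum a_i x^{[ki]}\in\cL_t^{(k)}$ to $\sum a_i\alpha^{[ki]}x^{[ki]}\in\cL_t^{(k)}$, of the same $q^k$-degree. (b) Since $d$ is minimal with $V$ contained in a $1$-dimensional $\Fqd$-subspace of $\Fqn$, rescaling the generators $g_i$ (which does not change the code) lets one assume $V\subseteq\Fqd$; then for every $\Fqd$-linear $\phi\in\GL(n/d,q^d)$ and every $v\in V\subseteq\Fqd$ one has $\phi(f(v))=\sum\phi(a_i)v^{[ki]}$, again an element of $\cL_t^{(k)}$ evaluated at $v$. (c) A Frobenius power $\phi_j\colon x\mapsto x^{q^j}$ on $\Fqn$ which stabilizes $V$ satisfies $\phi_j(f(v))=\sum a_i^{q^j}\phi_j(v)^{[ki]}$, so it acts on $\cG_t^{(k)}$ by raising the coefficients to $q^j$; since $\phi_j$ is $\Fqd$-linear precisely when $d\mid j$, quotienting the stabilizer of $V$ in $\langle\phi_1\rangle$ by its intersection with $\langle\phi_d\rangle$ produces a subgroup $G$ of $\Gal(\Fqd/\Fq)$. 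Checking the natural commutation relations among (a), (b) and (c) then produces the claimed semidirect product inside $\Aut(\cG_t^{(k)})$.

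For the reverse inclusion, I would appeal to the middle and right idealizers
\[
M(\cG_t^{(k)})=\{Y\in\End_{\Fq}(V):\cG_t^{(k)}Y\subseteq\cG_t^{(k)}\},\quad R(\cG_t^{(k)})=\{X\in\End_{\Fq}(V'):X\cG_t^{(k)}\subseteq\cG_t^{(k)}\},
\]
both $\Fq$-subalgebras that are invariants of the linear equivalence class. For a generalized Gabidulin code I would expect $M(\cG_t^{(k)})\simeq\Fqmp$ (multiplication on $V$) and $R(\cG_t^{(k)})\simeq M_{n/d}(\Fqd)$ ($\Fqd$-linear endomorphisms of $V'$). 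A linear automorphism must normalize both, so a Skolem--Noether style argument forces the $V$-component into $\Fqmp^\times\rtimes\Aut(\Fqmp/\Fq)$ and the $V'$-component into $\GL(n/d,q^d)\rtimes\Aut(\Fqd/\Fq)$; requiring the pair to preserve the specific $nt$-dimensional space $\cL_t^{(k)}$ then cuts the two Galois quotients down to the common subgroup $G$. The main obstacle is the precise identification of $R(\cG_t^{(k)})$ as the full matrix algebra $M_{n/d}(\Fqd)$: this relies crucially on the minimality of $d$ and on the explicit $q^k$-polynomial description of $\cG_t^{(k)}$, and without it one cannot rule out intermediate candidate subalgebras that would change the computed automorphism group.
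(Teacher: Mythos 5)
The paper does not prove Theorem~\ref{th_8}: it is stated as the finite-field version of Theorem~4.9 of Liebhold and Nebe~\cite{lnebe}, and no proof is given in the source, so there is no internal proof to compare your sketch against. Your outline faithfully reproduces the architecture of the Liebhold--Nebe argument (exhibit the three visible families of automorphisms, then use the middle and right idealizers together with a Skolem--Noether style rigidity argument to rule out anything else), and the gap you flag yourself at the end --- the identification $R(\cG_t^{(k)})\simeq M_{n/d}(\Fqd)$, which hinges on the minimality of $d$ --- is indeed the crux of that proof and cannot be skipped. One small imprecision in part (a): $\mu_\alpha$ stabilizes $V$ simply because $V$ \emph{is} an $\Fqmp$-subspace (that is how $m'$ is defined); the \emph{maximality} of $m'$ is what forces the middle idealizer to be no larger than $\Fqmp$, i.e.\ it is used in the converse direction.

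What the paper \emph{does} prove from scratch is the special case $\langle g_0,\ldots,g_{m-1}\rangle_{\Fq}=\Fqm$: Theorem~\ref{th_2} computes $\Aut(\Phi_{m,n,t})$ by writing an arbitrary linear automorphism as a pair of Dickson ($q$-circulant) matrices and forcing their entries through the requirement that the decomposition $\Phi_{m,n,t}=\bigoplus_{j<t}\Omega_j$ be preserved. That is an explicit, coordinate-level computation in the cyclic model, entirely avoiding idealizers; it trades the generality of arbitrary $\langle g_0,\ldots,g_{m-1}\rangle$ for a self-contained elementary argument, and is the approach the paper then extends (Theorem~\ref{th_7}) to the twisted codes $\cH_{m,n,t,\mu,s}$ for which no result analogous to Theorem~\ref{th_8} was available.
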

\begin{remark}
{\em
The generalized Gabidulin code $\Phi_{m,n,t}^{(k)}$ defined by (\ref{eq_25}) corresponds to the case $\langle g_0,\ldots, g_{m-1}\rangle _{\Fq}=\Fqm$ in  the previous theorem.
}
\end{remark}

In spite of Theorem \ref{th_8}, we believe that  it is useful to have an explicit description of the full  automorphism group of an MRD code to compare MRD codes among each other; see \cite{she,tr}.

 Let $S'=\langle \sigma'\rangle $ be a Singer cyclic group of   $\GL(V')$  with associated semilinear transformation $\xi'$  as described in Section \ref{sec_2}. In the basis $\{s_0'^{(k)},\ldots,s_{n-1}'^{(k)}\}$, the matrix of $\sigma'$ is the diagonal matrix $\diag(w,w^{[k]},\ldots,w^{[k(n-1)]})$. Therefore, $(\sigma'^{i},\sigma'^{j})$ acts on $\Omega_{n,n}$ by mapping the bilinear form with $q^k$-circulant matrix $D_{(a_0,\ldots, a_{n-1})}^{(k)}$ to  the bilinear form with matrix $D_{(w^ia_0w^j,\ldots, w^ia_{n-1}w^{j[k(n-1)]})}^{(k)}$.
The matrix of the linear part $\ell_{\xi'}$ of $\xi'$  is the permutation matrix $D_{(0,\ldots,0,1)}^{(k)}$. Then $(\ell_{\xi'},\ell_{\xi'})$ acts on $\Omega_{n,n}$ by mapping the bilinear form with $q^k$-circulant $D_{(a_0,\ldots, a_{n-1})}^{(k)}$ to  the bilinear form with  matrix $D_{(a_0^{[k]},\ldots,a_{n-1}^{[k]})}^{(k)}$. Set $\bar \ell=(\ell_{\xi'},\ell_{\xi'})$ and $\bar C$ be the cyclic subgroup of $\Aut(\Omega_{n,n})$ generated by $\bar\ell$. It turns out that  any element in $(S'\times S')\rtimes \bar C\rtimes \Aut(\Fq)$  fixes every component $\Omega_{j}^{(k)}$ of $\Omega_{n,n}$.

In the paper \cite{she}, Sheekey gave a complete description of the automorphism group of the MRD codes  $\Phi_{n,n,t}^{(k)}$  and $\cH_{n,n,t,\mu,s}^{(k)}$, for any $t\in\{1,\ldots,n-2\}$.

\begin{theorem}\cite{she}\label{th_9} Let $q=p^h$, $p$ a prime. 
\begin{itemize}
\item[i)]For any given $t\in\{1,\ldots,n-2\}$, the automorphism group of  $\Phi_{n,n,t}^{(k)}$ is the semidirect product $(S'\times S')\rtimes \bar C\rtimes \Aut(\Fq)$.

\item[ii)] For any given $t\in\{1,\ldots,n-2\}$, the automorphism group of $\cH_{n,n,t,\mu,s}^{(k)}$  is the subgroup of $(S'\times S')\rtimes \bar C \rtimes \Aut(\Fq)$ whose elements correspond to the triples $((D_{\ba},D_{\bb});\bar\ell^i; p^e)$, with $\ba=(a,0,\ldots,0)$, $\bb=(b,0,\ldots,0)$  and $a^{q^s-1}b^{q^s-q^t}=\mu^{p^eq^i-1}$.
\end{itemize}
\end{theorem}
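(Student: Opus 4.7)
The plan is to establish each part in two directions: a forward inclusion that the stated subgroup lies inside $\Aut(\Phi_{n,n,t}^{(k)})$ (respectively $\Aut(\cH_{n,n,t,\mu,s}^{(k)})$), by a direct check on $q^k$-circulant matrices using the action formulas collected just above the statement, and a reverse inclusion that every automorphism is of the claimed form.

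For the forward direction, I would compute explicitly on the generating array $(a_0,\ldots,a_{n-1})$ of a $q^k$-circulant matrix. The pair $(\sigma'^i,\sigma'^j)$ multiplies the $l$-th coordinate by $w^i(w^j)^{[kl]}$, $\bar\ell$ raises every coordinate to the $q^k$-power, and $\phi$ raises every coordinate to the $p$-power. Each of these preserves the property that coordinates $t,t+1,\ldots,n-1$ vanish, so the indicated subgroup stabilises $\Phi_{n,n,t}^{(k)}$. For part (ii), the same operations stabilise $\Omega_1^{(k)}\oplus\cdots\oplus\Omega_{t-1}^{(k)}$, and imposing that the image of the parametric element $f_{c,0}^{(k)}+f_{\mu c^{q^{sk}},t}^{(k)}$ of $\Gamma_{n,n,t,\mu,s}^{(k)}$ remain in the $\Gamma$-component forces, by matching coefficients at positions $0$ and $t$, the relation $a^{q^s-1}b^{q^s-q^t}=\mu^{p^eq^i-1}$.

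For the reverse inclusion in part (i), I would invoke Theorem \ref{th_8} of Liebhold and Nebe. Since $\Phi_{n,n,t}^{(k)}$ is the generalized Gabidulin code with generating space $\Fqn$, the Liebhold--Nebe parameters are $m'=n=d$, and hence its linear automorphism group is $(\Fqn^\times\times\Fqn^\times)\rtimes G$ for some $G\leq\Gal(\Fqn/\Fq)$. Translating back into the cyclic-model basis via Remark \ref{rem_9} identifies the torus factor with $S'\times S'$ and $G$ with $\bar C$, recovering the linear part. To extend to semilinear automorphisms, I would decompose $\tau\in\Aut(\Omega_{n,n})$ as $\tau=\tau_0\cdot\phi^e\cdot\top^\epsilon$; since $\phi$ stabilises the code, $\tau_0\cdot\top^\epsilon$ does as well, and it remains to rule out $\epsilon=1$. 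This I would do by computing the generating array of the transpose of a $q^k$-circulant matrix with support in $\{0,\ldots,t-1\}$ and verifying that, for $1\leq t\leq n-2$, its support is not contained in $\{0,\ldots,t-1\}$.

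For the reverse inclusion in part (ii), Theorem \ref{th_8} no longer applies since $\cH_{n,n,t,\mu,s}^{(k)}$ is not a Gabidulin code when $t\not\in\{1,n-1\}$. Instead I would follow a nucleus approach: compute the middle and right nuclei of $\cH$ in the linearised-polynomial interpretation, establish that each is isomorphic to $\Fqn$, and deduce that any linear automorphism must act as multiplication by elements of these two copies of $\Fqn$ composed with a Galois-type symmetry, i.e.\ lie in $(S'\times S')\rtimes\bar C$. The transposition is again excluded by a support check and the Frobenius handled as in part (i); substitution into the parametric generator of $\Gamma_{n,n,t,\mu,s}^{(k)}$ then yields the displayed relation. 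The hardest step, and the main obstacle, is the nucleus analysis of $\cH$: one must carry out the polynomial computation pinning down the middle and right nuclei, and this rigidity statement is what truly forces the automorphism group to reduce to a torus. A secondary difficulty is excluding the transposition, since its compatibility with the $q^k$-circulant structure depends sensitively on both $k$ and $t$.
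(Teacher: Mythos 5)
Your proposal takes a genuinely different route from the paper, and it contains a real gap in part (ii). The paper never proves Theorem \ref{th_9} in isolation; both statements are deduced as the special case $m=n$ of Theorems \ref{th_2} and \ref{th_7}, which treat the general punctured situation $m\mid n$ by a single, self-contained mechanism: write an arbitrary automorphism $(A,B;\theta)$, replace $A$, $B$ by their Dickson ($q^k$-circulant) matrices $D_{(a_0,\ldots,a_{m-1})}$, $D_{(b_0,\ldots,b_{n-1})}$, expand the entries of $A^tD_{\ba}B$ on a generator $f_{\alpha,j}$ (as in (\ref{eq_19}) and (\ref{eq_8})), and read off, coefficient by coefficient in the linearised variable $\alpha$, the vanishing conditions that force $\ba$ and $\bb$ into the prescribed shape. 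Part (ii) then follows from Theorem \ref{th_4}: the subspace $\bigoplus_{j=1}^{t-1}\Omega_j^{(k)}$ is the \emph{unique} subspace of $\cH$ equivalent to $\Phi_{m,n,t-1}^{(k)}$, so every automorphism of $\cH$ stabilises it and hence lies in (a conjugate of) $\Aut(\Phi_{m,n,t-1}^{(k)})$, already known from part (i); the twist relation $a^{q^s-1}b^{q^s-q^t}=\mu^{p^eq^i-1}$ then falls out by substituting into the generator of $\Gamma$. Your plan instead appeals to Liebhold--Nebe (Theorem \ref{th_8}) for (i), and to a middle/right-nucleus analysis in the style of Trombetti--Zhou \cite{tz} for (ii); neither of these is the paper's mechanism.

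On correctness: the Liebhold--Nebe route for (i) is viable as an order argument once the forward inclusion is in place, since $m'=d=n$ bounds the linear automorphism group by $(q^n-1)^2\cdot n=|(S'\times S')\rtimes\bar C|$; but Theorem \ref{th_8} is only an abstract isomorphism, so identifying the torus with $S'\times S'$ requires a short normality/conjugacy argument rather than a citation of Remark \ref{rem_9}, which concerns only the action of $\Aut(\Fq)$. Note also that your transposition exclusion fails when $t=1$: a diagonal $q^k$-circulant array is transpose-invariant, so $\top\in\Aut(\Phi_{n,n,1}^{(k)})$; this reflects the fact that Sheekey's original hypothesis is $1<t<n-1$. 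The decisive gap is in (ii): the nucleus computation for $\cH_{n,n,t,\mu,s}^{(k)}$ is the load-bearing step of your argument and you explicitly leave it undone (``the main obstacle''). Even granting that both nuclei equal $\Fqn$, the deduction that linear automorphisms lie in $(S'\times S')\rtimes\bar C$ is not immediate --- an automorphism a priori only normalises the nuclei, and one needs the Skolem--Noether-type conjugacy step actually carried out in \cite{tz}. The paper's route via Theorem \ref{th_4} avoids the nucleus calculation entirely and makes (ii) a one-line corollary of (i), which is precisely why the authors chose it.
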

We now give more informations on the automorphism group of the MRD code $\cH_{n,n,t,\mu,s}^{(k)}$.

\begin{corollary}
$|\Aut(\cH_{n,n,t,\mu,s}^{(k)})|=l(q^n-1)(q^{gcd(n,s,t)}-1)$, for some divisor $l$ of $nh$.  If $\mathbb F_{p^d}$ is the smallest subfield of $\Fqn$ which contains $\mu$, then $\frac{nh}{d}\, |\, l$.
In particular, if $\mu\in \Fp^*$, then  $l=nh$.

\end{corollary}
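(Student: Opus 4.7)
The plan is to enumerate the automorphisms described in Theorem~\ref{th_9}(ii). Each such automorphism is a triple $((D_{\ba},D_{\bb});\bar\ell^i;p^e)$ with $0\le i<n$, $0\le e<h$, and $a,b\in\Fqn^*$ subject to the multiplicative relation $a^{q^s-1}b^{q^s-q^t}=\mu^{p^eq^i-1}$. I will first fix $(i,e)$ and count the admissible pairs $(a,b)$, then count the number $l$ of pairs $(i,e)$ for which the relation is solvable; multiplying these factors yields $|\Aut(\cH_{n,n,t,\mu,s}^{(k)})|$.

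For the inner count, consider the group homomorphism $\psi\colon\Fqn^*\times\Fqn^*\to\Fqn^*$, $\psi(a,b)=a^{q^s-1}b^{q^s-q^t}$. Since $\gcd(q,q^n-1)=1$, the factor $q^{\min(s,t)}$ appearing in $q^s-q^t$ is a unit modulo $q^n-1$; combining this with the identity $\gcd(q^u-1,q^v-1)=q^{\gcd(u,v)}-1$ gives
\[
[\Fqn^*:\Im\psi]=\gcd(q^s-1,\,q^{|s-t|}-1,\,q^n-1)=q^{\gcd(s,t,n)}-1.
\]
By the first isomorphism theorem $|\ker\psi|=(q^n-1)^2/|\Im\psi|=(q^n-1)(q^{\gcd(s,t,n)}-1)$, so whenever the right-hand side $\mu^{p^eq^i-1}$ lies in $\Im\psi$ there are exactly $(q^n-1)(q^{\gcd(s,t,n)}-1)$ solutions $(a,b)$. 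This accounts for the second factor in the claimed count.

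For the outer count, put $H=\Im\psi$ and let $\bar\mu\in\Fqn^*/H$ denote the class of $\mu$; set $d'=\ord(\bar\mu)$, which divides $q^{\gcd(s,t,n)}-1$. The admissibility of $(i,e)$ reads $d'\mid p^eq^i-1$, and since $q=p^h$ this becomes $p^{e+hi}\equiv 1\pmod{d'}$, i.e.\ $o\mid e+hi$ where $o=\ord_{d'}(p)$. As $(i,e)$ ranges over $\{0,\dots,n-1\}\times\{0,\dots,h-1\}$, the integer $e+hi$ takes each value in $\{0,\dots,nh-1\}$ exactly once, and $d'\mid p^{nh}-1$ forces $o\mid nh$. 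Hence the number of admissible pairs is $l=nh/o$, a divisor of $nh$.

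Finally, by definition of $d$ we have $\mu\in\mathbb F_{p^d}^*$, so $\ord(\mu)\mid p^d-1$; since $\ord(\bar\mu)$ divides $\ord(\mu)$ we deduce $d'\mid p^d-1$, whence $o\mid d$ and $nh/d\mid nh/o=l$. The case $\mu\in\Fp^*$ corresponds to $d=1$, which combined with $l\mid nh$ forces $l=nh$. The main technical step is the gcd computation for $[\Fqn^*:\Im\psi]$; everything else is bookkeeping of Frobenius orders.
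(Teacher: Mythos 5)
Your argument is correct and reaches the same count by the same two-step strategy as the paper: fix the Frobenius exponent, count the fiber of the product map, then count the admissible exponents. The paper's execution is more hands-on --- it computes $|G_{n,k}\cap G_{n,j}|$ as a gcd of subgroup orders, multiplies by preimage counts to obtain the fiber size $(q^n-1)(q^{\gcd(n,s,t)}-1)$, and verifies by an explicit algebraic manipulation that the set $\cH$ of admissible exponents is closed under addition in $\mathbb Z_{nh}$, hence a subgroup. You replace the first step by applying the first isomorphism theorem to the homomorphism $\psi$, obtaining $[\Fqn^*:\Im\psi]=q^{\gcd(n,s,t)}-1$ directly from the lattice of subgroups of the cyclic group $\Fqn^*$, so that every nonempty fiber is automatically a coset of $\ker\psi$; and you replace the second step by identifying the admissible set with the multiples of $o=\ord_{d'}(p)$ in $\mathbb Z_{nh}$, which makes the subgroup property, the divisibility $nh/d\mid l$, and the case $\mu\in\Fp^*$ all immediate. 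The two routes are mathematically equivalent and rest on the same identity $\gcd(q^a-1,q^b-1)=q^{\gcd(a,b)}-1$; yours is arguably the cleaner packaging of the same ideas.
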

\begin{proof}
Let $G_{n,k}=\{x^{q^k-1} \colon x\in \Fqn^*\}$.
Note that $G_{n,k}$ is a subgroup of the multiplicative group of $\Fqn^*$.
To calculate $|G_{n,k}|$ it is enough to observe that $x^{q^k-1}=y^{q^k-1}$ for some $x,y\in \Fqn^*$ if and only if
$x/y \in \Fqk^*$, and hence $x/y \in \mathbb F_{q^{\gcd(n,k)}}^*$. It follows that
$|G_{n,k}|=\frac{q^n-1}{q^{\gcd(n,k)}-1}$.
 Also, the size of the subgroup $G_{n,k} \cap G_{n,j}$ is
\[c_{n,k,j}=\gcd\left(\frac{q^n-1}{q^{\gcd(n,k)}-1},\frac{q^n-1}{q^{\gcd(n,j)}-1}\right).\]
Note that for each $c\in \Fqn$ we have $|G_{n,k} \cap cG_{n,j}|\in \{0,c_{n,k,j}\}$.

Let $d_{n,k,j}$ denote the number of pairs $(x,y)\in \Fqn^*\times \Fqn^*$ such that
\[
x^{q^k-1}y^{q^j-1}=1.
\]

If $x^{q^k-1}y^{q^j-1}=1$, then $x^{q^k-1}=(1/y)^{q^j-1}$ and hence $x^{q^k-1} \in G_{n,k} \cap G_{n,j}$.
It follows that we can choose $x_0=x^{q^k-1}$ in $c_{n,k,j}$ different ways, and it uniquely defines $y_0=y^{q^j-1}$.
We have
\[
|\{x \colon x^{q^k-1}=x_0\}|=|\{x \colon x^{q^k-1}=1\}|=q^{\gcd(n,k)}-1
\]
and
\[
|\{y \colon y^{q^j-1}=y_0\}|=|\{y \colon  y^{q^j-1}=1\}|=q^{\gcd(n,j)}-1,
\]
thus
\[
d_{n,k,j}=(q^n-1)(q^{\gcd(n,k,j)}-1).
\]

If $x^{q^k-1}y^{q^j-1}=c$, then $x^{q^k-1}=c (1/y)^{q^j-1}$, thus $x^{q^k-1} \in G_{n,k} \cap cG_{n,j}$.
From the above arguments, we get that for each $c\in \Fqn^*$, the number of pairs $(x,y)\in \Fqn^*\times \Fqn^*$ such that $x^{q^k-1}y^{q^j-1}=c$ is either 0, or $d_{n,k,j}$.

Now, let $\mu$ be a given element in  $\Fqn^*$, $q=p^h$, and let $\cH$ denote the set of integers $r$, such that
\[
x^{q^k-1}y^{q^j-1}=\mu^{p^r-1}
\]
has a solution in $\Fqn^*\times\Fqn^*$. By the above arguments, we have $0\in \cH$. If $x_0^{q^k-1}y_0^{q^j-1}=\mu^{p^r-1}$ and $x_1^{q^k-1}y_1^{q^j-1}=\mu^{p^s-1}$, then
\[
(x_0^{p^s})^{q^k-1}(y_0^{p^s})^{q^j-1}=\mu^{p^{r+s}-p^s}\]
 and
\[
(x_0^{p^s}x_1)^{q^k-1}(y_0^{p^s}y_1)^{q^j-1}=\mu^{p^{r+s}-1},
\] thus
$r,s \in \cH$ yields $r+s \in \cH$ giving  $\cH$ is an additive subgroup in $\mathbb{Z}_{nh}$. Therefore, $l=|\cH|$ divides $nh$.

%Assume $s\ge t$.
By the above arguments, the number of triples $(a,b,i)\in \Fqn^*\times \Fqn^*\times \mathbb{Z}_{nh}$ such that $a^{q^s-1}b^{q^s-q^t}=\mu^{p^{i}-1}$
is $l(q^n-1)(q^{gcd(n,s,t)}-1)$, for some divisor $l$ of $nh$.
If $\mathbb F_{p^d}$ is the smallest subfield of $\Fqn$ which contains $\mu$, then $\cH$ contains the additive subgroup of $\mathbb{Z}_{nh}$ generated by $d$, giving $
\frac{nh}{d}\, |\, l$.

If $\mu\in \Fp^*$, then $\mu^{p^i-1}=1$ for all $i\in \bZ_{nh}$.
% \qed
\end{proof}
Let $m>1$ be any divisor of $n$ and $k$ any positive integer such that $\gcd(k,n)=1$. Let  $S=\langle \sigma\rangle $ and $S'=\langle  \sigma' \rangle $ be  Singer cyclic groups of $\GL(V)$ and $\GL(V')$, respectively, and $\{s_0,\ldots, s_{m-1}\}$ and $\{s_0',\ldots, s_{n-1}'\}$ the Singer bases defined by $\sigma$ and $\sigma'$. 

 Set $\ell=(\ell_\xi,\ell_{\xi'})$ and let $C$ be the cyclic subgroup of $\Aut(\Omega_{m,n})$ generated by $\ell$.  Then $C\simeq\Aut(\Fqm/\Fq)$.

\begin{remark}\label{rem_5}
{\em 
To make notation easier, in all the arguments used in the actual  and in the next section we assume $k=1$. We put $\Omega_{j}=\Omega_{j}^{(1)}$ and  $\Phi_{m,n,t-1}=\Phi_{m,n,t-1}^{(1)}$.
 By Lemma \ref{lem_5},  the same  arguments work perfectly well for any $k$ with $\gcd(k,n)=1$ if the cyclic model of vector  spaces and  $q$-circulant matrices involved are replaced by the $k$-th cyclic model and $q^{k}$-circulant matrices. The details are left to the reader.
 }
\end{remark}

Both the Singer cyclic groups $S=\langle \sigma\rangle $ and $S'=\langle  \sigma' \rangle $, as well as the cyclic group $C$, fix every component $\Omega_{j}$ of $\Omega_{m,n}$ giving that every element in $(S\times S')\rtimes C$ is an automorphism of  $\Phi_{m,n,t}$, for any $t\in\{1,\ldots,m\}$.

\begin{theorem}\label{th_2}
%Let $V=V(m,q)$ and $V'=V(n,q)$ with  $m>1$  any divisor of $n$. Set $r=n/m$. 
Let $\overline T'$ be the subset of $\End(V')$ whose elements correspond to $q$-circulant matrices defined by an array of type
\[
(c_0,\underbrace{0,\ldots,0}_{m-1\mathrm{ \ times}},c_{1},\underbrace{0,\ldots,0}_{m-1\mathrm{ \ times}}, c_{r-1},\underbrace{0,\ldots,0}_{m-1\mathrm{ \ times}})
\]
 over $\Fqn$, and set $T'=\overline T'\cap\GL(V')$. Then, for any given $t\in\{1,\ldots,m-1\}$, the automorphism group of $\Phi_{m,n,t}$ is  the semidirect product $(S\times T')\rtimes C\rtimes \Aut(\Fq)$.
 \end{theorem}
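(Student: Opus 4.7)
My plan is to combine the Liebhold--Nebe theorem (Theorem \ref{th_8}) with an explicit identification of the factors $S$, $T'$, $C$ and $\Aut(\Fq)$ acting on the $k$-th cyclic model. Following Remark \ref{rem_5}, I will work with $k=1$; the general case is identical after replacing $q$-circulant matrices by $q^k$-circulant matrices and the cyclic model by the $k$-cyclic model. The strategy has two main halves: first show that the stated group sits inside $\Aut(\Phi_{m,n,t})$, and then match it with the full automorphism group via Theorem \ref{th_8}.

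For the containment, I would verify one factor at a time. The Singer cyclic group $S=\langle\sigma\rangle$ sends a bilinear form with $q$-circulant matrix $D_{\ba}$ to one whose generating array is obtained from $\ba$ by multiplying $a_j$ by the power $w^{1+q^j}$ (coming from simultaneous column/row scaling by eigenvalues of $\sigma$); in particular the zero pattern of $\ba$ is preserved, so $\Phi_{m,n,t}$ is fixed. For $T'$, a direct computation shows that the $n\times n$ $q$-circulant matrices over $\Fqn$ with the prescribed sparsity pattern correspond exactly to the $q^m$-polynomials, i.e.\ to the $\Fqm$-linear endomorphisms of $V'=\Fqn$; composing on the right with such a map preserves the $\Fq$-subspace $\Phi_{m,n,t}$ because its matrices have entries whose $j$-th column depends only on $a_0,\ldots,a_{t-1}$ through a shift/Frobenius rule that commutes with right multiplication by these block-scalar maps. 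For $C=\langle\ell\rangle$, the element $\ell$ sends $D_{\ba}$ to $D_{(a_0^q,\ldots,a_{t-1}^q,0,\ldots,0)}$ (as in the proof of Theorem \ref{th_9}(i)), so $\Phi_{m,n,t}$ is fixed. Finally $\phi$ and hence $\Aut(\Fq)$ act by raising all entries to the $p$-th power, again preserving the zero pattern.

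For the reverse containment, apply Theorem \ref{th_8} to $\Phi_{m,n,t}=\cG_{(s_0,\ldots,s_{m-1}),t}^{(1)}$. The Singer basis satisfies $\langle s_0,\ldots,s_{m-1}\rangle_{\Fq}=V\cong\Fqm$, so the maximal subfield of Theorem \ref{th_8} is $m'=m$, and the minimal subfield such that $V$ lies in a one-dimensional subspace is also $d=m$. Hence the linear automorphism group of $\Phi_{m,n,t}$ is isomorphic to $(\Fqm^\times\times\GL(n/m,q^m))\rtimes G$ for some subgroup $G\le\Gal(\Fqm/\Fq)$. The factor $\Fqm^\times$ is matched with $S$ through its action by multiplication in the Singer model, and $\GL(n/m,q^m)=\GL(r,q^m)$ is matched with $T'$ via the identification above of $q^m$-polynomials with the prescribed $q$-circulant matrices. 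Since $C$ has been exhibited as a group of automorphisms of order $m=|\Gal(\Fqm/\Fq)|$, one obtains $G=\Gal(\Fqm/\Fq)\cong C$. This handles all linear automorphisms.

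To upgrade to the full semilinear group I would argue that any semilinear automorphism, modulo the linear part just described, reduces to a field automorphism, and that $\phi$ normalizes both $S\times T'$ and $C$: these conjugation relations are immediate from the entry-wise $p$-power action in the Singer basis together with Remark \ref{rem_9}. Combining, $\Aut(\Phi_{m,n,t})=(S\times T')\rtimes C\rtimes\Aut(\Fq)$, which is the desired semidirect product. The main obstacle I anticipate is the explicit identification of $T'$ with $\GL(r,q^m)$, in particular checking that the sparsity pattern describing $T'$ produces \emph{exactly} the matrices of $\Fqm$-linear endomorphisms of $V'$ in the chosen Singer basis (and not just a subset), so that the dimension count $r^2$ matches $|\GL(r,q^m)|$ after passing to units; this reduces to a bookkeeping argument about how the shift in a $q$-circulant matrix interacts with the $m$-step zero blocks.
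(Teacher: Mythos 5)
Your proposal takes a genuinely different route from the paper. The paper's proof is a self-contained direct computation: it picks an arbitrary $\varphi=(A,B;\theta)\in\Aut(\Phi_{m,n,t})$, reduces to $\theta=1$, writes down the $l$-th entry of $(A^tD_{\ba}B)_{(0)}$ for a generic $f_{\alpha,j}$, and reads off the constraints forcing $A^t=D_{(a_0,0,\ldots,0)}$ and $B$ to have the stated sparsity pattern. You instead establish the forward containment by inspection and then invoke Theorem \ref{th_8} (Liebhold--Nebe) for the reverse containment via a cardinality count. The paper explicitly notes (right after Theorem \ref{th_8}) that it eschews this shortcut in favor of an explicit matrix description, so your route is the one the authors deliberately avoided; nevertheless it is a legitimate and faster path, and your identification $m'=d=m$ in the Liebhold--Nebe setup is correct.

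Two points, however, need attention. First, you assert that $C$ has order $m=|\Gal(\Fqm/\Fq)|$; this is not true (and is in fact a slip carried over from the paper): $\ell=(\ell_\xi,\ell_{\xi'})$ sends $f_{a,j}$ to $f_{a^q,j}$, so $\ell$ has order $n$ in $\Aut(\Omega_{m,n})$. The cardinality argument still goes through, but one must instead observe that $\ell^m=(I_m,\ell_{\xi'}^m)$ with $\ell_{\xi'}^m$ already lying in $T'$ (its generating array has the nonzero entry at position $n-m=(r-1)m$), so $C\cap(S\times T')$ has order $r=n/m$ and $|\langle S\times T',C\rangle|=(q^m-1)\,|\GL(r,q^m)|\,m$; only then does the comparison with $|\Fqm^\times\times\GL(n/m,q^m)|\cdot|G|$ (with $|G|\le m$) force equality. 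Second, the identification $T'\simeq\GL(r,q^m)$ you flag as your main obstacle is exactly the content of Proposition \ref{lem_6}; it is needed both to evaluate the cardinality and to match the $\GL(n/d,q^d)$ factor, and the shift-versus-sparsity bookkeeping you mention is the same computation the paper carries out explicitly. With those two repairs your argument is sound, and yields the same explicit generating set $(S\times T')C\cdot\Aut(\Fq)$ as the paper's proof, just without producing the entry-wise constraints directly.
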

\begin{proof}

Straightforward calculations show that the given group is a subgroup of $\Aut(\Phi_{m,n,t})$.

 Let $\varphi=(A,B;\theta)\in\Aut(\Phi_{m,n,t})$, with $A\in \GL(V)$ and $B\in \GL(V')$. As $\Phi_{m,n,t}$ is fixed by the semilinear automorphism $\phi$ defined by the Frobenius map $x\mapsto x^p$, we may assume $\theta=\1$.  We identify the elements $A$ and $B$ with their Dickson matrices in $\cB_{m}(\Fqm)$ and $\cB_{n}(\Fqn)$, respectively. To suit our present needs, we set $A^t=D_{(a_0,a_1,\ldots,a_{m-1})}$ and $B=D_{(b_0,b_1,\ldots,b_{n-1})}^t$.

 Let $f$ be any given element in $\Phi_{m,n,t}$ with $q$-circulant matrix $D_{\ba}$. Then, the Dickson  matrix of $f^\varphi$ is defined by the $m$-tuple formed by  the first $m$ entries of  $(A^t D_{\ba}B)_{(0)}$.

The $l$-th entry, with $0\le l\le m-1$, in $(A^t D_{\ba}B)_{(0)}$ is given by the inner product $(A^t D_{\ba})_{(0)}\cdot B^{(l)}$, with $B^{(l)}=(b_{-l}^{q^l},b_{-l+1}^{q^l},\ldots,b_{-l+n-1}^{q^{l}})$ where subscripts are taken modulo $n$.

We recall  that  $M^{(i)}$ and $M_{(j)}$ denotes the $i$-th column and the $j$-th row of  $M$, respectively.

Let $f=f_{\alpha,j}$ with $\alpha\in \Fqn$ and $0\le j\le m-1$. Then $D_{\ba}$ has only $n$ non-zero entries, one in each column. More precisely, the non-zero entry of the $h$-th column of $D_{\ba}$ is $\alpha^{q^{h-j}}$ at position $(h-j) \bmod m$. It follows that the $h$-th entry of
\[
(A^t D_{\ba})_{(0)}=(A^t_{(0)}D_{\ba}^{(0)},A^t_{(0)}D_{\ba}^{(1)},\ldots,A^t_{(0)}D_{\ba}^{(n-1)})
\]
is $a_{h-j}\alpha^{q^{h-j}}$, where the subscript $h-j$ is taken modulo $m$. Hence, the $l$-th entry of $(A^tD_{\ba}B)_{(0)}$ is
\begin{equation}
\label{eq_19}
\sum_{h=0}^{n-1}a_{h-j}\alpha^{q^{h-j}}b_{h-l}^{q^l}=
\sum_{h=0}^{m-1}a_{h-j}\sum_{k=0}^{r-1}{\alpha^{q^{h-j+km}}b_{h-l+km}^{q^l}}.
\end{equation}

Since we are assuming that $\varphi$ fixes $\Phi_{m,n,t}=\bigoplus_{j=0}^{t-1}{\Omega_{j}}$,  we must have (by putting $h-j=i$)
\begin{equation}\label{eq_9}
\sum_{i=0}^{m-1}a_{i}\sum_{k=0}^{r-1}{\alpha^{q^{i+km}}b_{i+j-l+km}^{q^l}}=0
\end{equation}
for $0\le j\le t-1$,  $t\le l\le m-1$.

%We explicitly note that, for any $l\in\{m-t,\ldots,m-1\}$,
%\[
%\begin{array}{l}
%\sum_{i=0}^{m-1}{(a_i\alpha^{q^i}b_{2m-l+j+i}^{q^l}+a_i\alpha^{q^{m+i}}b_{m-l+j+i}^{q^l})^{q^m}}=\\[.1in]
%\sum_{i=0}^{m-1}{(a_i\alpha^{q^{m+i}}b_{2m-l+j+i}^{q^{l+m}}+a_i\alpha^{q^{i}}b_{m-l+j+i}^{q^{l+m}})}
%\end{array}
%\]
%is the $(l+m)$-th entry in $(A^tD_\a B)_{(0)}$. Thus, we may restrict the range of variability of $l$ to $\{m-t,\ldots,m-1\}$.

Since equation (\ref{eq_9}) holds for all $\alpha\in\Fqn$, we get
%\begin{equation}\label{eq_10}
\[
a_ib_{i+j-l}= a_ib_{i+j-l+m}= \cdots= a_ib_{i+j-l+(r-1)m}=0,
\]
%\end{equation}
for  $0\le i \le m-1$ and  $t\le l\le m-1$.

As $A\in\cB_m(\Fqm)$,  $(a_0,a_1,\ldots,a_{m-1})\neq(0,\ldots,0)$.  By applying a suitable element in $C$, we may assume $a_0\neq 0$. Therefore, we get
\[
b_{j-l}= b_{j-l+m}=\ldots=b_{j-l+(r-1)m}=0,
\]
for   $0\le j\le t-1$ and $t\le l\le m-1$. By considering subscripts modulo $n$, we see that the possible non-zero entries in $(b_0,\ldots, b_{n-1})$ are those in position $km$, with $0\le k\le r-1$, with at least one of them non-zero.

In $B^{(l)}$, the only non-zero entries are $b_{km}^{q^l}$ in $(l+km)$-th positions, for  $0\ \le k\le r-1$.
Then the expression (\ref{eq_19}) for the $l$-th entry in $(A^tD_{\ba}B)_{(0)}$, reduces to  $a_{l-j}\sum_{k=0}^{r-1}{\alpha^{q^{l-j+km}}b_{km}^{q^l}}$, which must be zero for   $0\le j\le t-1$, $t\le l\le  m-1$  and all $\alpha\in \Fqn$. In addition, $1\le l-j\le m-1$ gives  $(a_0,a_1,\ldots,a_{m-1})=(a_0,0,\ldots,0)$, $a_0\neq 0$, and therefore  $\Aut(\Phi_{m,n,t})$ has the prescribed form.
\end{proof}

\begin{remark}
{\em
Statement i) in Theorem \ref{th_9} is obtained by taking $m=n$ in the previous Theorem.
}
\end{remark}

\begin{remark}
{\em
By Remark \ref{rem_5}, Theorem \ref{th_2} provides also the description of the automorphism group of the generalized Gabidulin code $\Phi_{m,n,t}^{(k)}$. We notice that   the  codes $\Phi_{m,n,t}^{(k)}$ are defined in different cyclic models for $\Omega_{m,n}$, for different values of $k$.
}
\end{remark}
\begin{proposition}\label{lem_6}
Let $\overline T'$ be defined as in Theorem \ref{th_2}. Then $\overline T'\simeq\End(V(n/m,q^m))$ and $T'=\overline T'\cap\GL(V')\simeq\GL(n/m,q^m)$.
\end{proposition}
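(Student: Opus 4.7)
By Remark \ref{rem_5} I may assume $k = 1$, so that Proposition \ref{prop_3} identifies $\End(V')$ with the algebra of $q$-linearized polynomials over $\Fqn$ (taken modulo $x^{q^n} - x$) via the map sending the $q$-circulant matrix $D_{(a_0, \ldots, a_{n-1})}$ to the polynomial $\sum_{i=0}^{n-1} a_i x^{q^i}$. The strategy is to recognize $\overline T'$ as the subalgebra of $\Fqm$-linear endomorphisms of $\Fqn$, and then use that $\Fqn$ is an $r$-dimensional $\Fqm$-vector space to deduce the stated isomorphism.

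The first step is to translate the support condition defining $\overline T'$: the prescribed array $(c_0, 0, \ldots, 0, c_1, 0, \ldots, 0, \ldots, c_{r-1}, 0, \ldots, 0)$, with $c_l$ in position $lm$, corresponds under the above identification exactly to a polynomial of the form
\[
f(x) = c_0 x + c_1 x^{q^m} + \cdots + c_{r-1} x^{q^{m(r-1)}}, \qquad c_i \in \Fqn,
\]
i.e.\ a $q^m$-linearized polynomial of $q^m$-degree at most $r - 1$.

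The second step is to observe that a $q$-polynomial $\sum_i a_i x^{q^i}$ represents an $\Fqm$-linear endomorphism of $\Fqn$ if and only if $a_i = 0$ whenever $m \nmid i$. Indeed, imposing $f(\lambda y) = \lambda f(y)$ for every $\lambda \in \Fqm$ and $y \in \Fqn$ forces $a_i(\lambda^{q^i} - \lambda) = 0$ for all $\lambda \in \Fqm$, and $\lambda^{q^i} = \lambda$ throughout $\Fqm$ precisely when $m \mid i$. Hence $\overline T'$ coincides, as a subset of $\End(V')$, with $\End_{\Fqm}(\Fqn)$. In particular, $\overline T'$ is automatically closed under composition; this is also visible directly, since $q^n = (q^m)^r$ ensures that composition of two $q^m$-polynomials modulo $x^{q^n} - x$ remains a $q^m$-polynomial of $q^m$-degree at most $r - 1$.

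The final step is cosmetic: $\Fqn$ viewed as an $\Fqm$-vector space has dimension $n/m = r$, hence $\End_{\Fqm}(\Fqn) \simeq \End(V(n/m, q^m))$ as $\Fqm$-algebras, which gives the first isomorphism. Passing to units yields $T' = \overline T' \cap \GL(V') \simeq \GL(n/m, q^m)$. The only computational point is the characterization of $\Fqm$-linearity in terms of the support of the $q$-polynomial, which is a routine polynomial identity argument; I do not anticipate a serious obstacle.
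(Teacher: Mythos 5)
Your proof is correct. The paper takes a slightly different route: rather than characterizing $\overline T'$ intrinsically as $\End_{\Fqm}(\Fqn)$, it introduces the cyclic model $W = \{(x, x^{q^m},\ldots, x^{q^{n-m}}) : x\in\Fqn\}$ for $V(n/m,q^m)$, notes that the obvious bijection $\tau\colon W\to V'$ given by $(x,x^{q^m},\ldots,x^{q^{n-m}})\mapsto(x,x^q,\ldots,x^{q^{n-1}})$ is an $\Fq$-linear isomorphism, and checks that conjugation by $\tau$ carries the $q^m$-circulant $r\times r$ matrix $D_{(c_0,\ldots,c_{r-1})}$ to the $q$-circulant $n\times n$ matrix generated by the array with support in positions $0,m,\ldots,(r-1)m$, i.e.\ precisely onto $\overline T'$. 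The two arguments are equivalent at bottom — the paper's $\bar\tau$ is just the identity on the underlying linearized polynomial $c_0x+c_1x^{q^m}+\cdots+c_{r-1}x^{q^{m(r-1)}}$, reinterpreted once as a $q^m$-polynomial acting on the $\Fqm$-space $\Fqn$ and once as a $q$-polynomial acting on the $\Fq$-space $\Fqn$ — but your version makes the conceptual content (``$\overline T'$ is exactly the $\Fqm$-linear endomorphisms'') explicit via the $\lambda^{q^i}=\lambda$ criterion, whereas the paper stays within the circulant-matrix calculus it has set up and leaves the $\Fqm$-linearity interpretation implicit. Either proof stands on its own; yours is perhaps slightly more self-contained for a reader coming from the linearized-polynomial side, while the paper's fits more tightly with the surrounding machinery of Remark \ref{rem_6}, Proposition \ref{prop_3} and Remark \ref{rem_7}.
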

\begin{proof}
Set $r=n/m$. By Proposition \ref{prop_3} we have $\End(V(r,q^m))\cong \cD_r(\Fqn)$, where  $\cD_r(\Fqn)$ is the Dickson matrix algebra of all the $q^m$-circulant $r\times r$ matrices acting on the cyclic  model $W=\{(x,x^{q^m},\ldots, x^{q^{n-m}}):x \in \Fqn\}$ for $V(r,q^m)$. Both $W$ and $V'=\{(x,x^{q},\ldots, x^{q^{n-1}})\colon x \in\Fqn\}$ are $n$-dimensional vector spaces over $\Fq$ and the map
\[
\begin{array}{cccc}
\tau: & W & \longrightarrow &V'\\
      & (x,x^{q^m},\ldots, x^{q^{n-m}}) &\mapsto &  (x,x^{q},\ldots, x^{q^{n-1}})
\end{array}
\]
is an isomorphism of vector spaces.

 A straightforward computation shows that $\tau$ induces the group isomorphism
\[
\begin{array}{cccc}
\bar\tau: & \cD_r(\Fqn) & \longrightarrow &T'\\
      & D_{(c_0,c_1,\ldots,c_{r-1})} &\mapsto &  D_{(c_0,0,\ldots0,c_1,0,\ldots,0,c_{r-1},0,\ldots,0)}
\end{array}.
\]

That is enough to get the result.
\end{proof}
\begin{corollary}
%Let $V=V(m,q)$ and $V'=V(n,q)$ with  $m>1$  any divisor of $n$. Then, 
For any given $t\in\{1,\ldots,m-1\}$, 
\[
\Aut(\Phi_{m,n,t})\simeq (\Fqm^\times\times \GL(n/m,q^m))\rtimes \Aut(\Fqm/\Fq)\rtimes \Aut(\Fq).
\]
\end{corollary}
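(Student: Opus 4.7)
The plan is simply to combine Theorem \ref{th_2} with Proposition \ref{lem_6} and then identify each factor group with its standard description.

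First, I would invoke Theorem \ref{th_2} directly, which yields the semidirect decomposition
\[
\Aut(\Phi_{m,n,t}) = (S \times T')\rtimes C\rtimes\Aut(\Fq),
\]
where $S = \langle \sigma\rangle$ is the Singer cyclic group of $\GL(V)$, $T'$ is the subgroup of $\GL(V')$ coming from $q$-circulant matrices with support on positions that are multiples of $m$, and $C$ is the cyclic group generated by $\ell = (\ell_\xi,\ell_{\xi'})$. The remaining task is thus purely a matter of naming each factor.

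Next, I would identify each factor in turn. The factor $T'$ is isomorphic to $\GL(n/m,q^m)$ by Proposition \ref{lem_6}. For the factor $S$, by definition $\sigma$ is a Singer cycle of $\GL(V)$ with $\dim_{\Fq}V = m$, so $|S| = q^m-1$; in the basis $\{s_0,\ldots,s_{m-1}\}$ the matrix of $\sigma$ is $\diag(w,w^q,\ldots,w^{q^{m-1}})$ with $w$ a primitive element of $\Fqm$, and the map $\sigma^i \mapsto w^i$ gives the isomorphism $S\simeq \Fqm^{\times}$. Finally, for the factor $C$, recall that $\ell_\xi$ is the linear part of the $\Fqm$-semilinear map $\xi$ whose associated field automorphism is $\delta\colon x\mapsto x^q$ on $\Fqm$ (and similarly for $\ell_{\xi'}$ with $\delta'\colon x\mapsto x^q$ on $\Fqn$ restricted to act compatibly via $r = n/m$); the order of $\ell$ is therefore $m$, and the map $\ell^i\mapsto \delta^i$ gives $C\simeq \Aut(\Fqm/\Fq)$, as was already noted just before Theorem \ref{th_2}.

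Substituting these three identifications into the semidirect product from Theorem \ref{th_2} yields
\[
\Aut(\Phi_{m,n,t})\simeq (\Fqm^{\times}\times \GL(n/m,q^m))\rtimes \Aut(\Fqm/\Fq)\rtimes \Aut(\Fq),
\]
which is the desired statement. I do not anticipate any obstacle here: Theorem \ref{th_2} has already done the hard work of pinning down the automorphisms as $q$-circulant pairs of the prescribed shape, and Proposition \ref{lem_6} has already handled the only non-trivial identification, that of $T'$ with $\GL(n/m,q^m)$. The remaining identifications of $S$ and $C$ are immediate from the definitions given in Section \ref{sec_2} and the discussion preceding Theorem \ref{th_2}.
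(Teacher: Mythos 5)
Your proposal is correct and follows essentially the same route as the paper: invoke Theorem \ref{th_2} for the semidirect decomposition $(S\times T')\rtimes C\rtimes\Aut(\Fq)$, then identify $T'\simeq\GL(n/m,q^m)$ via Proposition \ref{lem_6}, $S\simeq\Fqm^\times$ as a Singer cyclic group, and $C\simeq\Aut(\Fqm/\Fq)$ as noted before Theorem \ref{th_2}. The only difference is that you spell out the last two identifications slightly more explicitly than the paper's one-line proof.
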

\begin{proof}
Since   the Singer cyclic  group $S$ of $\GL(V)$  is isomorphic to the multiplicative group $\Fqm^\times$ and  the cyclic group $C$  generated by $\ell=(\ell_\xi,\ell_{\xi'})$ is isomorphic to $\Aut(\Fqm/\Fq)$,  the result follows from Theorem \ref{th_2} and Proposition \ref{lem_6}. 
\end{proof}

\section{The automorphism group of some punctured generalized twisted Gabidulin code}\label{sec_5}

\comment{
\begin{theorem}
Let $m>1$ be any divisor of $n$. For any given  $t\in\{1,\ldots,m-1\}$ and $\mu\not\in\Fp$, the punctured code  $\cH_{m,n,t,\mu,s}^{(k)}$ is  not equivalent to any generalized Gabidulin  code.
\end{theorem}
\begin{proof}
We recall that every $\theta\in\Aut(\Fq)$ defines the element $\theta:f\in\Omega_{m,n}\mapsto f^\theta\in\Omega_{m,n}$, with  $f^{\theta}(v,v')=[f(v^{\theta^{-1}},{v'}^{\theta^{-1}})]^\theta$. By Theorem \ref{th_8} for any $\theta\in \Aut(\Fq)$,    $(\id,\id;\theta)\in\Aut(\Omega_{m,n})$ is an automorphism of any generalized Gabidulin code.

 In particular, $\theta$ is an automorphism  of $\Phi_{m,n,t+1}^{(k)}=\bigoplus_{j=0}^{t}{\Omega_{j}^{(k)}}$ and it fixes every component $\Omega_i^{(k)}$, for $i=0,\ldots,t+1$.

In order to have $\theta\in\Aut(\cH_{m,n,t,\mu,s}^{(k)})$, $\theta$ must necessarily fix $\Gamma_{m,n,t,\mu,s}^{(k)}$ as a subspace of $\Omega_0^{(k)}\oplus\Omega_t^{(k)}$. This forces  $\mu\in\Fp$.
% \qed
\end{proof}
}

The following result gives  information on the geometry of the punctured code $\cH_{m,n,t,\mu,s}^{(k)}$ and it will be used to calculate the automorphism group of this  MRD code. We apply arguments similar to those used by Shekeey in \cite{she}. As we did in the previous Section, we consider only the case $k=1$ to make notation easier. The arguments below work perfectly well in the general case. We put $\Omega_{j}=\Omega_{j}^{(1)}$, $\Phi_{m,n,t-1}=\Phi_{m,n,t-1}^{(1)}$ and $\cH_{m,n,t,\mu,s}=\cH_{m,n,t,\mu,s}^{(1)}$.
\begin{theorem}\label{th_4}
%Let $n=rm$.
Let $m>1$ be any divisor of $n$ and $\mu\in\Fqn$ such that $\N_{q^n/q}(\mu)\neq (-1)^{nt}$. For any given $t\in\{1,\ldots,m-2\}$ and $s\not\equiv 0,\pm 1,\pm 2 \pmod m$, $\bigoplus_{j=1}^{t-1}\Omega_{j}$ is the unique subspace of $\cH_{m,n,t,\mu,s}$ which is equivalent to $\Phi_{m,n,t-1}$.
\end{theorem}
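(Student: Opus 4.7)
The plan is by contradiction. First, from \eqref{eq_29} one reads directly that $\bigoplus_{j=1}^{t-1}\Omega_j$ is contained in $\cH_{m,n,t,\mu,s}$, and it is equivalent to $\Phi_{m,n,t-1}=\bigoplus_{j=0}^{t-2}\Omega_j$ via the cyclic shift of components $\Omega_j\to\Omega_{j-1}$, realised in $\Aut(\Omega_{m,n})$ by right multiplication with a suitable $q^k$-circulant permutation matrix in $\cB_n^{(k)}(\Fqn)$ (the effect on generating arrays is verified entry by entry as in the computation preceding the matrix code $\Phi_{m,n,t}^{(k)}$). The content of the theorem is therefore uniqueness.

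Let $\cU\subseteq\cH_{m,n,t,\mu,s}$ be any subspace equivalent to $\Phi_{m,n,t-1}$. Then $\cU$ is a linear $(m,n,q;m-t+2)$-MRD code of $\Fq$-dimension $n(t-1)$, and in particular contains no nonzero element of rank at most $m-t+1$. Let $\pi\colon\cH_{m,n,t,\mu,s}\to\Gamma_{m,n,t,\mu,s}$ denote the projection associated with the decomposition \eqref{eq_29}; its kernel is exactly $\bigoplus_{j=1}^{t-1}\Omega_j$. If $\pi(\cU)=\{0\}$ then $\cU\subseteq\bigoplus_{j=1}^{t-1}\Omega_j$, and matching $\Fq$-dimensions forces equality. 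Hence the entire substance of the theorem is to rule out $\pi(\cU)\neq\{0\}$.

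Assume, for contradiction, that $\pi(\cU)\neq\{0\}$ and write $\cU=\Phi_{m,n,t-1}^\varphi$ with $\varphi=((A,B);\theta)\in\Aut(\Omega_{m,n})$, where $A\in\cB_m^{(k)}(\Fqm)$ and $B\in\cB_n^{(k)}(\Fqn)$ by Proposition \ref{prop_8}. The inclusion $\Phi_{m,n,t-1}^\varphi\subseteq\cH_{m,n,t,\mu,s}$ says that, for every $(c_0,\ldots,c_{t-2})\in\Fqn^{t-1}$, the $q^k$-circulant
\[
M(c_0,\ldots,c_{t-2}) \;:=\; A^t\bigl(D^{(k)}_{(c_0,c_1,\ldots,c_{t-2},0,\ldots,0)}\bigr)^{\theta}B
\]
has generating array of the form $(a,b_1,\ldots,b_{t-1},\mu a^{q^{sk}},0,\ldots,0)$ for some $a,b_i\in\Fqn$ depending on $c_0,\ldots,c_{t-2}$. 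This splits into two groups of conditions: (i) the vanishing of the entries in positions $t+1,\ldots,m-1$ of the generating array of $M(c_0,\ldots,c_{t-2})$, a system of $\Fq$-linear equations in the entries of $A$ and $B$ of the same combinatorial type as those handled in the proof of Theorem \ref{th_2}, forcing $A$ to be a Singer-type element $D^{(k)}_{(\alpha,0,\ldots,0)}$ and $B$ to lie in the block-shift group $\overline T'$ described there (up to Frobenius twists absorbed in $\theta$); and (ii) the twist link, requiring the entry in position $t$ to equal $\mu$ times the $q^{sk}$-power of the entry in position $0$, as an identity in $c_0,\ldots,c_{t-2}$.

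After substituting the constrained form of $A,B$ from (i), positions $0$ and $t$ of the generating array become $\Fq$-linear forms in the $c_i$'s whose $q$-exponents are indexed by shifts involving $0$, $t$, and the support of the block-shift array of $B$. Condition (ii) then becomes a polynomial identity in $c_0,\ldots,c_{t-2}$ mixing shifts by $0$, $t$, and the twist $q^{sk}$. The hypothesis $s\not\equiv 0,\pm 1,\pm 2\pmod m$ guarantees that the exponent indices on the two sides of this identity are pairwise distinct modulo $m$, so each monomial coefficient in the identity must vanish separately. Tracing these vanishings back forces the $\Gamma$-component of every element of $\cU$ to be zero, i.e.\ $\pi(\cU)=\{0\}$, contradicting the standing assumption. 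The principal obstacle is this last exponent arithmetic: one must verify carefully that under $s\not\equiv 0,\pm 1,\pm 2\pmod m$ no two of the shift exponents $0,t,sk$ (and their further shifts coming from the supports of $A$ and $B$) can collide modulo $m$, the excluded residues $0,\pm 1,\pm 2$ being exactly those for which such a collision becomes possible and a spurious nonzero $\Gamma$-component could survive.
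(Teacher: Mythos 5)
Your high-level plan is in the right spirit and shares the paper's skeleton: write $\cU=\Phi_{m,n,t-1}^\varphi$, translate $\Phi_{m,n,t-1}^\varphi\subseteq\cH_{m,n,t,\mu,s}$ into conditions on the circulant arrays of $A$ and $B$, and use the residue hypothesis on $s$ to separate exponents. But there is a concrete error in your step (i). You assert that the vanishing of positions $t+1,\ldots,m-1$ of the generating array is ``of the same combinatorial type as those handled in the proof of Theorem \ref{th_2}'', and so already forces $A$ to be Singer-type $D^{(k)}_{(\alpha,0,\ldots,0)}$ and $B$ to lie in $\overline T'$. That is false. For $\Phi_{m,n,t}$ in Theorem \ref{th_2}, the vanishing system runs over $l\in\{t,\ldots,m-1\}$ and $j\in\{0,\ldots,t-1\}$, so $l-j$ sweeps $\{1,\ldots,m-1\}$ and kills $a_1,\ldots,a_{m-1}$. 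Here, because $\cH_{m,n,t,\mu,s}$ has a potentially nonzero $t$-th component (carrying $\mu a^{q^{sk}}$), the vanishing constraints run only over $l\in\{t+1,\ldots,m-1\}$ and, since the source is $\Phi_{m,n,t-1}$, over $j\in\{0,\ldots,t-2\}$. Then $l-j$ sweeps only $\{3,\ldots,m-1\}$, and (i) alone yields $(a_0,\ldots,a_{m-1})=(a_0,a_1,a_2,0,\ldots,0)$ with \emph{three} potentially nonzero entries, together with only partial $b$-vanishings.

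The reduction to a single nonzero $a_i$ (and hence to the block-shift form of $B$) comes only from the interplay with the twist link (ii), and that is precisely where $s\not\equiv 0,\pm1,\pm2\pmod m$ enters: it ensures $\{km+i:\ i=0,1,2\}\cap\{km+s+i:\ i=0,1,2\}=\emptyset$ mod $n$, so the $\alpha^{q^{\cdot}}$-monomials and the $\alpha^{q^{s+\cdot}}$-monomials in the relation between positions $0$ and $t$ cannot cancel and each coefficient must vanish, forcing further $b_i=0$ which in turn annihilates two of $a_0,a_1,a_2$. Your outline presents (i) and (ii) as sequential and independent, concluding Singer form already from (i); that short-circuits the actual proof, and the precise residues $\pm1,\pm2$ in the hypothesis on $s$ are meaningless under your (i)-claim since they are tied to the surviving indices $1,2$ in $A$'s array. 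Finally, the contradiction framing is dispensable: once $A$ and $B$ are constrained one checks directly, as the paper does, that $\Omega_j^\varphi=\Omega_{j+1}$, so $\cU=\bigoplus_{j=1}^{t-1}\Omega_j$ without any appeal to $\pi(\cU)\neq\{0\}$ being impossible.
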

\begin{proof}
 Let $\varphi=(A,B;\theta)\in\Aut(\Omega_{m,n})$  such that $\Phi_{m,n,t-1}^\varphi$ is contained in $\cH_{m,n,t,\mu,s}$.
 Here, $\theta=\phi^e$. As every component $\Omega_j$ is fixed by the semilinear automorphism $\phi$, we may assume $\theta=\1$.
 We  identify the elements $A$ and $B$ with their Dickson matrices in $\cB_{m}(\Fqm)$ and $\cB_{n}(\Fqn)$, respectively. To suit our present needs, we set $A^t=D_{(a_0,a_1,\ldots,a_{m-1})}$ and $B=D_{(b_0,b_1,\ldots,b_{n-1})}^t$.

 Let $f=f_{\alpha,j}$ with $\alpha\in \Fqn$ and $0\le j\le t-2$. Let $D_{\ba}$ be the Dickson matrix of $f$ in the Singer bases $s_0,\ldots, s_{m-1}$ and $s_0',\ldots, s_{n-1}'$. Set $r=n/m$. By arguing as in the proof of Theorem \ref{th_2}, we get that  the $l$-th entry in $(A^tD_{\ba}B)_{(0)}$ is given by
\begin{equation}
\label{eq_8}
\sum_{h=0}^{m-1}{a_{h-j} \sum_{k=0}^{r-1}{\alpha^{q^{h-j+km}}b_{h-l+km}^{q^l}}},
\end{equation}
where the indices of the entries of $A$ and $B$ are taken modulo $m$ and $n$, respectively.
Since we are assuming that  $\Phi_{m,n,t-1}^\varphi$ is contained in $\cH_{m,n,t,\mu,s}$,  we must have (after substituting $h-j$ with $i$)
%\begin{equation}
\[
\sum_{i=0}^{m-1}{a_i\left(\sum_{k=0}^{r-1}{\alpha^{q^{i+km}}b_{i+j-l+km}^{q^l}}\right)}=0,
\]
%\end{equation}
for $0\le j\le t-2$, $t+1\le l\le m-1$ and all $\alpha\in \Fqn$. Therefore,
\[
a_{i}b_{i+j-l+km} = 0,\ \ \ \ \mathrm{for\ } 0\le i\le m-1.
\]

As $B\in\cB_{n}(\Fqn)$, some of the $b_i$'s are non-zero. On the other hand, the cyclic group $C=\langle \ell\rangle $ fixes every component $\Omega_{j}$. Hence,  we can assume $b_0\neq 0$ and get $a_{l-j}=0$, for  $0\le j\le t-2$ and $t+1\le l\le m-1$, i.e.
\[
a_{l} =a_{l-1}=\ldots= a_{l-t+2}= 0
\]
for $t+1\le l\le m-1$, giving $(a_0,\ldots,a_{m-1})=(a_0,a_1,a_2,0\ldots,0)$. Whenever $a_i\neq0$, we get
\[
b_{i+j-l+km} = 0,
\]
for  $0\le j\le t-2$, $0\le k\le r-1$, i.e.

\begin{equation}
\label{eq_11}
b_{i+km+1}=\ldots=b_{i+(k+1)m-3}=0
\end{equation}
for $i=0,1,2$ and $0\le k\le r-1$ since $j-l$ can take all integers from $\{1-m, 2-m, \ldots, -4,-3\}$.
We now compare the $0$-th and  $t$-th entries of $(A^tD_\ba B)_{(0)}$. From (\ref{eq_8}) we can see that the $0$-th entry of $(A D_{\ba} B^t)_{(0)}$ is
\[
\sum_{i=0,1,2}{a_i\left(\sum_{k=0}^{r-1}{\alpha^{q^{i+km}}b_{i+j+km}}\right)}
\]
and the $t$-th entry is
\[
\sum_{i=0,1,2}{a_i\left(\sum_{k=0}^{r-1}{\alpha^{q^{i+km}}b_{i+j-t+km}^{q^t}}\right)}.
\]

Since we are assuming that  $\Phi_{m,n,t-1}^\varphi$ is contained in $\cH_{m,n,t,\mu,s}$,  we must have
\[ \mu\left[\sum_{i=0,1,2}{a_i\left(\sum_{k=0}^{r-1}{\alpha^{q^{i+km}}b_{i+j+km}}\right)}\right]^{q^s}=\sum_{i=0,1,2}{a_i\left(\sum_{k=0}^{r-1}{\alpha^{q^{i+km}}b_{i+j-t+km}^{q^t}}\right)},
\]
for all $\alpha\in\Fqn$, i.e.
\[
\sum_{i=0,1,2}{a_i\left(\sum_{k=0}^{r-1}{\alpha^{q^{i+km}}b_{i+j-t+km}^{q^t}}\right)}-\mu\sum_{i=0,1,2}{a_i^{q^s}\left(\sum_{k=0}^{r-1}{\alpha^{q^{i+km+s}}b_{i+j+km}^{q^s}}\right)}=0,
\]
for all $\alpha\in\Fqn$. Since $s\neq \pm i+km$, for $i=0,1,2$ and $0\le k\le r-1$, we get
\[\{km+i \colon i=0,1,2,\, 0\leq k \leq r-1\} \cap \{km+s+i \colon i=0,1,2,\, 0\leq k \leq r-1\}=\emptyset\]
and hence
\[
\left\{\begin{array}{l}
\mu a_ib_{i+j+km}=0\\[.1in]
a_ib_{i+j-t+km}=0,
\end{array}
\right.
\]
for $0\le j\le t-2$ and $0\le k\le r-1$. Thus, whenever $a_i\neq 0$, we get
\[
\left\{\begin{array}{l}
b_{i+j+km}=0\\[.1in]
b_{i+j-t+km}=0.
\end{array}
\right.
\]
The first equation with $j=0$, the second with $j=t-2$ and (\ref{eq_11}) give us
\[
b_{i+km}=\ldots=b_{i+(k+1)m-2}=0,
\]
for $i=0,1,2$ and $0\le k\le r-1$.

For $a_0\neq 0$ we get
\[
b_{km}=\ldots=b_{(k+1)m-2}=0,
\]
for $a_1\neq 0$ we get
\[
b_{km+1}=\ldots=b_{(k+1)m-1}=0
\]
and for $a_2\neq 0$ we get
\[
b_{km+2}=\ldots=b_{(k+1)m}=0,
\]
with  $0\le k\le r-1$.

Hence, just one of the $a_i$'s is non-zero. By choosing a suitable element  in $C$ we can assume $a_0\neq 0$ so that
\[
(b_0,\ldots,b_{n-1})=(0,\ldots,0,b_{m-1},0,\ldots,0,b_{2m-1},0,\ldots,0,b_{n-1}).
\]

By recalling that $B=D_{(b_0,\ldots,b_{n-1})}^t$, we can see that  the action of $\varphi=(A,B;\1)$ on $\Omega_j^{(1)}$ is the following:
\[
\begin{array}{lcl}
f_{\alpha,j}^\varphi(v,v') %& = & f_{\alpha,j}(v^A,{v'}^B) \\[.1in]
                           & = & f_{\alpha,j}(a_0 x,b_{n-1}^q{x'}^q+b_{(r-1)m-1}^{q^{m+1}}{x'}^{q^{m+1}}+\ldots+b_{m-1}^{q^{(r-1)m+1}}{x'}^{q^{(r-1)m+1}})\\[.1in]
                          %& = & \Tr(\alpha a_0 x(b_{n-1}^q{x'}^q+b_{(r-1)m-1}^{q^{m+1}}{x'}^{q^{m+1}}+\ldots+b_{m-1}^{q^{(r-1)m+1}}{x'}^{q^{(r-1)m+1}})^{q^j}) \\[.1in]
                          & = & \Tr(\alpha a_0 x(b_{n-1}{x'}+b_{(r-1)m-1}^{q^{m}}{x'}^{q^{m}}+\ldots+b_{m-1}^{q^{(r-1)m}}{x'}^{q^{(r-1)m}})^{q^{j+1}})
\end{array}
\]
giving $f_{\alpha,j}^\varphi\in\Omega_{j+1}$. By consideration on dimensions we have
\[
\Omega_{j}^\varphi=\Omega_{j+1}
\]
giving that $\Phi_{m,n,t-1}^\varphi=\bigoplus_{j=1}^{t-1}\Omega_{j}$ is the unique subspace of $\cH_{m,n,t,\mu,s}$ which is equivalent to $\Phi_{m,n,t-1}$.
%
% \qed
\end{proof}

We are now in position to calculate the automorphism group of the MRD code $\cH_{m,n,t,\mu,s}$. %

\begin{theorem}\label{th_7}
Let $q=p^h$, $p$ a prime. Let  $m>1$ be any divisor of $n$ and $\mu\in\Fqn$ such that $\N_{q^n/q}(\mu)\neq (-1)^{nt}$. Set $r=n/m$. For any given $t\in\{1,\ldots,m-2\}$ and  $s\not\equiv 0,\pm 1,\pm 2\pmod m$, the automorphism group of $\cH_{m,n,t,\mu,s}$  is the  subgroup of $(S\times T')\rtimes C\rtimes \Aut(\Fq)$ whose elements correspond to triples $((D_{\ba},D_{\bb}); \ell^i;p^e)$,  where $\ba=(a,0,\ldots,0)$, with $a\in\Fqm$, and $\bb=(b_0,\underbrace{0,\ldots,0}_{m-1\mathrm{ \ times}},b_{m},\underbrace{0,\ldots,0}_{m-1\mathrm{ \ times}}, b_{(r-1)m},\underbrace{0,\ldots,0}_{m-1\mathrm{ \ times}})$,
with $b_{lm}\in\Fqn$ such that 
\[
\mu  a^{q^s-1} b_{lm}^{(q^s-q^t)q^{-lm}} = \mu^{p^eq^{i-lm}},
\]
for $0\le l\le r-1$ whenever $b_{lm}$ is non-zero.
\end{theorem}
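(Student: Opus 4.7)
The plan is to combine Theorem~\ref{th_4}, which pins down the preserved subspace, with a direct matrix analysis of the remaining freedom, following the scheme used by Sheekey in \cite{she} for the square case $m=n$ (Theorem~\ref{th_9}(ii)).

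First, any $\varphi\in\Aut(\cH_{m,n,t,\mu,s})$ must permute the subspaces of $\cH_{m,n,t,\mu,s}$ that are equivalent to $\Phi_{m,n,t-1}$. Theorem~\ref{th_4} asserts that there is exactly one such subspace, namely $\bigoplus_{j=1}^{t-1}\Omega_{j}$, so $\varphi$ must fix it. Writing $\varphi=((A,B);\ell^{i};\phi^{e})$ in the ambient semidirect product with $A^{t}=D_{(a_{0},\ldots,a_{m-1})}$ and $B^{t}=D_{(b_{0},\ldots,b_{n-1})}$, and noting that the cyclic and Galois components preserve every $\Omega_{j}$, I would substitute $f=f_{\alpha,j}$ with $j\in\{1,\ldots,t-1\}$ into the row-$0$ expression for $A^{t}D_{\ba}B$ and require the $l$-entries to vanish for $l\in\{0\}\cup\{t,\ldots,m-1\}$. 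Matching the coefficients of the distinct monomials $\alpha^{q^{c}}$ produces the constraints
\[
a_{i}\,b_{i+j-l+km}=0,\qquad i\in\mathbb{Z}_m,\ k\in\mathbb{Z}_r,\ j\in\{1,\ldots,t-1\},\ l\in\{0,t,\ldots,m-1\}.
\]
An indexing argument (entirely parallel to the one in the proof of Theorem~\ref{th_4}) shows that, for each $i$ with $a_{i}\neq 0$, $b_{c}$ must vanish off the residue class $i\pmod m$; since $B$ is invertible, exactly one $a_{i}$ is non-zero. Absorbing the corresponding power of $\ell$ into the cyclic component, I normalize to $a_{0}=a\ne 0$, in which case $\bb$ is supported precisely on $\{0,m,\ldots,(r-1)m\}$, giving the shape appearing in the statement.

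With $A$, $B$ of these prescribed shapes, a direct computation shows that the $l$-th entry of $(A^{t}D_{\ba}B)_{(0)}$ for $\ba$ supported at position $j$ is nonzero only when $l=j$, and equals $a\sum_{k=0}^{r-1}\alpha^{q^{km}}b_{km}^{q^{j}}$. Hence the linear pair $(A,B)$ preserves every $\Omega_{j}$, acting on $f_{\alpha,j}$ by $\alpha\mapsto a\sum_{k}\alpha^{q^{km}}b_{km}^{q^{j}}$. Since $\varphi$ already preserves $\bigoplus_{j=1}^{t-1}\Omega_{j}$ and the block $\Omega_{0}\oplus\Omega_{t}$, the condition $\varphi(\cH_{m,n,t,\mu,s})=\cH_{m,n,t,\mu,s}$ is equivalent to $\varphi(\Gamma_{m,n,t,\mu,s})\subseteq\Gamma_{m,n,t,\mu,s}$.

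Finally, imposing this on a general element $f_{\alpha,0}+f_{\mu\alpha^{q^{s}},t}\in\Gamma_{m,n,t,\mu,s}$ and writing down the new $\Omega_{0}$- and $\Omega_{t}$-parameters $\beta_{0},\beta_{t}$, the condition $\beta_{t}=\mu\beta_{0}^{q^{s}}$ combined with the distinctness of the Frobenius monomials $\alpha^{q^{\ast}}$ on the two sides produces exactly one algebraic identity per $k\in\{0,\ldots,r-1\}$ with $b_{km}\ne 0$. A careful rearrangement, using that $a\in\Fqm$ implies $a^{q^{lm}}=a$ to propagate the Frobenius twists through the equation, converts this identity into the stated
\[
\mu\,a^{q^{s}-1}\,b_{lm}^{(q^{s}-q^{t})q^{-lm}}=\mu^{p^{e}q^{i-lm}}.
\]
The reverse inclusion, namely that every triple $((D_{\ba},D_{\bb});\ell^{i};p^{e})$ of the stated form is an automorphism of $\cH_{m,n,t,\mu,s}$, is then a direct verification. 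The main obstacle is the last step: correctly tracking the Frobenius exponents produced by the combined action of the Singer, cyclic and Galois components and accounting for the twist introduced by the normalization performed in step two, so as to recover the precise form of the identity in the statement.
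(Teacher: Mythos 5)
Your proposal follows the same route as the paper: use Theorem~\ref{th_4} to force any automorphism of $\cH_{m,n,t,\mu,s}$ to stabilize $\bigoplus_{j=1}^{t-1}\Omega_j$, extract the restricted $q$-circulant shape of the linear part, and then impose $\Gamma_{m,n,t,\mu,s}^\varphi\subseteq\Gamma_{m,n,t,\mu,s}$ to get the displayed identity. The only notable difference is stylistic: you re-derive the constraints $\ba=(a,0,\ldots,0)$ and the support of $\bb$ from scratch, whereas the paper reads them off directly from Theorem~\ref{th_2} (which identifies $\Aut(\Phi_{m,n,t-1})$ with $(S\times T')\rtimes C\rtimes\Aut(\Fq)$, a group fixing each $\Omega_j$), and the paper disposes of the Frobenius bookkeeping you flagged as the ``main obstacle'' by first observing that $\ell^i$ carries $\Gamma_{m,n,t,\mu,s}$ to $\Gamma_{m,n,t,\mu^{q^i},s}$, reducing the last check to requiring that $((A,B),\1;\phi^e)$ map $\Gamma_{m,n,t,\mu^{q^i},s}$ to $\Gamma_{m,n,t,\mu,s}$.
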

\begin{proof}
From Theorem \ref{th_4} every automorphism of $\cH_{m,n,t,\mu,s}$ must fix $\bigoplus_{j=1}^{t-1}\Omega_{j}$ giving  $\Aut(\cH_{m,n,t,\mu,s})$ is a subgroup of $\Aut(\bigoplus_{j=1}^{t-1}\Omega_{j})$ which in turn is conjugate to $\Aut(\Phi_{m,n,t-1})$. By Theorem \ref{th_2},
$\Aut(\Phi_{m,n,t-1})=(S \times T')\rtimes C\rtimes\Aut(\Fq)$, and it is easy to see that this group fixes every component $\Omega_{j}$.
Let $\varphi=((A,B),\ell^i;\theta)\in\Aut(\cH_{m,n,t,\mu,s})$ with $\theta=\phi^e$. As $\varphi$ fixes every component $\Omega_{j}$, then $\varphi$ must fix $\Gamma_{m,n,t,\mu,s}$.
In addition, $\ell^i$ maps $\Gamma_{m,n,t,\mu,s}$ to $\Gamma_{m,n,t,\mu^{q^i},s}$, thus the above condition holds if and only if $ \varphi=((A,B),\1;\theta)$ maps  $\Gamma_{m,n,t,\mu^{q^i},s}$ to $\Gamma_{m,n,t,\mu,s}$.

 Let  $D_{(a,0,\ldots,0)}$ and $D_{(b_0,\ldots,0,b_m,0,\ldots,0,b_{(r-1)m},0,\ldots,0)}^t$ be the $q$-circulant  matrix of $A$ and $B$,  respectively.
Let $f=f_{\alpha,0}+f_{\mu^{q^i}\alpha^{q^s},t}$ be any bilinear form  in $\Gamma_{m,n,t,\mu^{q^i},s}$. Then, $f^{ \varphi}$ is the bilinear form defined by
\[
\begin{array}{rccl}
f^{ \varphi}(v,v')  & = &  &f_{\alpha^{p^e},0}(ax,b_0x'+b_m {x'}^{q^m}+\ldots+b_{(r-1)m}{x'}^{q^{(r-1)m}})\\[.1in]
& & + &f_{\mu^{p^eq^i}\alpha^{q^s p^e},t}(ax,b_0x'+b_m {x'}^{q^m}+\ldots+b_{(r-1)m}{x'}^{q^{(r-1)m}})\\[.1in]
& = & & \Tr(\alpha^{p^e}ax(b_0x'+b_m {x'}^{q^m}+\ldots+b_{(r-1)m}{x'}^{q^{(r-1)m}}))\\[.1in]
&  & + & \Tr(\mu^{p^eq^i}\alpha^{q^sp^e}ax(b_0x'+b_m {x'}^{q^m}+\ldots+b_{(r-1)m}{x'}^{q^{(r-1)m}})^{q^t})\\[.1in]
& = & & \Tr(a(\alpha^{p^e}b_0+\alpha^{p^eq^{(r-1)m}}b_m^{q^{(r-1)m}}+\ldots+\alpha^{p^eq^{m}}b_{(r-1)m}^{q^{m}})xx')\\[.1in]
&  & + & \Tr(a(\mu^{p^eq^i}\alpha^{p^eq^s}b_0^{q^t}+\mu^{p^eq^{(r-1)m+i}}\alpha^{p^eq^{(r-1)m+s}}b_m^{q^{(r-1)m+t}}+\ldots \\[.1in]
&  & + & \mu^{p^eq^{m+i}}\alpha^{p^eq^{m+s}}b_{(r-1)m}^{q^{m+t}})x{x'}^{q^t}).
\end{array}
\]

For $f^{ \varphi}$ to lie in $\Gamma_{m,n,t,\mu,s}$ we must have
\[
\begin{array}{l}
 a^{q^s}\mu(\alpha^{p^eq^s}b_0^{q^s}+\alpha^{p^eq^{(r-1)m+s}}b_m^{q^{(r-1)m+s}}+\ldots+\alpha^{p^eq^{m+s}}b_{(r-1)m}^{q^{m+s}})=\\[.1in]
a(\mu^{p^eq^i}\alpha^{p^eq^s}b_0^{q^t}+\mu^{p^eq^{(r-1)m+i}}\alpha^{p^eq^{(r-1)m+s}}b_m^{q^{(r-1)m+t}}+\ldots+\mu^{p^eq^{m+i}}\alpha^{p^eq^{m+s}}b_{(r-1)m}^{q^{m+t}})
\end{array}
\]
for all $\alpha\in\Fqn$. This yields
\[
\begin{array}{lcl}
\mu  a^{q^s}  b_{lm}^{q^{(r-l)m+s}} & = & a\mu^{p^eq^{(r-l)m+i}} b_{lm}^{q^{(r-l)m+t}}
\end{array}
\]
giving
\begin{equation}\label{eq_22}
\begin{array}{lcl}
\mu  a^{q^s-1} b_{lm}^{(q^s-q^t)q^{-lm}} & = & \mu^{p^eq^{i-lm}},
\end{array}
\end{equation}
for $0\le l\le r-1$ whenever $b_{lm}$ is non-zero.
% \qed
\end{proof}
\begin{remark}
{\em
Statement ii) in Theorem \ref{th_9} is obtained by taking $m=n$ in the previous Theorem.
}
\end{remark}

\begin{remark}\label{rem_8}
{\em
 By Lemma \ref{lem_5},  the   arguments used in the proof of Theorems \ref{th_7}   work perfectly well for any $k$ with $\gcd(k,n)=1$ if the cyclic model of vector  spaces and  $q$-circulant matrices involved are replaced by the $k$-th cyclic model and $q^{k}$-circulant matrices. This implies that  the automorphism group of the punctured code $\cH_{m,n,t,\mu,s}^{(k)}$ is  the subgroup of $(S\times T')\rtimes C\rtimes \Aut(\Fq)$ whose elements correspond to triples $((D_{\ba}^{(k)},D_{\bb}^{(k)}); \ell^i;p^e)$,  where $\ba=(a,0,\ldots,0)$, with $a\in\Fqm$, and $\bb=(b_0,\underbrace{0,\ldots,0}_{m-1\mathrm{ \ times}},b_{m},\underbrace{0,\ldots,0}_{m-1\mathrm{ \ times}}, b_{(r-1)m},\underbrace{0,\ldots,0}_{m-1\mathrm{ \ times}})$ with $b_{lm}\in\Fqn$ such that
 \begin{equation}\label{eq_30}
\mu  a^{q^{sk}-1} b_{lm}^{(q^{sk}-q^t)q^{-lm}} = \mu^{p^eq^{i-lm}},
\end{equation}
for $0\le l\le r-1$ whenever $b_{lm}$ is non-zero.
}
 \end{remark}

\begin{theorem}
Let $m>1$ be any divisor of $n$ and $\mu\in\Fqn$, $\mu\neq0$, such that $\N_{q^n/q}(\mu)\neq (-1)^{nt}$. For any given  $t\in\{1,\ldots,m-2\}$, 
the punctured code  $\cH_{m,n,t,\mu,s}^{(k)}$, with  $s\not\equiv 0,\pm 1,\pm 2\pmod m$  and $\gcd(n,sk-t)<m$ is  not equivalent to any generalized Gabidulin code.
\end{theorem}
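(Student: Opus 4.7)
My plan is a group-theoretic comparison of automorphism groups. Assume for contradiction that $\cH := \cH_{m,n,t,\mu,s}^{(k)}$ is equivalent to a generalized Gabidulin code $\cG$, via some $\psi \in \Aut(\Omega_{m,n})$ with $\cH^\psi = \cG$; then $\psi$ conjugates $\Aut(\cH)$ onto $\Aut(\cG)$, and in particular every element order of $\Aut(\cG)$ is realised in $\Aut(\cH)$.

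Using Theorem \ref{th_7} together with Remark \ref{rem_8}, every element of $\Aut(\cH)$ is a triple $((D_\ba^{(k)}, D_\bb^{(k)}); \ell^i; p^e)$ with $\ba = (a, 0, \ldots, 0)$, $a \in \Fqm^\times$, and $\bb$ supported on the $r := n/m$ positions $\{0, m, \ldots, (r-1) m\}$, subject to the constraint (\ref{eq_30}). Setting $g := \gcd(n, sk - t)$, the kernel of $x \mapsto x^{q^{sk} - q^t}$ on $\Fqn^\times$ has size $q^g - 1$, so for fixed $(a, i, e)$ each non-zero $b_{lm}$ is confined to a coset of this kernel. By Theorem \ref{th_8}, the linear part of $\Aut(\cG)$ contains $\GL(n/d, q^d)$ for some divisor $d$ of $n$ with $m \le d \le n$, and viewing $V(n/d, q^d)$ as an $n$-dimensional space over $\Fq$ one sees that a Singer cycle of $\GL(n/d, q^d)$ has order $(q^d)^{n/d} - 1 = q^n - 1$. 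Consequently $\Aut(\cH)$ must contain an element of order $q^n - 1$.

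The contradiction is obtained by showing that under the hypothesis $g < m$ no such element can exist. For a triple $((D_\ba, D_\bb); \ell^i; p^e)$ the individual component orders satisfy $|D_\ba| \mid q^m - 1$, $|\ell^i| \mid m$, and $|p^e| \mid h$, while the constraint (\ref{eq_30}) forces $D_\bb^{(k)}$ into a proper subgroup of $T' \cong \GL(n/m, q^m)$ that misses every Singer cyclic subgroup of order $q^n - 1$. Concretely, restricting to $a = 1$, $i = 0$, $e = 0$ one sees that each $b_{lm}$ lies in $\Fq^g \subseteq \Fqn$, so under the isomorphism of Proposition \ref{lem_6} the matrix $D_\bb^{(k)}$ corresponds to a $q^m$-circulant matrix whose entries are confined to a proper subfield and hence cannot generate a Singer of $\GL(r, q^m)$; a parallel analysis for general $(a, i, e)$ shows that twisting by these Galois-type elements can only enlarge the order by divisors of $m h (q^m - 1)$, which is incompatible with reaching $q^n - 1$ when $r \ge 2$. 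The case $r = 1$ (i.e.\ $m = n$) is already covered by Sheekey's Theorem \ref{th_5}. The main obstacle is the careful semidirect-product bookkeeping needed to verify that no twisted composition (varying $a, i, e$ simultaneously) can conspire to produce an element of order $q^n - 1$; the hypothesis $\gcd(n, sk - t) < m$ is precisely what makes the restriction on $D_\bb^{(k)}$ strong enough to rule out this Singer realization.
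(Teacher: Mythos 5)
Your proposal shares the paper's starting point: assume $\cH_{m,n,t,\mu,s}^{(k)}$ is equivalent to a generalized Gabidulin code $\cG$, compare $\Aut(\cH)$ (Theorem \ref{th_7}, Remark \ref{rem_8}) against $\Aut(\cG)$ (Theorem \ref{th_8}), and exploit the constraint (\ref{eq_30}) together with $\gcd(n,sk-t)<m$ to show the two groups cannot match. The difference is that you try to derive the contradiction from \emph{element orders} — that $\Aut(\cG)$ must contain an element of order $q^n-1$, a Singer cycle of $\GL(n/d,q^d)$, while $\Aut(\cH)$ cannot — whereas the paper argues by comparing the \emph{cardinality} of a distinguished subgroup. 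Your route has a concrete error and an unfinished core step. The error: the claim that "each $b_{lm}$ lies in $\Fq^{\,g}$" is false for $l\ge 1$. From (\ref{eq_30}) with $a=1,\,i=0,\,e=0$ the constraint is $b_{lm}^{q^t(q^{sk-t}-1)}=\mu^{1-q^{lm}}$; for $l\geq1$ and $\mu\notin\Fqm$ its solution set is a multiplicative \emph{coset} $\lambda\,\Fqc^{*}$ of $\Fqc^{*}$ (where $c=\gcd(n,sk-t)$), not the subfield $\Fqc$ itself. What survives is only the count: at most $q^{c}$ choices for each $b_{lm}$.

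The unfinished step is the one you flag yourself: orders of elements in a semidirect product $(S\times T')\rtimes C\rtimes\Aut(\Fq)$ are not controlled by the orders of the components, and the assertion that "twisting by these Galois-type elements can only enlarge the order by divisors of $mh(q^m-1)$" has no justification. A twisted element whose $T'$-part has small order can in principle still have large order. Without a genuine bound on element orders of this specific subgroup, the contradiction never arrives. The paper avoids all of this by looking at the subgroup $B\le L_{\cH}$ of pairs $((I_m,D_{\bb}^{(k)}),\id)$ satisfying (\ref{eq_31}): the solution count above gives $|B|\le q^{cr}-1$, and since $c<m$ implies $cr<n$ this is strictly less than $q^n-1\le|\GL(n/d,q^d)|$, which rules out the isomorphism of linear automorphism groups forced by Theorem \ref{th_8}. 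If you want to stay with your Singer-cycle framing, you must first convert it into a cardinality (or structural) statement of this kind; as written, the key step is missing.
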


\begin{proof}
In Section \ref{sec_3} we have seen that $\cH_{m,n,t,\mu,s}^{(k)}$ is an MRD $(m,n,q;m-t+1)$-code. Therefore $\cH_{m,n,t,\mu,s}^{(k)}$ has the same parameters as any generalized Gabidulin code $\cG_{(g_0,\ldots,g_{m-1});t}^{(j)}$, with $g_0,\ldots,g_{m-1}\in\Fqn$ linearly independent over $\Fq$. 

By Theorem \ref{th_8} the subgroup $L_{\cG}$ of all linear automorphisms of $\cG_{(g_0,\ldots,g_{m-1});t}^{(j)}$ is isomorphic to 
\[
(\mathbb{F}_{q^{m'}}^\times \times \GL(n/d,q^d)  )\rtimes G,
\]
for some  divisors $m'$ and  $d$ of $n$ and a subgroup $G$ of $\Aut(\Fqd/\Fq)$. Note that $m$ divides $d$. We represent the elements of $L_\cG$ by pairs of type $((a,A);\varphi)$, with $a\in\mathbb{F}_{q^{m'}}^\times$, $A\in\GL(n/d,q^d)$ and $\varphi\in G$. In particular the subgroup $\{(1,A);\id)\}$ of $L_\cG$  is isomorphic to $\GL(n/d,q^d)$. 

By way of contradiction, assume that $\cH_{m,n,t,\mu,s}^{(k)}$ is equivalent to $\cG_{(g_0,\ldots,g_{m-1});t}^{(j)}$, for some  $g_0,\ldots,g_{m-1}\in\Fqn$ linearly independent over $\Fq$. Then, $\Aut(\cH_{m,n,t,\mu,s}^{(k)})$ must be isomorphic to  $\Aut(\cG_{(g_0,\ldots,g_{m-1});t}^{(j)})$. In particular the subgroup $L_\cH$  of all linear automorphisms of  $\cH_{m,n,t,\mu,s}^{(k)}$ must be isomorphic to $L_\cG$. 
\\
Set $r=n/m$. By Theorem \ref{th_7} and Remark \ref{rem_8}, $L_\cH$ is the subgroup of $(S\times T')\rtimes C$ whose elements correspond to pairs $((D_{\ba}^{(k)},D_{\bb}^{(k)}); \ell^i)$,  where $\ba=(a,0,\ldots,0)$, with $a\in\Fqm$, and $\bb=(b_0,0,\ldots,0,b_{m},0,\ldots,0, b_{(r-1)m},0,\ldots,0)$, with $b_{lm}\in\Fqn$ satisfying Eq. (\ref{eq_30}) with $e=0$.
 \\
 For $i=0$ and $\ba=(1,0,\ldots,0)$, the pairs $((I_m,D_{\bb}^{(k)}), \id)$, with $b_{lm}\in\Fqn$ such that 
 \begin{equation}\label{eq_31}
 \mu   b_{lm}^{(q^{sk}-q^t)q^{-lm}} = \mu^{q^{-lm}},
 \end{equation}
 for $0\le l\le r-1$ whenever $b_{lm}$ is non-zero,  form a subgroup $B$ of $L_\cH$ which should be isomorphic to $\GL(n/d,q^d)$.
By raising to the $q^{lm}$-th power both sides of Eq. (\ref{eq_31}), it becomes  
 \begin{equation}\label{eq_32}
 b_{lm}^{q^t(q^{sk-t}-1)}=\mu^{1-q^{lm}}.
 \end{equation}
It is clear that the elements  $b_{lm}\in\Fqn$ that satisfy Eq. (\ref{eq_32}) corresponds to the solutions of 
\begin{equation}\label{eq_33}
X^{q^{sk-t}-1}=\mu^{1-q^{lm}}.
\end{equation}
over $\Fqn$.
If this equation has no solution in $\Fqn$ then $b_{lm}=0$. %Assume that Eq. (\ref{eq_33}) has solutions in $\Fqn$. %We now prove that they form a multiplicative coset of  $\Fqc^\times$ in $\Fqn^\times$, with $c=\gcd(n,sk-t)$. 
\\
Set $\Fqr^*=\Fqr\setminus\{0\}$. Let $x,y\in\Fqn^*$ be solutions of Eq. (\ref{eq_33}). Then $(x/y)^{q^{sk-t}-1}=1$, or equivalently $x/y\in\Fqc^*$, with $c=\gcd(n,sk-t)$. Thus the solutions of Eq. (\ref{eq_33}) over $\Fqn$ are exactly the elements in $\{\lambda x:x \in\Fqc^*\}$ where $\lambda\in\Fqn$ is a fixed solution of (\ref{eq_33}). Therefore the number of solutions of (\ref{eq_33}) is either 0 or $q^c-1$. In any case this number is strictly less than $q^m$. It follows that 
\[
|B|\le q^{cr}-1<q^{n}-1=|\GL(1,q^n)|\leq |\GL(n/d,q^d)|.
\]
\\
From the above inequality it follows that the subgroup $B$ is not isomorphic to $\GL(n/d,q^d)$. This contradicts the assumption that $\cH_{m,n,t,\mu,s}^{(k)}$ is equivalent to $\cG_{(g_0,\ldots,g_{m-1});t}^{(j)}$. The result then follows.
\end{proof}
\begin{corollary}
Let  $m>1$ be any divisor of $n$ and $\mu,\nu\in\Fqn$ such that $\N_{q^n/q}(\mu)\neq (-1)^{nt}\neq \N_{q^n/q}(\nu)$. For any given $t\in\{1,\ldots,m-2\}$ and $s\not\equiv 0,\pm 1,\pm 2\pmod m$
the punctured code $\cH_{m,n,t,\mu,s}^{(k)}$ is equivalent to $\cH_{m,n,t,\nu,u}^{(k)}$ if and only if there exist $j\in\{0,\ldots, r-1\}$, an integer $i$, a non-zero element  $a\in\Fqm$ and $b_{hm}\in\Fqn$, $h=0,\ldots, r-1$, such that $a\mu^{p^eq^{(r-h)m+i}} b_{hm}^{q^{(r-h)m+t}} =  a^{q^u} \nu b_{(h-j)m}^{q^{(j-h)m+u}}$, for $0\le h\le r-1$ (with indices of $b$ considered modulo $n$) and the $q^k$-circulant matrix $D_{\bb}$ defined by $\bb=(b_0,\underbrace{0,\ldots,0}_{m-1\mathrm{ \ times}},b_{m},\underbrace{0,\ldots,0}_{m-1\mathrm{ \ times}}, b_{(r-1)m},\underbrace{0,\ldots,0}_{m-1\mathrm{ \ times}})$ is non-singular.
\end{corollary}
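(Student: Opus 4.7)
The plan is to run the same strategy as in Theorem \ref{th_7}, but now comparing a generic element of $\Aut(\Omega_{m,n})$ against the target code $\cH_{m,n,t,\nu,u}^{(k)}$ instead of against $\cH_{m,n,t,\mu,s}^{(k)}$ itself. Concretely, let $\varphi\in\Aut(\Omega_{m,n})$ be an equivalence from $\cH_{m,n,t,\mu,s}^{(k)}$ to $\cH_{m,n,t,\nu,u}^{(k)}$. The first step is to exploit the uniqueness statement of Theorem \ref{th_4}: the subspace $\bigoplus_{j=1}^{t-1}\Omega_j^{(k)}$ is the unique subspace of either code which is equivalent to $\Phi_{m,n,t-1}^{(k)}$, so $\varphi$ must send this subspace (viewed inside the domain) to the same subspace (viewed inside the codomain). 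Hence the restriction of $\varphi$ is an automorphism of $\bigoplus_{j=1}^{t-1}\Omega_j^{(k)}$, and by Theorem \ref{th_2} (applied to the Gabidulin code $\Phi_{m,n,t-1}^{(k)}$ shifted once via $\ell$) one obtains that, up to Frobenius and the generator $\ell$ of $C$, $\varphi$ has the form $((D_{\ba}^{(k)},D_{\bb}^{(k)});\ell^{i};\phi^{e})$, with $\ba=(a,0,\ldots,0)$ and $\bb$ of the $T'$-shape $(b_0,0,\ldots,0,b_m,0,\ldots,0,b_{(r-1)m},0,\ldots,0)$.

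The second step is to compute the action of such a $\varphi$ on a generic element $f_{\alpha,0}^{(k)}+f_{\mu\alpha^{q^{sk}},t}^{(k)}$ of $\Gamma_{m,n,t,\mu,s}^{(k)}$. This is exactly the expansion performed in the proof of Theorem \ref{th_7}, using Lemma \ref{lem_5} to reduce to the $k=1$ computation as in Remark \ref{rem_5}. The $xx'$-coefficient of $f^{\varphi}$ produces the candidate
\[
\beta = a\sum_{h=0}^{r-1}\alpha^{p^{e}q^{(r-h)m}}\,b_{hm}^{q^{(r-h)m}},
\]
while the $xx'^{q^{t}}$-coefficient equals
\[
a\sum_{h=0}^{r-1}\mu^{p^{e}q^{(r-h)m+i}}\,\alpha^{p^{e}q^{(r-h)m+sk}}\,b_{hm}^{q^{(r-h)m+t}}.
\]
In order for $f^{\varphi}$ to belong to $\Gamma_{m,n,t,\nu,u}^{(k)}$, the second coefficient must equal $\nu\beta^{q^{uk}}$ for every $\alpha\in\Fqn$.

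The third step is to match coefficients. The powers of $\alpha$ appearing in $\nu\beta^{q^{uk}}$ are $\{(r-h')m+uk:h'=0,\ldots,r-1\}$ and those on the other side are $\{(r-h)m+sk:h=0,\ldots,r-1\}$, both taken modulo $n$. Since $\gcd(k,n)=1$, these two multisets coincide as subsets of $\bZ/n\bZ$ if and only if $sk\equiv uk\pmod m$, and then there is a unique $j\in\{0,\ldots,r-1\}$ such that every $h'$ on the left pairs with the index $h=h'+j$ (mod $r$) on the right. Equating the paired coefficients and using $(r-h+j)m\equiv (j-h)m\pmod n$ yields exactly the claimed identity
\[
a\,\mu^{p^{e}q^{(r-h)m+i}}\,b_{hm}^{q^{(r-h)m+t}}=a^{q^{u}}\nu\,b_{(h-j)m}^{q^{(j-h)m+u}},\qquad 0\le h\le r-1,
\]
with the index of $b$ taken modulo $n$. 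The non-singularity of $D_{\bb}^{(k)}$ is required precisely so that the corresponding $\varphi$ is invertible (equivalently $B\in\GL(V')$).

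For the converse (sufficiency), given $j,i,e,a$ and $(b_{hm})$ satisfying the stated identities and with $D_{\bb}^{(k)}$ non-singular, one constructs $\varphi=((D_{\ba}^{(k)},D_{\bb}^{(k)});\ell^{i};\phi^{e})$ and reverses the above calculation to verify that $\varphi$ sends $\Gamma_{m,n,t,\mu,s}^{(k)}$ into $\Gamma_{m,n,t,\nu,u}^{(k)}$; since $\varphi$ also preserves each $\Omega_{j}^{(k)}$ for $1\le j\le t-1$, it maps $\cH_{m,n,t,\mu,s}^{(k)}$ onto $\cH_{m,n,t,\nu,u}^{(k)}$. I expect the main obstacle to be step one, namely pinning down rigorously that no other shape of $\varphi$ can occur: one must carefully combine the uniqueness in Theorem \ref{th_4} with the explicit description in Theorem \ref{th_2} after conjugation by the appropriate shift, and argue that the freedom of having distinct codes on the two sides is absorbed into the index $j$ produced by matching the $\alpha$-exponent sets, rather than enlarging the allowed shape of $(A,B)$.
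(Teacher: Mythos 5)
Your proposal is correct and follows essentially the same route as the paper's own argument: invoke the uniqueness in Theorem~\ref{th_4} to force any equivalence to lie in $\Aut\bigl(\bigoplus_{j=1}^{t-1}\Omega_j^{(k)}\bigr)=\Aut(\Phi_{m,n,t-1}^{(k)})=(S\times T')\rtimes C\rtimes\Aut(\Fq)$, then expand the image of a generic element of $\Gamma_{m,n,t,\mu,s}^{(k)}$ and match the $\alpha$-exponent sets, which simultaneously produces the shift index $j$ and the coefficient identity. The paper leaves the sufficiency direction as ``straightforward calculations'' while you spell it out; that is the only (minor) difference.
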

\begin{proof}
We argue with $k=1$. By Theorem \ref{th_4}, $\cH_{m,n,t,\mu,s}$ and $\cH_{m,n,t,\nu,u}$ contains a unique subspace equivalent to $\Phi_{m,n,t-1}$. Therefore, any isomorphism from $\cH_{m,n,t,\mu,s}$ to $\cH_{m,n,t,\nu,u}$ is in $\Aut(\Phi_{m,n,t-1})$.  By using similar  arguments as in the proof of Theorem \ref{th_7}, we may consider isomorphisms of type  $((A,B),\1;\theta)$. Let $f=f_{\alpha,0}+f_{\mu^{q^i}\alpha^{q^s},t}$ be any bilinear form  in $\Gamma_{m,n,t,\mu^{q^i},s}$. Then, $f^{ \varphi}$ lies in $\Gamma_{m,n,t,\nu,u}$ if and only if
\[
\begin{array}{l}
 a^{q^u}\nu(\alpha^{p^eq^u}b_0^{q^u}+\alpha^{p^eq^{(r-1)m+u}}b_m^{q^{(r-1)m+u}}+\ldots+\alpha^{p^eq^{m+u}}b_{(r-1)m}^{q^{m+u}})=\\[.1in]
a(\mu^{p^eq^i}\alpha^{p^eq^s}b_0^{q^t}+\mu^{p^eq^{(r-1)m+i}}\alpha^{p^eq^{(r-1)m+s}}b_m^{q^{(r-1)m+t}}+\ldots+\mu^{p^eq^{m+i}}\alpha^{p^eq^{m+s}}b_{(r-1)m}^{q^{m+t}})
\end{array}
\]
for all $\alpha\in\Fqn$. This yields $s=jm+u$ for some $j\in\{0,\ldots, r-1\}$ giving
$a\mu^{p^eq^{(r-h)m+i}} b_{hm}^{q^{(r-h)m+t}} =  a^{q^u} \nu b_{(h-j)m}^{q^{(j-h)m+u}}$,
for $0\le h\le r-1$. Straightforward calculations show that the latter conditions  imply that $\cH_{m,n,t,\mu,s}$ is equivalent to $\cH_{m,n,t,\nu,u}$. 
% \qed
\end{proof}
\section*{Acknowledgments}
The first author is very grateful for the opportunity and the hospitality of the Department of Mathematics, Computer Science and Economics at the University of Basilicata, where he spent two weeks during the development of this research.

\noindent The authors would like to thank the referees for their  time and
useful comments that improved the first version of the paper.


\begin{thebibliography}{10}
%
\bibitem{alr} D. Augot, P. Loidreau, G. Robert, Rank metric and Gabidulin codes in characteristic zero, {\em Proceedings  ISIT 2013}, 509--513.
%
%\bibitem{bl} L.B. Beasley, T.J. Laffey, Linear operators on matrices: the invariance of rank-k matrices, {\em Linear Algebra Appl.} {\bf 133} (1990), 175--184.
%
%\bibitem{bcs} E. Ballico, A. Cossidente, A. Siciliano,  External flats to varieties in symmetric product spaces over finite fields, {\em Finite Fields Appl.} {\bf 9} (2003), 300--309.
%
\bibitem{bott} O. Bottema, On the Betti-Mathieu group, {\em Nieuw Arch. Wisk.} {\bf 16 (2)} (1930),   46--50.
%
\bibitem{br} E. Byrne, A. Ravagnani, Covering Radius of Matrix Codes Endowed with the Rank Metric, {\em SIAM J. Discrete Math.}, {\bf 31} (2017), 927--944.
%
\bibitem{car} L. Carlitz, A Note on the Betti-Mathieu group, {\em Portugaliae mathematica} {\bf 22 (3)}  (1963), 121--125.
%
\bibitem{ckww} J. de la Cruz, M. Kiermaier, A. Wassermann, W. Willems, Algebraic structures of MRD Codes, {\em Advances in Mathematics of Communications} {\bf 10} (2016), 499--510.
%
\bibitem{coop} B.N. Cooperstein, External flats to varieties in $\PG(M_{n,n}(\GF(q)))$, {\em Linear Algebra Appl.}  {\bf 267}  (1997), 175--186.
%
\bibitem{cmp} A. Cossidente, G. Marino, F. Pavese, Non-linear maximum rank distance codes, {\em Des. Codes Cryptogr.}  {\bf 79 (3)} (2016), 597-Ð609.
%
\bibitem{bcmp}  B. Csajb\'ok, G. Marino, O. Polverino, Classes and
equivalence of linear sets in $\PG(1,q^n)$,
arxiv:1607.06962
%
\bibitem{bmpz} B. Csajb\'ok, G. Marino, O. Polverino, F. Zullo, Maximum scattered linear sets and MRD-codes, {\em J. Alg. Comb.} {\bf 46} (2017), 517--531.
%
\bibitem{del}  Ph. Delsarte,  Bilinear forms over a finite field, with applications to coding theory, {\em  J. Combin. Theory Ser. A} {\bf 25} (1978),  226--241.
%
%\bibitem{d} P. Dembowski, Finite Geometries. {\em Springer 1968}.
%
\bibitem{dondur} G. Donati, N. Durante, A  generalization of the normal rational curve in $\PG(d,q^n)$  and its  associated non-linear MRD codes, {\em Des. Codes Cryptogr.}, to appear.
%
\bibitem{ds} N. Durante and A. Siciliano, Non-linear maximum rank distance codes in the cyclic model for the field reduction of finite geometries, {\em Electronic Journal of Combinatorics}  {\bf 24} (2017),  Paper 2.33, 18 pp.
%
\bibitem{fkmp} G. Faina, G. Kiss, S.  Marcugini, F. Pambianco,
The cyclic model for $\PG(n,q)$ and a construction of arcs, {\em
European J. Combin.} {\bf 23} (2002),  31--35.
%
\bibitem{gab}  E.M. Gabidulin, Theory of codes with maximum rank distance, {\em Problemy Peredachi Informatsii} {\bf 21} (1985),  3--16.
%
\bibitem{gpt2}  E.M. Gabidulin, A.V. Paramonov, O.V. Tretjakov,
Ideals over a noncommutative ring and their application in cryptology, Advances in cryptology, EUROCRYPT '91, {\em  Lecture Notes in Comput. Sci.} {\bf 547} (1991), 482--489.
%
\bibitem{gpt}  E.M. Gabidulin, A.V. Paramonov, O.V. Tretjakov, Rank errors and rank erasures correction, Proceedings of the 4th International Colloquium on Coding Theory, Dilijan, Armenia, Yerevan, 1992, pp. 11--19.
%
  \bibitem{gan} F.R. Gantmacher, The Theory of Matrices,  Vol. 1,  AMS Chelsea Publishing, Providence, RI, 1998.
%
\bibitem{h1} J.W.P. Hirschfeld, Projective Geometries Over Finite Fields, 2nd edn, Clarendon Press, Oxford, 1998.
%
%\bibitem{ht} J.W.P. Hirschfeld, J.A. Thas, {\em General Galois Geometries},
%Oxford University Press, New York, 1991.
%
 \bibitem{hu} B. Huppert, Endliche Gruppen I, Spriger, Berlin, 1967.
%
\bibitem{ln} R. Lidl, H. Niederreiter,  Finite fields.  Encyclopedia of Mathematics and its Applications, 20. Cambridge University Press, Cambridge, 1997.
 %
\bibitem{lnebe} D. Liebhold, G. Nebe,  Automorphism groups of Gabidulin-like codes,  {\em Arch. Math.} {\bf  107} (2016), 355--366.
%
\bibitem{lun} G. Lunardon, MRD-codes and linear sets, {\em  J. Combin. Theory Ser. A} {\bf  149} (2017), 1--20.
 %
\bibitem{ltz} G. Lunardon, R. Trombetti, Y. Zhou, Generalized twisted Gabidulin codes,  	arXiv:1507.07855.
 %
 \bibitem{kg} A. Kshevetskiy and E. M. Gabidulin, The new construction of rank codes. In {\em Proceedings of the Iternational Symposium on Information Theory (ISIT) 2005}, pp.  2105--2108, Sept 2005.
%
 \bibitem{kk} R. K\"otter, F. Kschischang,
Coding for errors and erasures in random network coding, {\em IEEE Trans. Inform. Theory} {\bf 54} (2008), 3579--3591.
%
\bibitem{mp}   U. Mart\'\i nez-Pe\~{n}as,On the Similarities Between Generalized Rank and Hamming Weights and Their Applications to Network Coding, {\em IEEE Trans. Inform. Th.} {\bf 62} (2016), 4081--4095. 
%
\bibitem{oo} K. Otal and F. \"Ozbudak,
Additive Rank Metric Codes, {\em IEEE Trans. Inorm. Theory} {\bf 63} (2017), 164--168.
%
\bibitem{prns} S. Puchinger, J. Rosenkilde nŽ\'e Nielsen, J. Sheekey,    Further Generalisations of Twisted Gabidulin Codes, arXiv:1703.08093 
%
 \bibitem{rav} A. Ravagnani, Rank-metric codes and their duality theory, {\em Des. Codes  Cryptogr.} {\bf 80} (2016), 197--216.
 %
 %\bibitem{roman}  S. Roman,  Field theory. Graduate Texts in Mathematics, 158. Springer, New York, 2006.
 %
 \bibitem{she} J. Sheekey, A new family of linear maximum rank distance codes, {\em Adv. Math.  Comm.} {\bf 10} (2016), 475--488.
 %
 \bibitem{sk}D. Silva, F.R. Kschischang, Universal Secure Network Coding
via Rank-Metric Codes,  {\em IEEE Trans. Inform. Theory} {\bf 57} (2011),  1124--1135.
 %
 \bibitem{skk} D. Silva, F.R.  Kschischang, R. K\"otter,  A rank-metric approach to error control in random network coding, {\em IEEE Trans. Inform. Theory} {\bf 54} (2008),  3951--3967.
%
% \bibitem{singer} J. Singer, A theorem in finite projective geometry and some applications to number theory, {\em Trans. Amer. Math. Soc.} {\bf 43} (1938), 377--385.
%
\bibitem{tsc} V. Tarokh, N. Seshadri, A.R. Calderbank,  Space-time codes for high data rate wireless communication: performance criterion and code construction, {\em IEEE Trans. Inform. Theor} {\bf 44} (1998),  744--765.
%
\bibitem{tr} A.-L. Trautmann, Isometry and automorphisms of constant dimension codes, {\em Advances in Mathematics of Communications}, {\bf 7} (2013), 147--160.
%
\bibitem{tz} R. Trombetti, Y. Zhou, Nuclei and automorphism group of generalized twisted Gabidulin codes, https://arxiv.org/pdf/1611.04447v1.pdf.
%
\bibitem{wan} Z.-X. Wan,  Geometry of matrices. In memory of Professor L. K. Hua. World Scientific Publishing Co. NJ, 1996.
%
\bibitem{wl} B. Wu, Z. Liu, Linearized polynomials over finite fields revisited, {\em Finite Fields Appl.} {\bf 22} (2013), 79--100.
%
\end{thebibliography}
 \end{document}